\numberwithin{equation}{section}
\newtheorem{thm}{Theorem}[section]
\newtheorem{lem}[thm]{Lemma}
\newtheorem{cor}[thm]{Corollary}
\newtheorem{prop}[thm]{Proposition}
\newtheorem{rem}[thm]{Remark}
\newtheorem{defin}[thm]{Definition}
\newtheorem{defins}[thm]{Definitions}
\newtheorem{nota rem}[thm]{Notation and Remark}
\newtheorem{defen}[thm]{Definition}
\newtheorem{defen rem}[thm]{Definition and Remark}
\def\e{ \begin{enumerate} \it }
\def\ee{\end{enumerate} }
\def\en{\operatorname{end}}
\def\ker{\operatorname{ker}}
\def\Var{\operatorname{Var}}
\def\coker{\operatorname{coker}}
\def\Tor{\operatorname{Tor}}
\def\Att{\operatorname{Att}}
\def\Var{\operatorname{Var}}
\def\pd{\operatorname{pd}}
\def\hei{\operatorname{ht}}
\def\pd{\operatorname{pd}}
\def\Ext{\operatorname{Ext}}
\def\Hom{\operatorname{Hom}}
\def\Spec{\operatorname{Spec}}
\def\Var{\operatorname{Var}}
\def\beg{\operatorname{beg}}
\def\grade{\operatorname{grade}}
\def\cd{\operatorname{cd}}
\def\grade{\operatorname{grade}}
\def\im{\operatorname{im}}
\def\beg{\operatorname{beg}}
\def\en{\operatorname{end}}
\def\Supp{\operatorname{Supp}}
\def\length{\operatorname{length}}
\def\Ass{\operatorname{Ass}}
\def\dim{\operatorname{dim}}
\newcommand{\p}{\mathfrak{p}}
\newcommand{\fb}{\mathfrak{b}}
\newcommand{\fa}{\mathfrak{a}}
\newcommand{\m}{\mathfrak{m}}
\newcommand{\Z}{\mathbb{Z}}
\newcommand{\N}{\mathbb{N}}
\begin{document}
\bibliographystyle{amsplain}

\author{M. Jahangiri}
\address{School of Mathematics, Institute for Research in Fundamental Sciences (IPM), P. O. Box 19395-5749, Tehran, Iran -and- School of Mathematics and Computer Sciences, Damghan University, Damghan, Iran.}
\email{mjahangiri@ipm.ir\\
jahangiri.maryam@gmail.com}

\author{N. Shirmohammadi}
\address{School of Mathematics, Institute for Research in Fundamental Sciences (IPM), P. O. Box 19395-5749, Tehran, Iran -and- Department of Mathematics, University of Tabriz, Tabriz , Iran.}
\email{shirmohammadi@tabrizu.ac.ir }

\author{Sh. Tahamtan}
\address{Islamic Azad University, Borujerd branch, Boroujerd, Iran.}
\email{taham\_sh@yahoo.com }

\thanks{\today }
\subjclass[2000]{13A02, 13E10, 13D45. The first author was in part supported by a grant from IPM (No. 89130115) The second author was in part supported by a grant from IPM (No. 89130057)}

\title[Tameness and Artinianness of graded generalized local cohomology modules]
{Tameness and Artinianness of graded generalized local cohomology modules }

\begin{abstract}
Let $R=\bigoplus_{n\geq 0}R_n$, $\fa\supseteq  \bigoplus_{n> 0}R_n$ and $M$ and $N$ be a standard graded ring, an ideal of $R$ and two finitely generated graded $R$-modules, respectively. This paper studies the homogeneous components of graded generalized local cohomology modules. First of all, we show that for all $i\geq 0$, $H^i_{\fa}(M, N)_n$, the $n$-th graded component of the $i$-th generalized local cohomology module of $M$ and $N$ with respect to $\fa$, vanishes for all $n\gg 0$. Furthermore, some sufficient conditions are proposed to satisfy the equality  $\sup\{\en(H^i_{\fa}(M, N))| i\geq 0\}= \sup\{\en(H^i_{R_+}(M, N))| i\geq 0\}$.

Some sufficient conditions are also proposed for tameness of $H^i_{\fa}(M, N)$ such that $i= f_{\fa}^{R_+}(M, N)$ or $i= \cd_{\fa}(M, N)$, where $f_{\fa}^{R_+}(M, N)$ and $\cd_{\fa}(M, N)$ denote the $R_+$-finiteness dimension and the cohomological dimension of $M$ and $N$ with respect to $\fa$, respectively. We finally consider the Artinian property of some submodules and quotient modules of $H^j_{\fa}(M, N)$, where $j$ is the first or last non-minimax level of $H^i_{\fa}(M, N)$.
\end{abstract}
\maketitle
\section{introduction}
Assume that $R$ is a commutative Noetherian ring with identity and all modules are unitary. Let $\fa$, $\mathfrak{\zeta}(R)$ and $\N_0$ ($\N$) be an ideal of $R$, the category of all $R$-modules and $R$-homomorphisms and the set of non-negative (positive) integers, respectively.

For $i\in\N_0$, the $i$-th generalized local cohomology functor with respect to $\fa$ is a generalization of the $i$-th local cohomology functor with respect to $\fa$, i.e. $H^i_{\fa}(-)= \underset{ \overset {\longrightarrow}{n\in \mathbb{N}}}{\mathrm
 {lim}
 }\Ext_R^i(R/\fa^n, -)$ (\cite{bsh}). It is defined, by Herzog (\cite{h}), as follows:

 \[
 H^{i}_{\fa} (-, -): \mathfrak{\zeta}(R)\times \mathfrak{\zeta}(R)\rightarrow \mathfrak{\zeta}(R)\]
  \[H^{i}_{\fa} (M, N)=\underset{ \overset {\longrightarrow}{n\in \mathbb{N}}}{\mathrm
 {lim}
 }\,\,  \mathrm {Ext}^{i}_{R}(M/ \fa^{n}M, N).\]
 For all $R$-modules $M$ and $N$, $H^{i}_{\fa} (M, N)$ is called the $i$-th generalized local cohomology module of $M$ and $N$ with respect to $\fa$. These functors coincide when $M= R$ and have been studied by many authors (see for instance  \cite{kh}, \cite{s}-\cite{z}). One of the most interesting problems concerning these modules is their vanishing problem. Although, $H^i_{\fa}(N)= 0$ for sufficiently large values of $i$ (\cite[3.3.1]{bsh}), $H^i_{\fa}(M, N)$ can be non-zero for all $i\in \N$.
However, it can be proved that $H^i_{\fa}(M, N)= 0$ for all $i\gg 0$ in some cases. Such as when $M$ and $N$ are finitely generated and $\pd_R(M)< \infty$, where $\pd_R(M)$ denote the projective dimension of $M$ (\cite{y}).

Now assume  $R=\bigoplus_{n\in \N_0}R_n$ is a standard graded ring, $\fa$ is a homogeneous ideal of $R$ and $M$ and $N$ are graded $R$-modules. It is wellknown that $H^i_{\fa}(M, N)$ caries a natural grading. Furthermore, assume that $R_+=\bigoplus_{n\in \N}R_n$ denote the irrelevant ideal of $R$, $M$ and $N$ are finitely generated and $\Z$ denotes the set of integers. Then it is shown in \cite{kh} that the $n$-th graded component of $H^i_{R_+}(M, N)$, i.e. $H^i_{R_+}(M, N)_n$, is a finitely generated $R_0$-module for all $n\in \Z$ and it vanishes for all $n\gg 0$.
 Also, in \cite{z}, it is proved that when $(R_0, \m_0)$ is local and $ \pd_R(M)< \infty$ then $H^i_{R_+}(M, N)= 0$ for all $i> \pd_R(M)+ \dim(N/ \m_0N)=: c$ and that $H^c_{R_+}(M, N)$ is tame. It is a generalization of the obtained result in \cite[4.8(e)]{b}. The tameness of $H^i_{\fa}(M, N)$ is related to the case that $H^i_{\fa}(M, N)_n =0$ for all $n\ll 0$ or
 $H^i_{\fa}(M, N)_n \neq 0$ for all $n\ll 0$.

 Actually, there are lots of interest in the study of tameness property of these modules. Although, Chardin et al. in \cite{cchs} showed the tameness of these modules is not true in a general case, it holds in some cases (see \cite{b}). Therefore, finding other sufficient conditions for tameness of these modules is motivated.
 In this paper, some sufficient conditions are proposed for the tameness of these modules. The stability of the set of their associated prime ideals
 $\Ass_{R_0}(H^i_{\fa}(M, N)_n)$, when $n\rightarrow -\infty$, is then studies.
 By a modification of Singh's example (\cite{bks}), these sets might be non stable when $n\rightarrow -\infty$, but in some cases it holds. The cases consist in $\fa =R_+$ and $(R_0, \m_0)$ is local of dimension $\leq 1$ or $i\leq f_{R_+}(M, N)$, where $f_{R_+}(M, N)$ denotes the first integer $j$ such that $H^j_{R_+}(M, N)$ is not finitely generated (see \cite{kh}).

 Let $\fa_0$ and $\fa:= \fa_0+ R_+$, respectively, be an ideal of $R_0$ and an ideal of $R$ which contains the irrelevant ideal. Also, let $M$ and $N$ be two finitely generated graded $R$-modules. The structure of this paper is organized as follows:

 In Section 2, we first study some general properties of the components of $H^i_{\fa}(M, N)$. In particular, we show that $H^i_{\fa}(M, N)_n= 0$ for all $n\gg 0$ and they are finitely generated in some cases (\ref{gen}). We will then show a nice property of these modules. The property states that when $(R_0, \m_0)$ is local and $\pd_R(M)< \infty$, then $$\sup\{\en(H^i_{\fa}(M, N))| i\in \N_0\}= \sup\{\en(H^i_{R_+}(M, N))| i\in \N_0\},$$
 in the case where $\fa_0$ is principal or $R$ is Cohen-Macaulay (\ref{cm} and \ref{1}).

 In Section 3, we give an upper bound for
 $$\cd_{\fa}(M, N):= \sup \{i\in \N_0| H^i_{\fa}(M, N)\neq 0\},$$
 when $\pd_R(M)< \infty$. More precisely, we show that
 in this case
 $$\cd_{\fa}(M, N)\leq \pd_R(M)+ \max\{\cd_{\fa}(\Ext^{i}_{R}(M, N))| i\in \N_0\}:= c,$$
 where for any $R$-module $X$, $\cd_{\fa}(X):= \sup \{i\in \N_0| H^i_{\fa}(X)\neq 0\}$ (\ref{cd}), and that $H^c_{\fa}(M, N)$ is tame in some cases (\ref{tame}).

We will also show that for each $i\leq f_{\fa}^{R_+}(M, N)$ there exists a finite subset $X$ of $\Spec(R_0)$ such that $\Ass_{R_0}(H^i_{\fa}(M, N)_n)= X$ for all $n\ll 0$, where
 $$f_{\fa}^{R_+}(M, N):= \inf \{i\in \N_0| R_+\nsubseteq \sqrt{0:_R H^i_{\fa}(M, N)}\},$$
 is the $R_+$-finiteness dimension of $M$ and $N$ with respect to $\fa$ (\ref{ass}). It implies that $H^i_{\fa}(M, N)$ is tame for all $i\leq f_{\fa}^{R_+}(M, N)$.

In the last Section, we study the tameness and Artinianness of
 $H^i_{\fa}(M, N)$ in the first and last non-minimax levels.

Throughout the paper, $R= \bigoplus_{n\in \N_0}R_n$ is a standard graded Noetherian ring, $R_+= \bigoplus_{n\in \N}R_n$ is the irrelevant ideal of $R$, $\fa_0$ is an ideal of $R_0$ and $\fa:= \fa_0+ R_+$. Also, $M$ and $N$ are two finitely generated graded $R$-modules.

\section{On the behavior of $H^i_{\fa}(M, N)_n$ for $n\gg 0$}

It is wellknown that $H^i_{\fa}(M, N)$ is a graded $R$-module and $H^i_{R_+}(M, N)_n$ is a finitely generated $R_0$-module for all $n\in \Z$ and it vanishes for all $n\gg 0$ (see \cite[3.2]{kh}).

In this section we are going to study some general properties of graded components of generalized local cohomology module $H^i_{\fa}(M, N)$ with respect to an ideal $\fa$ containing the irrelevant ideal.

\begin{rem}\label{seq}

(\textrm{i}) Let $L$ and $K$ be two graded $R$-modules, $\fb$ be a homogenous ideal of $R$ and $x$ a homogenous element of $R$. Then, in view of \cite[3.1]{dh},  there exists a long exact sequence
  $$\cdots \longrightarrow H^i_{\fb+xR}(L, K)\longrightarrow H^i_{\fb}(L, K)\longrightarrow H^i_{\fb}(L, K)_x\longrightarrow H^{i+1}_{\fb+xR}(L, K)\longrightarrow \cdots $$
  of graded $R$-modules.

(\textrm{ii})
A sequence
$x_{1},...,x_{t}$ of homogeneous elements of $R_+$ is said to be a
homogeneous $\fa$-filter regular sequence on $M$  if for all $ i=
1,...,t$, $x_{i} \notin \mathfrak{p}$ for all $\mathfrak{p}\in
\Ass_{R}(M/ (x_{1},...,x_{i-1})M)\setminus V(\fa)$,
where
$V(\fa)$ is the set of prime ideals of $R$ containing $\fa$. It is straightforward to see that if $\Supp_{ R}(
 M/ R_{+}M)\nsubseteq V(\mathfrak{a})$, then
all maximal homogeneous $\fa$-filter regular sequences on
$M$ in $R_+$ have the same length. We denote, in this case, the common
length of all maximal homogeneous $\fa$-filter regular
sequences on $M$ in $R_+$ by $f-\grade (\mathfrak{a}, R_{+ }, M )$.
Also, we set $f-\grade (\mathfrak{a}, R_{+},
 M) = \infty$ whenever $\Supp_{ R}(
M/ R_{+}M)\subseteq V (\mathfrak{a} )$.

(\textrm{iii}) Let $X$ and $Y$ be two $R$-modules and $E_Y^{\bullet}$ be an injective resolution of $Y$. Then, in view of \cite{s}, one has
$$H^i_{\fa}(X, Y)\cong H^i(\Gamma_{\fa}(\Hom_R(X, E_Y^{\bullet})))\cong H^i(\Hom_R(X, \Gamma_{\fa}(E_Y^{\bullet}))).$$
Therefore, if $Y$ is an $\fa$-torsion $R$-module then, using \cite[2.1.6]{bsh}, $H^i_{\fa}(X, Y)\cong \Ext^i_R(X, Y)$.

(\textrm{iv}) If $0\to X\to Y\to Z\to 0$ is an exact sequence of graded $R$-modules and $R$-homomorphisms, then for any graded $R$-module $K$ there are long exact sequences of graded generalized local cohomology modules
$$\cdots \to H^i_{\fa}(K, X)\to H^i_{\fa}(K, Y)\to H^i_{\fa}(K, Z)\to H^{i+1}_{\fa}(K, X)\to \cdots$$
and
$$\cdots \to H^i_{\fa}(Z, K)\to H^i_{\fa}(Y, K)\to H^i_{\fa}(X, K)\to H^{i+1}_{\fa}(Z, K)\to \cdots.$$
\end{rem}

\begin{defen rem}\label{g}

 Let $(R_0, \m_0)$ be local and $\fb$ be a homogenous ideal of $R$. Define
  $$g_{\fb}(M, N):= \inf \{i\in \N_0| \forall j< i, \ \ \length_{R_0}(H^j_{\fb}(M, N)_n)< \infty \ \ \ \forall n\ll 0\},$$
  as the \textit{cohomological finite length dimension of $M$ and $N$ with respect to $\fa$}.

  If $\fa_0\subseteq \fb_0\subseteq \m_0$ be two ideals of $R_0$, then using \ref{seq}(i),
 it is straightforward to see that $g_{\fa_0+ R_+}(M, N)\leq g_{\fb_0+ R_+}(M, N)$.
\end{defen rem}

In the following theorem we study vanishing and Noetherian property of graded components of $H^i_{\fa}(M, N)$. To this end, we use the concept of the \textit{end} and \textit{beginning} (beg(X)) of a graded $R$-module $X= \oplus_{n\in \Z}X_n$, which are defined by
\[\en(X):= \sup\{n\in \Z| X_n\neq 0\} \ \ \textrm{and} \ \ \beg(X):= \inf\{n\in \Z| X_n\neq 0\}.\]
(Note that $\en(X)$ could be $\infty$, and that the supremum of the empty set of integers is to be taken as $-\infty$; similar comments apply to $\beg(X)$.)

\begin{thm}\label{gen}
The following statements hold.
\\
(\textrm{i}) For all $i\in \N_0$, $H^i_{\fa}(M, N)_n =0$ for all $n\geq \sup\{\en(H^j_{\fa}(N))| j\in \N_0\}- \beg(M)$;
\\
(\textrm{ii})  let $\pd_R(M)< \infty$. Then $H^i_{\fa}(M, N)_n $ is a finitely generated graded $R_0$-module for all $n\in \Z$ and all $i\leq f-\grade(\fa, R_+, N)$;
\\
(\textrm{iii}) let $(R_0, \m_0)$ be local. Then $H^i_{\fa}(M, N)_n $ is a finitely generated $R_0$-module for all $n\ll 0$ and all $i\leq g_{R_+}(M, N)$.

\end{thm}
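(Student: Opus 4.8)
The plan is to treat the three parts in order, since (i) is the essential vanishing statement and (ii), (iii) are finiteness statements that can be handled by induction on $i$ using the exact sequences collected in \ref{seq}.

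For part (i), I would compute $H^i_{\fa}(M,N)$ from the characterization in \ref{seq}(iii), namely $H^i_{\fa}(M,N)\cong H^i(\Hom_R(M,\Gamma_{\fa}(E_N^{\bullet})))$, where $E_N^{\bullet}$ is a graded injective resolution of $N$. Each $\Gamma_{\fa}(E_N^j)$ is an $\fa$-torsion graded module whose cohomology is $H^j_{\fa}(N)$, and since $\fa\supseteq R_+$ every $H^j_{\fa}(N)$ vanishes in high degrees by the cited result of Katzman--Khashyarmanesh type (it is finitely generated in each degree and $H^j_{\fa}(N)_n=0$ for $n\gg 0$; in fact $\en(H^j_{\fa}(N))<\infty$). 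The key point is the elementary grading fact that if $X$ is a graded $R$-module generated in degrees $\geq \beg(M)$-worth of shifts and $Y$ is a graded module with $Y_n=0$ for $n>t$, then $\Hom_R(X,Y)_n=0$ for $n>t-\beg(M)$: a degree-$n$ homomorphism sends a generator sitting in degree $\geq\beg(M)$ into degree $\geq n+\beg(M)$, which must exceed $t$ and hence be zero. Applying this degreewise to the complex $\Hom_R(M,\Gamma_{\fa}(E_N^{\bullet}))$ with $t=\sup_j\en(H^j_{\fa}(N))$ — after replacing $\Gamma_{\fa}(E_N^j)$ by its (still $\fa$-torsion) truncation, or more carefully by noting the relevant cohomology already vanishes above $t$ — forces $H^i_{\fa}(M,N)_n=0$ for $n\geq t-\beg(M)$. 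The one subtlety is that the individual injectives $\Gamma_{\fa}(E_N^j)$ need not vanish in high degrees even though their cohomology does; I would get around this by working with a minimal graded injective resolution and tracking that the shifts of the indecomposable summands $E(R/\p)$ appearing are controlled, or alternatively by using \ref{seq}(i)/(iv) to reduce to the local cohomology case via an induction on the number of generators of $\fa$ over $R_+$, citing \cite{kh} for the base case $\fa=R_+$.

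For part (ii), I would induct on $i$. Since $\pd_R(M)<\infty$, taking a finite graded free resolution $F_{\bullet}\to M$ shows $\Ext^i_R(M,N)$ is a finitely generated graded $R$-module for every $i$. For $i=0$, $f\text{-}\grade(\fa,R_+,N)>0$ means $\Gamma_{R_+}(N)$-related degrees behave well and $H^0_{\fa}(M,N)=\Hom_R(M,\Gamma_{\fa}(N))$; one checks directly it is finitely generated in each degree. For the inductive step, pick a homogeneous $\fa$-filter regular element $x\in R_+$ on $N$ — possible because $i\leq f\text{-}\grade(\fa,R_+,N)$ and this grade is the length of a maximal such sequence — and use the long exact sequence arising from $0\to N/\Gamma_{xR}(N)\xrightarrow{x} (N/\Gamma_{xR}(N))(|x|)\to \cdots$, or more efficiently the sequence of \ref{seq}(i) relating $H^i_{\fa}(M,N)$, $H^i_{\fa+xR}(M,N)$, and $H^i_{\fa}(M,N)_x$. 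Since $x$ cuts the filter-grade down by one, the inductive hypothesis applies to $N/xN$ (with respect to $\fa+xR$ or after the standard reductions), and combined with the Artinian/finiteness of the localized piece one concludes each graded component of $H^i_{\fa}(M,N)$ is finitely generated over $R_0$.

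For part (iii), the argument is parallel but uses the definition of $g_{R_+}(M,N)$ directly: for $j<g_{R_+}(M,N)$ one has $\length_{R_0}(H^j_{R_+}(M,N)_n)<\infty$ for $n\ll 0$, and combined with part (i) (which gives vanishing for $n\gg 0$) and the fact from \cite{kh} that each $H^j_{R_+}(M,N)_n$ is already finitely generated over $R_0$, one upgrades to finite generation in \emph{all} degrees for $j\le g_{R_+}(M,N)$; the case $j=g_{R_+}(M,N)$ itself follows because the middle term of a short exact sequence built from the previous cohomologies (or from the spectral sequence $\Ext^p_{R_0}(\cdot)\Rightarrow$) is squeezed between finitely generated modules. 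I expect the main obstacle to be part (i): precisely, controlling the high-degree behavior of the graded injective resolution well enough to transfer the known vanishing of $\en(H^j_{\fa}(N))$ to the $\Hom(M,-)$ complex. The cleanest route is the induction on the minimal number of homogeneous elements of $R_+$ needed to generate $\fa$ modulo $R_+$, using \ref{seq}(i) at each step and the known statement for $\fa=R_+$ from \cite{kh} as the base case, so that one never has to analyze the injective resolution directly.
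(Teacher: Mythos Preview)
Your outline for part (i) via a minimal graded injective resolution is precisely the paper's route: it invokes \cite[2.4]{sh} to identify $\sup_j\en(\Gamma_{\fa}({}^*E_R^j(N)))$ with $\sup_j\en(H^j_{\fa}(N))$, which is exactly the control on the shifts of the indecomposable summands $^*E_R(R/\p)(-t)$, $\p\in V(\fa)$, that you say you would track. Where you go wrong is in calling the induction on generators of $\fa_0$ the ``cleanest route'': that induction, with base case \cite{kh}, only yields vanishing for \emph{some} $n\gg 0$ and cannot produce the sharp bound $\sup_j\en(H^j_{\fa}(N))-\beg(M)$ that the theorem actually asserts. So for (i) keep your first idea and drop the second.

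For part (ii) the paper takes a different and shorter path than yours: it inducts on $\pd_R(M)$ rather than on $i$. The base case $\pd_R(M)=0$ makes $H^i_{\fa}(M,N)$ a direct summand of a finite power of $H^i_{\fa}(N)$, and the ordinary-local-cohomology statement is exactly \cite[1.7]{jz}; the inductive step uses the long exact sequence of \ref{seq}(iv) in the first variable coming from $0\to M'\to R^n\to M\to 0$. Your filter-regular-element induction on $i$ could in principle be made to work, but note that your appeal to the sequence of \ref{seq}(i) is vacuous here since $x\in R_+\subseteq\fa$ forces $\fa+xR=\fa$; you would have to run the multiplication-by-$x$ sequence on $N$ instead, which requires more bookkeeping than the paper's argument.

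Part (iii) has a genuine gap. Everything you write concerns $H^j_{R_+}(M,N)_n$, whose finite generation for all $n$ is already in \cite{kh}; the statement to be proved is about $H^i_{\fa}(M,N)_n$ with $\fa=\fa_0+R_+$, and you never explain how to pass from $R_+$ to $\fa$. The paper does this by induction on the number of generators of $\fa_0=(x_1,\dots,x_t)\subseteq\m_0$, using the exact sequence of \ref{seq}(i)
\[
(H^{i-1}_{\fb}(M,N)_n)_{x_t}\longrightarrow H^i_{\fb+x_tR}(M,N)_n\longrightarrow H^i_{\fb}(M,N)_n
\]
with $\fb=(x_1,\dots,x_{t-1})+R_+$. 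The key point you are missing is that for $i\le g_{R_+}(M,N)$ one has $i-1<g_{R_+}(M,N)\le g_{\fb}(M,N)$ (by \ref{g}), so $H^{i-1}_{\fb}(M,N)_n$ has finite length for $n\ll 0$ and hence its localization at $x_t\in\m_0$ vanishes; the middle term then injects into a module already known to be finitely generated by the inductive hypothesis.
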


\begin{proof}

 (\textrm{i}) Let $i\in \N_0$. Then, using \cite[2.4]{sh}, we have

 \begin{align}
 \en(H^i_{\fa}(M, N))&= \en(H^i(\Hom_R(M, \Gamma_{\fa}(^*E_R(N))))) \nonumber \\
 &\leq \sup\{\en(\Hom_R(M, \Gamma_{\fa}(^*E_R^j(N))))| j\in \N_0\} \nonumber \\
 &\leq \sup\{\en(\Gamma_{\fa}(^*E_R^j(N)))| j\in \N_0\}- \beg(M) \nonumber \\
 &= \sup\{t\in \Z| \exists \p\in V(\fa), ^*E_R(R/\p)(-t)\leq ^*E_R^j(N) \ \mbox{ for some} \ j\in \N_0\}- \beg(M) \nonumber \\
 &= \sup\{\en(H^j_{\fa}(N))| j\in \N_0\}- \beg(M), \nonumber
 \end{align}

 where, for each $j\in \N_0$, $^*E_R^j(N)$ denote the $j$-th term in a minimal graded injective resolution $^*E_R(N)$ of $N$.

(ii) One can prove the claim using induction on $\pd_R(M)$ and \ref{seq}(iv) in conjunction with \cite[1.7]{jz}.

 (iii) Assume that $\fa_0=(x_1,\cdots, x_n)\subseteq \m_0$. We use induction on $n$. Let $n=1$ and $i\leq g_{R_+}(M, N)$. Since $H^{i- 1}_{R_+}(M, N)_n$ is of finite length and $x_1\in \m_0$, so $(H^{i-1}_{R_+}(M, N)_n)_{x_1}=0$ for all $n\ll 0$. Now, in this case the result follows using the exact sequence
  $$ (H^{i-1}_{R_+}(M, N)_n)_x\longrightarrow H^i_{\fa}(M, N)_n\longrightarrow H^i_{R_+}(M, N)_n$$
  and \cite[3.2]{kh}. For the case $n> 0$, one can use the same argument as used in the case $n= 1$ in conjunction with \ref{g}.
\end{proof}

The above theorem shows that $H^i_{\fa}(M, N)_n = 0$ for sufficiently large values of $n$. But, according to \cite[1.3]{jz}, these modules can be non-Noetherian in general.

In \cite[3.2]{sh}, \cite[2.3]{Hy} and \cite[1.6]{jz} it is shown that in the case where $(R_0, \m_0)$ is a local ring and $\fa_0\subseteq \m_0$, we have $$\sup \{\en(H^i_{R_+}(N))| i\geq 0 \}= \sup \{\en(H^i_{\fa_0+ R_+}(N))| i\geq 0 \}.$$

 Now, it is natural to ask if
$$\sup \{\en(H^i_{R_+}(M, N))| i\geq 0 \}= \sup \{\en(H^i_{\fa}(M, N))| i\geq 0 \}.$$

(Note that if $\pd_R(M)< \infty$ then, in view of \ref{gen}(i) and \cite[2.5]{y}, $$\sup \{\en(H^i_{\fa_0+ R_+}(M, N))| i\geq 0 \}<\infty.)$$
In the rest of this section we are going to answer to this question in some special cases. To do this, it will be convenient to have available a notation. Define $$a^*_{\fa}(M, N):= \sup \{\en(H^i_{\fa}(M, N))| i\geq 0 \}.$$In the rest of this section, we assume that $(R_0, \m_0)$ is a local ring and $\fa_0\subseteq \m_0$. We use $\m:= \m_0+ R_+$ to denote the unique graded maximal ideal of $R$.

\begin{lem}\label{1}
Let $\fa_0=xR_0\subseteq \m_0$ be a principal ideal. Then $a^*_{\fa}(M, N)= a^*_{R_+}(M, N)$.
\end{lem}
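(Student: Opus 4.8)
The plan is to exploit the long exact sequence of \ref{seq}(i) with $\fb = R_+$ and the principal element $x \in \m_0$, namely
\[
\cdots \to H^i_{\fa}(M,N) \to H^i_{R_+}(M,N) \to H^i_{R_+}(M,N)_x \to H^{i+1}_{\fa}(M,N) \to \cdots,
\]
which is a sequence of graded $R$-modules (here $\fa = xR_0 + R_+$). The inequality $a^*_{R_+}(M,N) \le a^*_{\fa}(M,N)$ will follow from the surjections $H^i_{R_+}(M,N)_x \to H^{i+1}_{\fa}(M,N)$ together with the right exactness of localization: if $n > a^*_{\fa}(M,N)$ is large, then $H^{i+1}_{\fa}(M,N)_n = 0$ for all $i$, and I want to deduce $H^i_{R_+}(M,N)_n = 0$. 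The key point is that each $H^i_{R_+}(M,N)_n$ is a finitely generated $R_0$-module (\cite[3.2]{kh}), so it is $x$-torsion-free in high degrees is not automatic — instead I argue via degrees: the map $H^i_{R_+}(M,N) \to H^i_{R_+}(M,N)_x$ has kernel the $x$-torsion submodule, and since $x$ has degree $0$, localizing at $x$ does not change which graded pieces can be nonzero beyond controlling torsion; more precisely, from the exact sequence the cokernel of $H^i_{\fa}(M,N) \to H^i_{R_+}(M,N)$ injects into $H^i_{R_+}(M,N)_x$, and combining with $\en(H^i_{R_+}(M,N)_x) \le \en(H^i_{R_+}(M,N))$ (localization at a degree-zero element cannot increase the end) one relates the ends on both sides.

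For the reverse inequality $a^*_{\fa}(M,N) \le a^*_{R_+}(M,N)$, I would use the other half of the exact sequence: $H^i_{\fa}(M,N)$ receives a surjection (up to the image of $H^{i-1}_{R_+}(M,N)_x$) and sits in
\[
H^{i-1}_{R_+}(M,N)_x \to H^i_{\fa}(M,N) \to H^i_{R_+}(M,N).
\]
Thus $\en(H^i_{\fa}(M,N)) \le \max\{\en(H^{i-1}_{R_+}(M,N)_x),\, \en(H^i_{R_+}(M,N))\}$. Since $x \in \m_0$ is a degree-zero element, inverting it can only lower or preserve the top nonzero degree, so $\en(H^{i-1}_{R_+}(M,N)_x) \le \en(H^{i-1}_{R_+}(M,N)) \le a^*_{R_+}(M,N)$, and likewise $\en(H^i_{R_+}(M,N)) \le a^*_{R_+}(M,N)$. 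Taking the supremum over $i \ge 0$ gives $a^*_{\fa}(M,N) \le a^*_{R_+}(M,N)$, and together with the first part this yields equality.

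The main obstacle I anticipate is justifying cleanly that localizing a graded module at a homogeneous element of degree zero does not raise its end; this needs care because $M_x = M \otimes_R R_x$ is graded with $(M_x)_n$ built from fractions $m/x^k$ with $\deg m = n$, so indeed every nonzero graded piece of $M_x$ in degree $n$ comes from a nonzero piece of $M$ in degree $n$ (modulo the kernel of $m \mapsto m/1$), giving $\en(M_x) \le \en(M)$; writing this out rigorously for all the terms and making sure the connecting maps in \ref{seq}(i) are genuinely degree-preserving (which they are, by the construction in \cite[3.1]{dh}) is the bookkeeping that needs to be done. Once that degree-monotonicity of localization is in hand, both inequalities drop out of the long exact sequence essentially formally, so the lemma reduces to this one structural observation about graded localization plus the finiteness input of \cite[3.2]{kh}.
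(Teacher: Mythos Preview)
Your approach uses the same long exact sequence as the paper, and your argument for the inequality $a^*_{\fa}(M,N) \le a^*_{R_+}(M,N)$ via $\en(L_x) \le \en(L)$ for a degree-zero element $x$ is correct. However, there is a genuine gap in your argument for the reverse inequality $a^*_{R_+}(M,N) \le a^*_{\fa}(M,N)$.

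Here is the issue. Fix $n > a^*_{\fa}(M,N)$, so that $H^j_{\fa}(M,N)_n = 0$ for all $j$. The exact sequence in degree $n$ then yields that the natural localization map $H^i_{R_+}(M,N)_n \to (H^i_{R_+}(M,N)_n)_x$ is an isomorphism. Your observation that $\en(L_x) \le \en(L)$ is true but useless at this point: it only says that if $H^i_{R_+}(M,N)_n = 0$ then its localization is also zero, which is the trivial direction. You need to conclude that $H^i_{R_+}(M,N)_n$ itself vanishes, and end-monotonicity of localization gives no information about that. Your remark about the cokernel of $H^i_{\fa}(M,N)\to H^i_{R_+}(M,N)$ injecting into $H^i_{R_+}(M,N)_x$ is circular for the same reason: it bounds $\en(\coker)$ by $\en(H^i_{R_+}(M,N))$, which is already obvious since the cokernel is a quotient.

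The missing ingredient is Nakayama's lemma, and this is exactly where the hypotheses $(R_0,\m_0)$ local and $x\in\m_0$ enter. Set $L := H^i_{R_+}(M,N)_n$; this is a finitely generated $R_0$-module by \cite[3.2]{kh}. Since the map $L \to L_x$ is an isomorphism, multiplication by $x$ is bijective on $L$, so $L = xL \subseteq \m_0 L$, and Nakayama gives $L = 0$. This is precisely how the paper argues: it establishes directly that, for each fixed $n$, one has $H^i_{R_+}(M,N)_n = 0$ for all $i$ if and only if $H^i_{\fa}(M,N)_n = 0$ for all $i$, with Nakayama supplying the nontrivial implication. Without this step your proof of $a^*_{R_+}(M,N) \le a^*_{\fa}(M,N)$ does not close.
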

\begin{proof}
In view of \ref{seq}(i), for all $n\in \Z$ there exists a long exact sequence
$$\cdots\to (H^{i-1}_{R_+}(M, N)_n)_x\to H^i_{\fa}(M, N)_n\to H^i_{R_+}(M, N)_n\to (H^{i}_{R_+}(M, N)_n)_x\to H^{i+ 1}_{\fa}(M, N)_n \to \cdots$$
of generalized local cohomology modules. As $(R_0, \m_0)$ is local and $H^i_{R_+}(M, N)_n$ is a finitely generated $R_0$-module for all $n\in \Z$, the above exact sequence implies that for all $i\in \N_0$, $H^i_{R_+}(M, N)_n= 0$ if and only if $H^i_{\fa}(M, N)_n= 0$ for all $i\in \N_0$, where $n\in \Z$. This proves the claim.
\end{proof}

\begin{thm}
Assume that for all $i\in \N_0$, all ideal $\fb_0\subseteq \fa_0$ of $R_0$ and all $n\in \Z$, $\bigcap_{t\in \N}\fa_0^t H^i_{\fb_0+ R_+}(M, N)_n= 0$. Then, $a^*_{\fa}(M, N)= a^*_{R_+}(M, N)$.
\end{thm}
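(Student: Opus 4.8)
The plan is to reduce to the principal-ideal case already handled in Lemma~\ref{1}, using induction on the number of generators of $\fa_0$ together with the new hypothesis to control the ``denominator'' modules appearing in the Mayer--Vietoris-type sequence of \ref{seq}(i). Write $\fa_0=(x_1,\dots,x_n)R_0\subseteq\m_0$ and set $\fb_0:=(x_1,\dots,x_{n-1})R_0$, so that $\fa_0=\fb_0+x_nR_0$ and $\fb_0\subseteq\fa_0$. By induction we may assume $a^*_{\fb_0+R_+}(M,N)=a^*_{R_+}(M,N)$, and it suffices to prove $a^*_{\fa_0+R_+}(M,N)=a^*_{\fb_0+R_+}(M,N)$; the base case $n=1$ is exactly Lemma~\ref{1} (with $\fb_0=0$, noting $H^i_{R_+}(M,N)$ is $H^i_{0+R_+}(M,N)$).

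For the inductive step I would argue exactly as in the proof of Lemma~\ref{1}, but with $R_+$ replaced by $\fb_0+R_+$ and $x$ replaced by $x_n$: by \ref{seq}(i) applied with $\fb=\fb_0+R_+$ and $x=x_n$, for every $n\in\Z$ there is a long exact sequence
$$
\cdots\to (H^{i-1}_{\fb_0+R_+}(M,N)_n)_{x_n}\to H^i_{\fa}(M,N)_n\to H^i_{\fb_0+R_+}(M,N)_n\to (H^{i}_{\fb_0+R_+}(M,N)_n)_{x_n}\to\cdots.
$$
The key point is that the localization map $H^{i}_{\fb_0+R_+}(M,N)_n\to (H^{i}_{\fb_0+R_+}(M,N)_n)_{x_n}$ is \emph{injective} on the kernel issue one cares about: since $x_n\in\m_0$, an element killed in the localization is killed by some power of $x_n$, so the kernel of this map lies in $\bigcup_t(0:_{H^i_{\fb_0+R_+}(M,N)_n}x_n^t)$; dually, what matters for the $a^*$-comparison is that $\bigcap_t\fa_0^tH^i_{\fb_0+R_+}(M,N)_n=0$ forces $(H^i_{\fb_0+R_+}(M,N)_n)_{x_n}=0$ as soon as $H^i_{\fb_0+R_+}(M,N)_n$ is $\m_0$-torsion, and more to the point it guarantees that the localization map is injective whenever we need it. Concretely, the hypothesis $\bigcap_{t}\fa_0^tH^i_{\fb_0+R_+}(M,N)_n=0$ (applicable since $\fb_0\subseteq\fa_0$) implies that multiplication by $x_n$, hence by any power of $x_n$, has zero ``stable kernel'' on $H^i_{\fb_0+R_+}(M,N)_n$ intersected with every $\fa_0^t$-power, which is precisely what is needed to conclude that $H^i_{\fb_0+R_+}(M,N)_n=0$ if and only if $H^i_{\fa}(M,N)_n=0$. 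Running this equivalence over all $i$ and $n$ gives $\en(H^i_{\fa}(M,N))=\en(H^i_{\fb_0+R_+}(M,N))$ for each $i$ after taking suprema appropriately, hence $a^*_{\fa}(M,N)=a^*_{\fb_0+R_+}(M,N)$, and the induction closes.

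The main obstacle I anticipate is making the ``kernel of localization vanishes'' step fully rigorous from the stated intersection hypothesis: a priori $\bigcap_t\fa_0^tH^i_{\fb_0+R_+}(M,N)_n=0$ controls $\fa_0$-adic separatedness of the whole module, while the exact sequence feeds us the $x_n$-torsion submodule $\Gamma_{x_nR}(H^i_{\fb_0+R_+}(M,N))_n$, and one must verify that this torsion submodule is contained in $\bigcap_t\fa_0^tH^i_{\fb_0+R_+}(M,N)_n$ (or otherwise argue it is forced to be zero in the relevant degrees). This should follow because any $x_n$-torsion element is annihilated by $x_n^s$ for some $s$, hence lies in $\fa_0^t H^i_{\fb_0+R_+}(M,N)_n$ for every $t$ once one uses that the graded pieces are finitely generated $R_0$-modules and invokes the separatedness; but the bookkeeping linking $x_n$-torsion to the full $\fa_0$-adic filtration is where care is needed. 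Once that is settled, everything else is a formal repetition of the argument in Lemma~\ref{1}, and I would present it by citing that lemma's proof rather than re-deriving the exact sequence manipulations.
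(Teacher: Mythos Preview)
Your overall strategy---induct on the number of generators of $\fa_0$ and imitate the proof of Lemma~\ref{1} at each step, using the long exact sequence of \ref{seq}(i) with $\fb=\fb_0+R_+$ and $x=x_n$---is exactly what the paper does. The gap is in how you invoke the separatedness hypothesis.

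You focus on the \emph{kernel} of the localization map $A:=H^i_{\fb_0+R_+}(M,N)_n\to A_{x_n}$, trying to argue that the $x_n$-torsion of $A$ lands in $\bigcap_t\fa_0^tA$. That containment is false in general (take $A=R_0/x_nR_0$: everything is $x_n$-torsion, yet $\bigcap_t\fa_0^tA=0$), and your proposed justification---``annihilated by $x_n^s$, hence lies in $\fa_0^tA$''---is a non sequitur. You also appeal to the graded pieces being finitely generated $R_0$-modules, but that is precisely what is \emph{not} known here; if it were, Corollary~2.6 would already apply and the separatedness hypothesis would be redundant.

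The correct mechanism uses \emph{surjectivity}. Fix $n$ with $H^i_{\fa}(M,N)_n=0$ for \emph{all} $i$; the long exact sequence then shows that $A\to A_{x_n}$ is an isomorphism. Injectivity says $A$ has no $x_n$-torsion; combined with surjectivity this gives $A=x_nA$ (if $a/x_n=b/1$ and there is no torsion, then $a=x_nb$). Iterating, $A=x_n^tA\subseteq\fa_0^tA$ for every $t$, whence $A\subseteq\bigcap_t\fa_0^tA=0$. This is exactly where the hypothesis replaces the Nakayama step of Lemma~\ref{1}: there $A$ was finitely generated over the local ring $(R_0,\m_0)$, so $A=x_nA$ with $x_n\in\m_0$ already forced $A=0$.

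One more minor correction: what the argument yields is that, for fixed $n$, $H^i_{\fa}(M,N)_n=0$ for all $i$ if and only if $H^i_{\fb_0+R_+}(M,N)_n=0$ for all $i$. This immediately gives $a^*_{\fa}(M,N)=a^*_{\fb_0+R_+}(M,N)$, but it does \emph{not} give $\en(H^i_{\fa}(M,N))=\en(H^i_{\fb_0+R_+}(M,N))$ for each individual $i$, as your phrasing suggests.
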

\begin{proof}
Let $\fa_0= (x_1,\cdots, x_n)R_0$. Then the result follows by similar argument as used in \ref{1} in conjunction with induction on $n$.
\end{proof}

\begin{cor}
Let $H^i_{\fb_0+ R_+}(M, N)_n$ be finitely generated for all $i\in \N_0$, $n\in \Z$ and ideal $\fb_0\subseteq \fa_0$ of $R_0$. Then $a^*_{\fa}(M, N)= a^*_{R_+}(M, N)$.
\end{cor}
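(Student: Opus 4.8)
The plan is to derive this corollary directly from the theorem immediately preceding it, so that the only work is to verify that the present finiteness hypothesis forces the intersection condition required there. Concretely, I would fix $i\in\N_0$, an ideal $\fb_0\subseteq\fa_0$ of $R_0$, and an integer $n$, and set $L:=H^i_{\fb_0+R_+}(M,N)_n$, which is a finitely generated $R_0$-module by assumption.

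The crux is then a one-line invocation of Krull's Intersection Theorem: since $L$ is finitely generated over the Noetherian local ring $(R_0,\m_0)$ and $\fa_0\subseteq\m_0$, one has $\bigcap_{t\in\N}\fa_0^t L\subseteq\bigcap_{t\in\N}\m_0^t L=0$. Because $i$, $\fb_0$ and $n$ were arbitrary, this says precisely that $\bigcap_{t\in\N}\fa_0^t H^i_{\fb_0+R_+}(M,N)_n=0$ for all $i\in\N_0$, all ideals $\fb_0\subseteq\fa_0$ of $R_0$, and all $n\in\Z$, which is the hypothesis of the preceding theorem. Applying that theorem then yields $a^*_{\fa}(M,N)=a^*_{R_+}(M,N)$.

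I do not expect a genuine obstacle here, since the statement is essentially a repackaging of the previous theorem under a stronger, more checkable assumption. The only point worth a moment's care is that the preceding theorem requires the intersection condition for every subideal $\fb_0\subseteq\fa_0$ and not merely for $\fa_0$ itself; but the hypothesis of the corollary is phrased in the same way, so nothing further is needed. It is also worth remarking, as a sanity check, that the finiteness hypothesis is not vacuous: it holds for instance when $\fb_0+R_+=R_+$ by \cite[3.2]{kh}, which is exactly the input that drives the proof of Lemma \ref{1}.
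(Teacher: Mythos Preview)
Your proposal is correct and is exactly the argument the paper intends: the corollary is placed immediately after the theorem with the intersection hypothesis, and the passage from the present finiteness assumption to that hypothesis is precisely Krull's Intersection Theorem over the local ring $(R_0,\m_0)$, using the standing assumption $\fa_0\subseteq\m_0$. Nothing further is needed.
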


\begin{rem}\label{iff}
Let $\fb_0$ be a second ideal of $R_0$ such that $\fa_0\subseteq \fb_0$. Then, using the exact sequence \ref{seq}(i) one can see that $a^*_{\fb_0+ R_+}(M, N)\leq a^*_{\fa_0+ R_+}(M, N)$. So, $a^*_{\m}(M, N)\leq a^*_{\fa_0+ R_+}(M, N)\leq a^*_{R_+}(M, N)$. Hence, $a^*_{\fa}(M, N)= a^*_{R_+}(M, N)$, for all proper homogeneous ideal $\fa\supseteq R_+$, if and only if $a^*_{\m}(M, N)\geq a^*_{R_+}(M, N)$.
\end{rem}

We now remind a duality theorem of generalized local cohomology modules. It is needed to prove the last theorem of this section.

\begin{thm}\label{dual}
Let $R$ be Cohen-Macaulay with $\dim(R)= d$ which posses a canonical module $\omega_R$  and let $^*E_R(R/ \m)$ be the graded injective envelope of $R/ \m$. Also, assume that the projective dimension of $M$ or $N$ is finite. Then, there exist homogeneous isomorphisms $$H^{d- i}_{\m}(M, N)\cong ^*\Hom_R(\Ext^i_R(N, \omega_R\otimes_R M), ^*E_R(R/ \m)),$$
for all $i\in \N_0$.
\end{thm}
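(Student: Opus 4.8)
The plan is to establish this as a graded version of the generalized local cohomology duality theorem, mimicking the classical Grothendieck--Serre duality argument. First I would reduce to the case where $M$ has finite projective dimension: if instead $\pd_R(N) < \infty$, then by \ref{seq}(iii) and the standard spectral sequence computing $H^i_{\m}(M,N)$ via $\Ext$, one can swap the roles using $\Ext^i_R(N, \omega_R \otimes_R M) \cong \Ext^i_R(N \otimes_R (\text{something}), \ldots)$; more honestly, the two cases are handled by the same bookkeeping once one notes that $\pd_R(\omega_R \otimes_R M) = \pd_R(M)$ when $R$ is Cohen--Macaulay (since $\omega_R$ is a maximal Cohen--Macaulay module, locally of finite injective dimension, and tensoring a finite free resolution of $M$ with $\omega_R$ stays a resolution as $\Tor^R_{>0}(\omega_R, M)$ vanishes when $\pd_R M < \infty$ by the Auslander--Buchsbaum formula and depth counting). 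So without loss of generality assume $\pd_R(M) < \infty$ and let $F_\bullet \to M$ be a finite graded free resolution.

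Next I would compute $H^{d-i}_{\m}(M,N)$ using \ref{seq}(iii): taking a minimal graded injective resolution $^*E_R^\bullet(N)$ of $N$, we have $H^{d-i}_{\m}(M,N) \cong H^{d-i}\bigl(\Hom_R(M, \Gamma_{\m}(^*E_R^\bullet(N)))\bigr)$. Because $\Gamma_{\m}$ of a graded injective is a direct sum of copies of $^*E_R(R/\m)$, and $^*E_R(R/\m)$ is graded-injective and (graded-)flat is not the point — rather it is that $^*\Hom_R(-, {}^*E_R(R/\m))$ is exact — I would instead run the argument through graded Matlis duality. Writing $D(-) = {}^*\Hom_R(-, {}^*E_R(R/\m))$, graded local duality for the Cohen--Macaulay ring $R$ gives $\Gamma_{\m}(^*E_R^\bullet(N)) $ quasi-isomorphic (up to the shift by $d$ and a dualizing complex argument) to $D(\text{a complex computing } \Ext^\bullet_R(N, \omega_R))$; concretely, $H^j_{\m}(N) \cong D(\Ext^{d-j}_R(N,\omega_R))$. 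Then $\Hom_R(M, -)$ applied to this, combined with the adjunction $\Hom_R(M, D(X)) \cong D(M \otimes_R X)$ (valid since $^*E_R(R/\m)$ is graded-injective so $D$ turns $\otimes$ into $\Hom$), and the finite free resolution $F_\bullet$ of $M$, converts the hypercohomology into $D\bigl(\Ext^i_R(N, \omega_R \otimes_R M)\bigr)$ after reindexing $j = d-i$ and identifying $M \otimes_R \Ext^{d-j}_R(N,\omega_R)$ with $\Ext^{i}_R(N, \omega_R \otimes_R M)$ via the $\Tor$-vanishing above (this last identification is the place where $\pd_R M < \infty$ is used, letting one move $M$, resp. $F_\bullet$, freely past $\omega_R$ and past the $\Ext$).

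The cleanest route, which I would actually write up, is to invoke the known graded generalized-local-cohomology local duality isomorphism $H^j_{\m}(M,N) \cong D\bigl(H^{d-j}(\Hom_R(N, \Hom_R(M, \omega_R[\,\cdot\,]) )\bigr)$ type statement from the literature on graded canonical modules (e.g. via \cite{bsh} Chapter 12/13 in the ungraded case, adapted gradedly) and then simplify the right-hand side: since $\pd_R M < \infty$, $\Hom_R(M,\omega_R) $ can be replaced, in the derived sense, by $\omega_R \otimes_R M$ up to a shift only when $M$ is also maximal Cohen--Macaulay, so more safely I use the $\Tor$-spectral-sequence / resolution argument above to get $\mathbf{R}\Hom_R(N, \omega_R \otimes^{\mathbf{L}}_R M) \simeq \mathbf{R}\Hom_R(N, \omega_R) \otimes^{\mathbf{L}}_R M$, whose cohomology is $\Ext^i_R(N, \omega_R \otimes_R M)$ in each degree because $F_\bullet$ is a bounded complex of frees. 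Applying $D$ and using that $D$ is exact and contravariant yields exactly the claimed homogeneous isomorphism $H^{d-i}_{\m}(M,N) \cong {}^*\Hom_R(\Ext^i_R(N, \omega_R \otimes_R M), {}^*E_R(R/\m))$ for all $i \in \N_0$, with all maps homogeneous because every functor in sight ($\Gamma_{\m}$, $^*\Hom$, the free resolution differentials) is graded.

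The main obstacle I anticipate is the careful handling of the grading and the shifts: graded local duality for $R$ carries a twist by $a(R) = -\en(\omega_R^\vee)$ or similar, and one must verify that these twists cancel so that the stated isomorphism has \emph{no} shift, as well as checking that all spectral sequences degenerate on the page needed (which they do, since $\pd_R M < \infty$ makes the relevant $\Tor$ and the hyper-Ext collapse to a single column/row). A secondary subtlety is confirming that the hypothesis ``$\pd_R M$ or $\pd_R N$ finite'' genuinely suffices in the $\pd_R N < \infty$ case; there the roles of $M$ and $N$ are not symmetric in $H^i_{\m}(M,N)$, so one argues via $\Ext^i_R(N,\omega_R \otimes_R M)$ directly, using that $\omega_R \otimes_R M$ is the relevant module and that a finite injective resolution of $N$ (coming from $\pd_R N < \infty$ plus $R$ Gorenstein on the punctured spectrum, or just from $\operatorname{id}_R \omega_R \otimes$ bookkeeping) makes the same computation go through.
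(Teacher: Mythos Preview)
The paper does not prove this statement at all: its entire proof is the single line ``See \cite[3.8(ii)]{md}.'' So you are not being compared against an argument in the paper, but against a citation to Divaani-Aazar and Mohammadi Aghjeh Mashhad.

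Your derived-category sketch (the ``cleanest route'') is essentially the right argument in the case $\pd_R M<\infty$: from $H^{\bullet}_{\m}(M,N)=H^{\bullet}\bigl(\Hom_R(M,\Gamma_{\m}({}^*E_R^{\bullet}(N)))\bigr)$ one passes to $\mathbf{R}\Hom_R(M,\mathbf{R}\Gamma_{\m}(N))$, applies graded local duality $\mathbf{R}\Gamma_{\m}(N)\simeq D(\mathbf{R}\Hom_R(N,\omega_R))[-d]$, uses Hom--tensor adjunction $\Hom_R(M,D(-))\cong D(M\otimes_R-)$, and then the swap $M\otimes^{\mathbf{L}}_R\mathbf{R}\Hom_R(N,\omega_R)\simeq\mathbf{R}\Hom_R(N,\omega_R\otimes^{\mathbf{L}}_R M)$, which is exactly where perfection of $M$ enters. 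The vanishing $\Tor^R_{>0}(\omega_R,M)=0$ (MCM against finite projective dimension) then collapses $\omega_R\otimes^{\mathbf{L}}_R M$ to $\omega_R\otimes_R M$, and you are done, with no shift because the $[-d]$ is absorbed by the reindexing $j\mapsto d-i$. Your earlier paragraph's attempt to identify $M\otimes_R\Ext^{d-j}_R(N,\omega_R)$ with $\Ext^i_R(N,\omega_R\otimes_R M)$ directly at the module level is not valid in general; the derived formulation is what makes this honest.

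The genuine gap is the case $\pd_R N<\infty$ with $\pd_R M=\infty$. Your reduction in the first paragraph does not actually reduce that case to the other, and the final paragraph's suggestion (finite injective resolution of $N$, Gorenstein on the punctured spectrum) is not on point: $\pd_R N<\infty$ does not give $N$ finite injective dimension unless $R$ is Gorenstein. The correct mechanism in this case is different: one uses that $\Ext^i_R(N,\omega_R\otimes_R M)$ can be computed from a finite free resolution $G_\bullet$ of $N$, so that the swap $\mathbf{R}\Hom_R(N,\omega_R)\otimes^{\mathbf{L}}_R M\simeq\mathbf{R}\Hom_R(N,\omega_R\otimes^{\mathbf{L}}_R M)$ now holds because $N$ (rather than $M$) is perfect, and one needs in addition that $\Tor^R_{>0}(\omega_R,M)$ is controlled --- which is handled in \cite{md} via Foxby's Auslander/Bass class machinery rather than by the MCM argument you gave. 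If you want a self-contained proof covering both hypotheses, you should consult \cite{md} for that half; your sketch as written only settles the $\pd_R M<\infty$ case.
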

\begin{proof}
See \cite[3.8(ii)]{md}.
\end{proof}

\begin{thm}\label{cm}
Let $R$ be Cohen-Macaulay with $\dim(R)= d$ and assume that the projective dimension of $M$ or $N$ is finite. Then, $a^*_{\fa}(M, N)= a^*_{R_+}(M, N)$.
\end{thm}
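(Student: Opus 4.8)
The plan is to reduce everything, via Remark \ref{iff}, to the one inequality $a^*_{\m}(M,N)\ge a^*_{R_+}(M,N)$, and to establish it by computing the left-hand side through the duality Theorem \ref{dual}. Recall from \ref{iff} that $a^*_{\m}(M,N)\le a^*_{\fa}(M,N)\le a^*_{R_+}(M,N)$ for every proper homogeneous ideal $\fa\supseteq R_+$; so once $a^*_{\m}(M,N)\ge a^*_{R_+}(M,N)$ is proved, all of these invariants coincide and the assertion follows at once.

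First I would record how the graded Matlis duality $D(X):={}^{*}\Hom_R(X,{}^{*}E_R(R/\m))$ behaves on ends. For a finitely generated graded $R$-module $X$ one has $D(X)_{-n}\cong\Hom_{R_0}(X_n,E_{R_0}(R_0/\m_0))$, and this is nonzero exactly when $X_n\ne 0$, because $X_n$ is finitely generated over the local ring $(R_0,\m_0)$ and a finitely generated module over a local ring vanishes iff its Matlis dual does. Hence $\en(D(X))=-\beg(X)$. Substituting this into the isomorphisms of Theorem \ref{dual} gives
$$a^*_{\m}(M,N)=\sup_{i\ge 0}\en\bigl(H^{d-i}_{\m}(M,N)\bigr)=-\inf_{i\ge 0}\beg\bigl(\Ext^i_R(N,\omega_R\otimes_R M)\bigr),$$
which is finite by \ref{gen}(i), so that the infimum is attained.

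It then remains to prove $a^*_{R_+}(M,N)\le a^*_{\m}(M,N)$, and the plan is to pull this back to the module case, which is already known: by \cite[3.2]{sh}, \cite[2.3]{Hy} and \cite[1.6]{jz}, applied with $\fa_0=\m_0$, every finitely generated graded $R$-module $X$ satisfies $a^*_{R_+}(X)=a^*_{\m}(X)$. Assume $\pd_R M<\infty$ (the case $\pd_R N<\infty$ being handled by a parallel argument). From the spectral sequence $E_2^{p,q}=H^p_{R_+}(\Ext^q_R(M,N))\Rightarrow H^{p+q}_{R_+}(M,N)$ — the one already used in Theorem \ref{cd}, convergent since $\Ext^q_R(M,N)=0$ for $q>\pd_R M$ — one gets $a^*_{R_+}(M,N)\le\max_q a^*_{R_+}(\Ext^q_R(M,N))$, and the module case turns the right-hand side into $\max_q a^*_{\m}(\Ext^q_R(M,N))$, which by graded local duality over the Cohen–Macaulay ring $R$ equals $-\inf_{p,q}\beg\bigl(\Ext^p_R(\Ext^q_R(M,N),\omega_R)\bigr)$. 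Since $\pd_R M<\infty$ the complex $M$ is perfect and $\Tor^R_{>0}(\omega_R,M)=0$ ($\omega_R$ being maximal Cohen–Macaulay), so there is a biduality quasi-isomorphism $\mathbf{R}\Hom_R(\mathbf{R}\Hom_R(M,N),\omega_R)\simeq\mathbf{R}\Hom_R(N,\omega_R\otimes_R M)$; the hyper-$\Ext$ spectral sequence of this identity, whose $E_2$-terms are the $\Ext^p_R(\Ext^q_R(M,N),\omega_R)$ and which abuts to the $\Ext^i_R(N,\omega_R\otimes_R M)$, is what one uses to compare the two infima and conclude $\max_q a^*_{\m}(\Ext^q_R(M,N))=a^*_{\m}(M,N)$, hence $a^*_{R_+}(M,N)\le a^*_{\m}(M,N)$.

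The step I expect to be the genuine obstacle is precisely this last identification $\max_q a^*_{\m}(\Ext^q_R(M,N))=a^*_{\m}(M,N)$. One inequality, $\ge$, is immediate from the biduality spectral sequence, since the abutment is built from subquotients of the $E_2$-page; the reverse inequality is delicate because a term realizing the extremal value of $\beg$ could a priori be killed, or created, by a differential. Ruling this out is where the hypotheses really enter: the finiteness of $\pd_R M$ confines the spectral sequence to the strip $0\le q\le\pd_R M$, so there is no room for the dangerous differentials at the extreme edge, and the Cohen–Macaulayness of $R$ guarantees that $\omega_R$ is maximal Cohen–Macaulay, so that the biduality above is an honest quasi-isomorphism and the $\Ext$-modules involved are well behaved (finitely generated, with the stated $\Tor$-vanishing). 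Carrying out this cancellation analysis cleanly is the technical core of the argument.
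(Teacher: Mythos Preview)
Your reduction via Remark \ref{iff} and the computation $a^*_{\m}(M,N)=-\inf_i\beg\bigl(\Ext^i_R(N,\omega_R\otimes_R M)\bigr)$ from Theorem \ref{dual} are correct (though you silently assume $\omega_R$ exists; the paper first passes to the $\m_0$-adic completion of $R_0$ to secure this). The genuine gap is exactly where you locate it, and your proposed fix does not close it. Both the Grothendieck spectral sequence $H^p_{\m}(\Ext^q_R(M,N))\Rightarrow H^{p+q}_{\m}(M,N)$ and your biduality spectral sequence give only the inequality $a^*_{\m}(M,N)\le\max_q a^*_{\m}(\Ext^q_R(M,N))$; neither gives the reverse. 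Your ``strip'' argument would require the $E_2$-term realizing the minimal $\beg$ to sit at an edge $q=0$ or $q=\pd_R M$, but nothing forces this: the extremal term can lie in the interior and be hit by differentials from both sides. (Also, the justification ``$\omega_R$ MCM'' for $\Tor^R_{>0}(\omega_R,M)=0$ is insufficient---MCM alone does not give Tor-vanishing against finite-pd modules; one needs the Foxby-equivalence property specific to the canonical module.) Finally, the ``parallel argument'' for $\pd_R N<\infty$ is not parallel: the spectral sequence you invoke from Lemma \ref{cd} needs $\pd_R M<\infty$.

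The paper avoids this obstruction entirely. After the same reduction and the same duality computation, it does \emph{not} try to compare with $\max_q a^*_{\m}(\Ext^q_R(M,N))$. Instead, setting $s=a^*_{\m}(M,N)$, it observes that $\Ext^{d-i}_R(N,\omega_R\otimes_R M)_{-n}=0$ for $n>s$, localizes at any $\p_0\in\Spec(R_0)$, and applies Theorem \ref{dual} again over $R_{\p_0}$ to get $H^i_{\p_0R_{\p_0}+(R_{\p_0})_+}(M_{\p_0},N_{\p_0})_n=0$ for $n>s$. Induction on $\dim R_0$ then shows $(H^i_{R_+}(M,N)_n)_{\p_0}=0$ for all $\p_0\ne\m_0$ and $n>s$, so $H^i_{R_+}(M,N)_n$ has finite length; the spectral sequence $H^i_{\m_0}(H^j_{R_+}(M,N)_n)\Rightarrow H^{i+j}_{\m}(M,N)_n$ therefore collapses, giving $H^i_{R_+}(M,N)_n\cong H^i_{\m}(M,N)_n=0$ for $n>s$. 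The key idea you are missing is this localization-plus-induction step, which transports the $\m$-vanishing to $R_+$-vanishing without ever needing control of $a^*_{\m}(\Ext^q_R(M,N))$.
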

\begin{proof}
Using \ref{iff}, it is enough to show that $a^*_{\m}(M, N)\geq a^*_{R_+}(M, N)$.
Let $\widehat{R_0}$ denote the $\m_0$-adic completion of $R_0$. Then, in view of the flat base change property of generalized local cohomology modules (\cite[4(ii)]{kh}), for all $i\in \N_0$ we have an isomorphism of graded modules
$H^{i}_{\m}(M, N)\otimes_{R_0}\widehat{R_0}\cong H^{i}_{\m'}(M\otimes_{R_0}\widehat{R_0}, N\otimes_{R_0}\widehat{R_0})$, where $\m'$ is the image of $\m$ under the faithful flat homomorphism $R\longrightarrow R\otimes_{R_0}\widehat{R_0}$. Therefore, for all $i\in \N_0$ and all $n\in \Z$, $H^{i}_{\m}(M, N)_n= 0$ if and only if $H^{i}_{\m'}(M\otimes_{R_0}\widehat{R_0}, N\otimes_{R_0}\widehat{R_0})_n= 0$. So, replacing $R$ with $R\otimes_{R_0}\widehat{R_0}$, we may assume that $R$ is a homomorphic image of a Gornestein ring. Which implies that it admits a canonical module $\omega_R$.

Set $s:= a^*_{\m}(M, N)$. In view of \ref{dual} and \cite[13.4.5(i), (iv)]{bsh}, there exist following isomorphisms
\begin{align}
H^{i}_{\m}(M, N)_n&\cong ^*\Hom_R(\Ext^{d- i}_R(N, \omega_R\otimes_R M), ^*E_R(R/ \m))_n \nonumber\\
&\cong \Hom_{R_0}(\Ext^{d- i}_R(N, \omega_R\otimes_R M)_{-n}, E_0), \nonumber
\end{align}

where $E_0= E_{R_0}(R_0/ \m_0)$. So, we have
\[    \hspace{3cm} \Ext^{d- i}_R(N, \omega_R\otimes_R M)_{-n}= 0 \ \ \mbox{for all} \ \ i\in \N_0 \ \ \mbox{and all} \ n> s.\hspace{3cm}(1)
\]

Now, let $\p_0\in \Spec (R_0)$, again using \ref{dual} and the fact that $R_{\p_0}$ admits a canonical module and that $\omega_{R_{\p_0}}\cong (\omega_{R})_{\p_0}$(\cite[3.3.5]{bh}), we conclude

\begin{align}
 H^{i}_{\p_0 R_{\p_0}+ (R_{\p_0})_+}(M_{\p_0}, N_{\p_0})_n
&\cong \Hom_{R_{\p_0}}(\Ext^{\hei(\p_0+ R_+)- i}_{R_{\p_0}}(N_{\p_0}, \omega_{R_{\p_0}}\otimes_{R_{\p_0}} M_{\p_0}), E_{R_0}(R_0/ \p_0)_{\p_0})_n \nonumber \\
&\cong (\Hom_{R_0}(\Ext^{\hei(\p_0+ R_+)- i}_R(N, \omega_R\otimes_R M)_{-n}, E_{R_0}(R_0/ \p_0))_{\p_0}.\nonumber
\end{align}

So, in view of $(1)$,

\[
\hspace{3cm} H^{i}_{\p_0R_{\p_0}+ (R_{\p_0})_+}(M_{\p_0}, N_{\p_0})_n= 0 \ \mbox{ for all} \ i\in \N_0 \ \mbox{and all} \ n> s.\hspace{2.9cm}(2)
\]

Next, we use induction on $\dim(R_0)$ to prove that $H^{i}_{R_+}(M, N)_n= 0$ for all $n> a^*_{\m}(M, N)$ and all $i\in \N_0$. In the case where $\dim(R_0)= 0$, we have
$$H^{i}_{ R_+}(M, N)_{n}\cong H^{i}_{\m_0 + R_+}(M, N)_n= 0 \ \ \ \mbox{for all } \ i\in \N_0 \ \ \mbox{and all}  \ \ n> a^*_{\m}(M, N).$$
Now, let $\dim(R_0)> 0$ and $\p_0\in \Spec (R_0)\backslash \{\m_0\}$. Then, using $(2)$ and inductive hypothesis, it is concluded that
$(H^{i}_{R_+}(M, N)_n)_{\p_0}\cong H^{i}_{(R_{\p_0})_+}(M_{\p_0}, N_{\p_0})_n= 0$ for all $i\in \N_0$ and all $n> s$. Therefore $\Supp_{R_0}(H^{i}_{R_+}(M, N)_n)\subseteq \{\m_0\}$, which implies that $H^{i}_{R_+}(M, N)_n$ has finite length for all $i\in \N_0$ and all $n> s$. Now, the convergence of spectral sequences
$$(E_2^{i, j})_n=H^i_{\m_0}(H^{j}_{R_+}(M, N)_n) \underset{i}{\Rightarrow}H^{i+ j}_{\m}(M, N)_n$$
(\cite[11.38]{r}),
in conjunction with the fact that $H^i_{\m_0}(H^{j}_{R_+}(M, N)_n)= 0$ for all $n> s$, all $j\in \N_0$ and all $i> 0$,
shows that $H^{i}_{R_+}(M, N)_n\cong H^{i}_{\m}(M, N)_n= 0$ for all $i\in \N_0$ and all $n> s$. Hence, $a^*_{\m}(M, N)\geq a^*_{R_+}(M, N)$, as desired.
\end{proof}

\section{Tameness at finiteness dimension and almost top levels}

As we have seen in \ref{gen}, $H^i_{\fa}(M, N)_n =0$ for all $n\gg 0$ and all $i\in \N_0$. In this section we are going to study the asymptotic behavior of $H^i_{\fa}(M, N)_n $ when $n\rightarrow -\infty$. In particular, we will show that $H^i_{\fa}(M, N)$ is \emph{Tame} (in the sense that $H^i_{\fa}(M, N)_n= 0$ for all $n\ll 0$ or $H^i_{\fa}(M, N)_n\neq 0$ for all $n\ll 0$) in the first and last "\emph{nontrivial case}".

In \cite[2.3]{b}, it is shown that whenever $(R_0, \m_0)$ is local and $\cd_{R_+}(N)> 0$, $H^{\cd_{R_+}(N)}_{R_+}(N)_n\neq 0 $ for all $n\ll 0$. As a generalization of this fact, we show that when $\pd_R(M)< \infty$, $H^i_{\fa}(M, N) =0$ for all $i> \pd_R(M)+ \max\{\cd_{\fa}(\mathrm{Ext}^{i}_{R}(M,
N))\mid i\in \N_0\} =: c$ and that $H^c_{\fa}(M, N)= 0$ for all $n\ll 0$ or, in a special case, $H^c_{\fa}(M, N)_n \neq0$ for all $n\ll 0$.
 To this end we need to provide some lemmas.

\begin{lem}\label{2.4}
Let $E $ be an injective $R$-module. Then, $H^{i}_{\fa}
(\mathrm{Hom}_{R}(M, E))=0$ for all $i\in \mathbb{N}$.
\end{lem}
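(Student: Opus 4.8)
The plan is to reduce the statement to the already-known vanishing result for ordinary generalized local cohomology when the first argument has finite projective dimension. First I would recall from Remark \ref{seq}(iii) that for any $R$-module $X$ and any injective resolution $E_N^{\bullet}$ of $N$, one has $H^i_{\fa}(X, N)\cong H^i(\Hom_R(X, \Gamma_{\fa}(E_N^{\bullet})))$; here, however, the roles are different — we are computing $H^i_{\fa}(\Hom_R(M, E), -)$ is not what is asked. Rather, the statement concerns $H^i_{\fa}(M, \Hom_R(M,E))$? Let me re-read: the lemma is $H^i_{\fa}(\Hom_R(M,E)) = 0$, i.e. the \emph{ordinary} local cohomology of the module $\Hom_R(M,E)$ with respect to $\fa$, for all $i\in\N$ (note: $i\geq 1$, not $i\geq 0$).

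So the real content is: if $E$ is injective, then $\Hom_R(M,E)$ is an injective $R$-module (since $M$ is finitely generated — actually $\Hom_R(M,E)$ is injective whenever $E$ is injective and $R$ is Noetherian, because $\Hom_R(-,E)$ is exact and takes the finitely generated module $M$... more precisely $\Hom_R(M,E)$ is injective for $M$ finitely presented over a Noetherian ring, or one uses that $E$ is a direct sum of $E_R(R/\p)$'s and $\Hom_R(M, E_R(R/\p))$ is a finite direct sum of copies of $E_R(R/\p)$ localized). Once $\Hom_R(M,E)$ is shown to be injective, the claim is immediate: for an injective module $I$, $\Gamma_{\fa}(I)$ is again injective and $H^i_{\fa}(I) = H^i(\Gamma_{\fa}(I^{\bullet}))$ computed from the resolution $0\to I\to I\to 0$ (with $I$ in degree $0$) vanishes for $i\geq 1$. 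Therefore the key step I would carry out is: show $\Hom_R(M,E)$ is injective.

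To establish injectivity of $\Hom_R(M,E)$, I would use the structure theory of injective modules over a Noetherian ring: write $E = \bigoplus_{\lambda} E_R(R/\p_\lambda)$ (Matlis), and reduce to showing $\Hom_R(M, E_R(R/\p))$ is injective for a single prime $\p$. Since $M$ is finitely generated, $\Hom_R(M, E_R(R/\p)) \cong \Hom_{R_\p}(M_\p, E_R(R/\p))$, and over the local ring $R_\p$ with $M_\p$ finitely generated, $\Hom_{R_\p}(M_\p, E_{R_\p}(R_\p/\p R_\p))$ is the Matlis dual of a Noetherian module, hence Artinian, and in particular is a module over the completion whose injective hull is a finite direct sum of copies of $E_R(R/\p)$; more cleanly, $\Hom_R(-, E)$ is an exact functor, so it carries the free presentation $R^{b}\to R^{a}\to M\to 0$ to an exact sequence exhibiting $\Hom_R(M,E)$ as the kernel of a map $E^a\to E^b$ — and kernels of maps between injectives need not be injective in general, so I should instead argue directly via the primary decomposition. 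The cleanest route: $\Hom_R(M, E_R(R/\p))$ only depends on $M_\p/\p M_\p$ up to the structure, and is in fact $\cong \bigoplus$ (finitely many) $E_R(R/\p)$ when $M_\p$ is free, and in general is the Matlis dual over $\widehat{R_\p}$ of $\widehat{M_\p}$, which is injective as an $\widehat{R_\p}$-module, hence as an $R$-module supported at $\p$ it remains injective. I expect this injectivity verification to be the main obstacle — everything after it is formal.

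Alternatively, and perhaps more in the spirit of the paper's machinery, I would bypass injectivity and argue via Remark \ref{seq}(iii) applied with the roles swapped: there is a spectral sequence or direct identification $H^i_{\fa}(\Hom_R(M,E)) = H^i_{\fa}(R, \Hom_R(M,E))$, and one can relate $\Hom_R(M,E)$ to $E$ through a finite free resolution of $M$ — but this again leads back to needing that $\Hom_R(M,E)$ behaves like an injective. So I will commit to the injectivity approach: (1) reduce to $E = E_R(R/\p)$; (2) identify $\Hom_R(M, E_R(R/\p))$ with the $\widehat{R_\p}$-Matlis dual of $\widehat{M_\p}$ and deduce it is injective over $R$; (3) conclude $\Gamma_{\fa}(\Hom_R(M,E))$ is injective and hence $H^i_{\fa}(\Hom_R(M,E)) = 0$ for $i\geq 1$. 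The grading is compatible throughout since $M$ is graded and one works with $^*\Hom$ and graded injective hulls, but that adds nothing essential to the argument.
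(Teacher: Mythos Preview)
Your committed approach has a genuine error: $\Hom_R(M, E)$ is \emph{not} injective in general, even for $M$ finitely generated over a Noetherian ring. For a concrete counterexample, take $R = k[x]/(x^2)$ (a zero-dimensional Gorenstein local ring, so $R$ itself is injective), $M = k = R/(x)$, and $E = R$. Then $\Hom_R(k, R)$ is the socle $(x) \cong k$, which is not injective as an $R$-module since the inclusion $k \hookrightarrow R$ does not split. The misstep is your claim that the Matlis dual of a Noetherian module is injective: the Matlis dual of a Noetherian module is \emph{Artinian}, and Artinian modules are rarely injective (the Matlis dual of $k$ over any local ring is $k$ again).

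The paper's argument avoids this pitfall entirely and is essentially the route you glanced at and then discarded. Take a free resolution $P_\bullet^M \to M \to 0$ with each $P_i$ finitely generated free. Since $\Hom_R(-, E)$ is exact and each $\Hom_R(P_i, E) \cong E^{n_i}$ is injective, the complex $\Hom_R(P_\bullet^M, E)$ \emph{is} an injective resolution of $\Hom_R(M, E)$ --- no claim about $\Hom_R(M, E)$ itself being injective is needed. Applying $\Gamma_\fa$ and using $\Gamma_\fa(E^{n_i}) = \Gamma_\fa(E)^{n_i} = \Hom_R(P_i, \Gamma_\fa(E))$ (finite direct sums) gives
\[
H^i_\fa(\Hom_R(M, E)) \;\cong\; H^i\bigl(\Hom_R(P_\bullet^M, \Gamma_\fa(E))\bigr) \;=\; \Ext^i_R(M, \Gamma_\fa(E)),
\]
which vanishes for $i \geq 1$ because $\Gamma_\fa(E)$ is injective. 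Your dismissal of this line (``this again leads back to needing that $\Hom_R(M,E)$ behaves like an injective'') was the wrong turn: the free resolution of $M$ furnishes the injective resolution of $\Hom_R(M,E)$ directly, so the question of whether $\Hom_R(M,E)$ is itself injective never arises.
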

\begin{proof}
Note that if $P_{\bullet}^{M}$ is a free resolution of $M$,
then $\mathrm{Hom}_{R}(P_{\bullet}^{M}, E)$ is an injective
resolution of $\mathrm{Hom}_{R}(M, E)$. Now, the assertion follows
from the following isomorphisms
\[H^{i}_{\fa} (\mathrm{Hom}_{R}(M, E))\cong
H^{i}(\mathrm{Hom}_{R}(P_{\bullet}^{M}, \Gamma _{\fa }(E)))\cong
\mathrm{Ext}^{i}_{R}(M, \Gamma _{\fa }(E)),\]  in conjunction with the
fact that $\Gamma _{\fa }(E)$ is an injective $R$-module.
\end{proof}

\begin{lem}\label{cd}
Let $p:= \pd_R(M)< \infty$ and $c:=\mathrm{max} \{ \mathrm{cd}_{\fa}(\mathrm{Ext}^{i}_{R}(M,
N))| 0\leq i\leq p\}$. Then the following statements hold:
\\$(\mathrm{i})$ $H^{i}_{\fa} (M, N)=0$ for all $i>p+c$;
\\$(\mathrm{ii})$ $H^{p+c}_{\fa} (M, N)\cong H^{c}_{\fa} (\mathrm{Ext}^{p}_{R}(M,
N))$;
\\ $(\mathrm{iii})$ $\mathrm{cd}_{\fa}(M, N)=p+c$ if and only if
$c=\mathrm{cd}_{\fa}(\mathrm{Ext}^{p}_{R}(M, N))$.
\end{lem}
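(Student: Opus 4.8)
The plan is to use a projective resolution of $M$ to reduce the computation of $H^i_{\fa}(M,N)$ to the generalized local cohomology of the $\Ext$ modules $\Ext^i_R(M,N)$, via the hyper-cohomology spectral sequence attached to the bifunctor. Concretely, let $P_\bullet \colon 0 \to P_p \to \cdots \to P_0 \to M \to 0$ be a finite graded free resolution of $M$ (available since $p = \pd_R(M) < \infty$). Recalling from \ref{seq}(iii) that $H^i_{\fa}(M,N) \cong H^i(\Hom_R(M, \Gamma_{\fa}(E_N^\bullet)))$ for an injective resolution $E_N^\bullet$ of $N$, one obtains a first-quadrant (cohomological) spectral sequence with $E_2$-page $E_2^{s,t} = H^s_{\fa}(\Ext^t_R(M,N)) \Rightarrow H^{s+t}_{\fa}(M,N)$. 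The key inputs are that $\Ext^t_R(M,N) = 0$ for $t > p$, and that $H^s_{\fa}(\Ext^t_R(M,N)) = 0$ for $s > c$ by the definition of $c$; also \ref{2.4} guarantees the relevant double-complex spectral sequence degenerates correctly, so the abutment really is $H^\bullet_{\fa}(M,N)$.

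With the spectral sequence in hand, all three statements follow by inspecting where the $E_2$-page is supported. For (i): the nonzero entries $E_2^{s,t}$ all satisfy $s \leq c$ and $t \leq p$, so $E_2^{s,t} = 0$ whenever $s + t > p + c$; since the abutment in total degree $n$ is filtered by subquotients of $\{E_\infty^{s,n-s}\}$ and each $E_\infty^{s,n-s}$ is a subquotient of $E_2^{s,n-s}$, we get $H^i_{\fa}(M,N) = 0$ for $i > p + c$. For (ii): in total degree $p + c$, the only potentially nonzero $E_2$-entry is the corner $E_2^{c,p} = H^c_{\fa}(\Ext^p_R(M,N))$; all differentials into and out of this spot land in zero regions (the incoming $d_r$ comes from $E_r^{c-r,\,p+r-1}$, which has second index $> p$, hence vanishes, and the outgoing $d_r$ goes to $E_r^{c+r,\,p-r+1}$, which has first index $> c$, hence vanishes), so $E_\infty^{c,p} = E_2^{c,p}$, and since it is the only surviving term in that total degree the edge-type argument gives $H^{p+c}_{\fa}(M,N) \cong H^c_{\fa}(\Ext^p_R(M,N))$. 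For (iii): if $c = \cd_{\fa}(\Ext^p_R(M,N))$ then $H^c_{\fa}(\Ext^p_R(M,N)) \neq 0$, so by (ii) $H^{p+c}_{\fa}(M,N) \neq 0$, and combined with (i) this forces $\cd_{\fa}(M,N) = p + c$; conversely, if $c > \cd_{\fa}(\Ext^p_R(M,N))$, then $H^c_{\fa}(\Ext^p_R(M,N)) = 0$, so (ii) gives $H^{p+c}_{\fa}(M,N) = 0$, and together with (i) this yields $\cd_{\fa}(M,N) < p + c$.

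The main obstacle I anticipate is being careful about the existence and convergence of the spectral sequence in the graded setting, and in particular verifying that $\Gamma_{\fa}(E)$ is injective for injective $E$ and that $\Gamma_{\fa}(\Hom_R(P_\bullet, E_N^\bullet))$ computes both the $\Ext$-into-local-cohomology composite and the abutment correctly. Lemma \ref{2.4} is precisely the tool that resolves this: it shows that for the double complex $\Gamma_{\fa}(\Hom_R(P_\bullet, E_N^\bullet))$, computing cohomology first in the resolution direction yields $\Gamma_{\fa}$ applied to $\Hom_R(M, E_N^\bullet)$ up to acyclic pieces, so one filtration of the double complex collapses to $H^\bullet_{\fa}(M,N)$ while the other gives the $E_2$-page above. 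Once this bookkeeping is set up, (i)--(iii) are immediate from vanishing of the $E_2$-page outside the strip $0 \leq t \leq p$, $0 \leq s \leq c$; an alternative, spectral-sequence-free route is to induct on $p$ using the short exact sequences coming from breaking up $P_\bullet$ together with \ref{seq}(iv) and \ref{2.4}, but the spectral sequence makes the corner isomorphism in (ii) most transparent.
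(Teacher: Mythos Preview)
Your proof is correct and follows essentially the same approach as the paper: the paper invokes exactly the Grothendieck spectral sequence $E_2^{i,j}=H^i_{\fa}(\Ext^j_R(M,N))\Rightarrow H^{i+j}_{\fa}(M,N)$ (citing Rotman and Lemma~\ref{2.4}), then reads off (i)--(iii) from the vanishing of $E_2^{i,j}$ for $i>c$ or $j>p$ and the resulting corner isomorphism $H^{p+c}_{\fa}(M,N)\cong E_\infty^{c,p}\cong E_2^{c,p}$. Your write-up is more explicit about the double-complex construction and the differential bookkeeping, but the argument is the same.
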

\begin{proof}
Using \cite[11.38]{r} and \ref{2.4}, there exists a Grothendieck's spectral sequence
\[E_{2}^{i, j}=H^{i}_{\fa}(\mathrm{Ext}^{j}_{R}(M, N))\underset{i}{\Rightarrow}H^{i+ j}_{\fa} (M, N).\]
Since $E_{2}^{i, j}=0$ for
$i>c$ or $j>p$, we have $H^{i}_{\fa} (M, N)= 0$ for all $i> p+ c$ and that $$H^{p+c}_{\fa} (M, N)\cong E_{\infty}^{c, p}\cong E_{2}^{c, p}= H^{c}_{\fa}(\mathrm{Ext}^{p}_{R}(M, N)).$$ Therefore, $\mathrm{cd}_{\fa}(M, N)=p+c$ if and only if
$c=\mathrm{cd}_{\fa}(\mathrm{Ext}^{p}_{R}(M, N))$.
\end{proof}

\begin{defen}
Let $\fb$ be an ideal of $R$. Then $M$ is said to be relative Cohen-Macaulay with respect to $\fb$ if $\cd_{\fb}(M)= \grade(\fb, M)$.(see \cite{jr})
\end{defen}

In \cite{z} it is shown that $H ^{i}_{R_+}(M, N)= 0$ for all $i\geq \cd_{R_+}(N)+ \pd_R(M)$ and that $H ^{\cd_{R_+}(N)+ \pd_R(M)}_{R_+}(M, N)$ is tame. In the next theorem, using the notations in \ref{cd}, we are going to study whether $H ^{c+ p}_{\fa}(M, N)$ is tame or not.

\begin{thm}\label{tame}
Let the situations be as in \ref{cd}. Then one of the followings holds:
\\
(\textrm{i}) $H ^{c+ p}_{\fa}(M, N)= 0$;
\\(\textrm{ii}) $H ^{c+ p}_{\fa}(M, N)_n=0$ for all $n\ll 0$;
\\(\textrm{iii}) if $\Ext^p_R(M, N)$ is relative Cohen-Macaulay with respect to $R_+$ with  $\cd_{R_+}(\Ext^p_R(M, N))> 0$ then $H ^{c+ p}_{\fa}(M, N)_n\neq 0$ for all $n\ll 0$.
\end{thm}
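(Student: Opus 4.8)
The plan is to reduce everything to the module $L:=\Ext^p_R(M,N)$ via the isomorphism $H^{c+p}_{\fa}(M,N)\cong H^c_{\fa}(L)$ furnished by \ref{cd}(ii). Thus I only need to analyze the tameness of the single (ordinary) graded local cohomology module $H^c_{\fa}(L)$, where $c=\cd_{\fa}(L')$ with the max taken over the $\Ext$'s; note $c\geq\cd_{\fa}(L)$ always, and equality need not hold. First I would dispose of the trivial overlap: if $c>\cd_{\fa}(L)$ then $H^c_{\fa}(L)=0$, giving case (i); so I may assume $c=\cd_{\fa}(L)$, i.e. $H^c_{\fa}(L)\neq 0$ as an $R$-module, and it remains to decide whether its graded pieces vanish for $n\ll 0$ (case (ii)) or are all nonzero (case (iii)).

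The next step is the dichotomy for $H^c_{\fa}(L)$ itself. I would invoke the long exact sequence of \ref{seq}(i) repeatedly to pass from $\fa=\fa_0+R_+$ down to $R_+$: writing $\fa_0=(x_1,\dots,x_r)R_0$ and peeling off one $x_i$ at a time, each step inserts localizations $(H^{j}_{\bullet}(L)_n)_{x_i}$ into a long exact sequence. Since $c$ is the top nonvanishing level for the relevant family, the top cohomology is right-exact in this process: $H^c_{\fa}(L)$ is a quotient (equivalently an image) of $H^c_{R_+}(L)$ up to these localizations, so in particular $H^c_{\fa}(L)_n$ is a homomorphic image of $H^c_{R_+}(L)_n$ localized finitely, and one gets $H^c_{\fa}(L)_n=0$ whenever $H^c_{R_+}(L)_n=0$. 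Combined with \cite[3.2]{kh} (each $H^c_{R_+}(L)_n$ is a finitely generated $R_0$-module, vanishing for $n\gg0$) this already forces the top level $H^c_{\fa}(M,N)$ to be tame: either $H^c_{\fa}(M,N)_n=0$ for all $n\ll0$, or it is nonzero for all $n\ll0$. This gives the alternative (i)/(ii) versus (iii).

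For the concrete sufficient condition in (iii), assume $L=\Ext^p_R(M,N)$ is relative Cohen--Macaulay with respect to $R_+$ with $\cd_{R_+}(L)>0$, so $\cd_{R_+}(L)=\grade(R_+,L)=:g>0$ and $H^i_{R_+}(L)=0$ for $i\neq g$. Then the spectral/Grothendieck sequence comparing $H^\bullet_{\fa}$ and $H^\bullet_{R_+}$ collapses: $H^c_{\fa}(L)$ with $c=g$ is nonzero precisely when $H^g_{R_+}(L)$ is nonzero, and more sharply, the vanishing-at-$R_+$-off-the-single-degree means the filter-regular-sequence argument of \cite[2.3]{b} applies verbatim to $L$ in place of $N$: the top-level local cohomology $H^g_{R_+}(L)$ has $H^g_{R_+}(L)_n\neq0$ for all $n\ll0$ (this is exactly Brodmann's statement that for $\cd_{R_+}>0$ the top piece never vanishes asymptotically), and pulling this back through the \ref{seq}(i) sequences — where $c$ being top makes the relevant map $H^c_{R_+}(L)_n\to H^c_{\fa}(L)_n$ surjective after the $x_i$-localizations, and the surjectivity together with the finiteness over the local ring $R_0$ keeps the image nonzero — yields $H^{c+p}_{\fa}(M,N)_n\cong H^c_{\fa}(L)_n\neq0$ for all $n\ll0$.

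The main obstacle I anticipate is the last implication: one must be careful that localizing $H^c_{R_+}(L)_n$ at the elements $x_i\in\m_0$ does not kill it. The point to nail down is that, because $R_0$ is local with maximal ideal $\m_0$ and each $H^c_{R_+}(L)_n$ is finitely generated over $R_0$, Nakayama prevents $(H^c_{R_+}(L)_n)_{x_i}$ from vanishing \emph{for all} $n\ll0$ once $H^c_{R_+}(L)_n\neq0$ for all $n\ll0$ — but this needs the support/associated-prime behavior to be controlled, so I would instead run the induction on $r$ (the number of generators of $\fa_0$) directly on the statement ``$H^c_{R_+}(L)_n\neq0$ for all $n\ll0$ implies $H^c_{\fa_0+R_+}(L)_n\neq0$ for all $n\ll0$'', using at each stage the right-exactness of top cohomology in the sequence of \ref{seq}(i) together with the fact that the cokernel term lives in cohomological degree $c+1$ which vanishes by \ref{cd}(i); this makes the surjection $H^c_{\fb_0+R_+}(L)_n\twoheadrightarrow H^c_{\fa_0+R_+}(L)_n$ and the stability of nonvanishing transparent without invoking delicate finiteness of the localized module.
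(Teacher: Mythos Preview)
Your reduction to $L:=\Ext^p_R(M,N)$ via \ref{cd}(ii) and the disposal of case (i) when $\cd_{\fa}(L)<c$ match the paper. The real gap is in (iii): you tacitly set $c=g:=\cd_{R_+}(L)$ (``$H^c_{\fa}(L)$ with $c=g$''), but in general $c=\cd_{\fa}(L)>g$. For instance, with $R_0$ a DVR with parameter $t$, $\fa_0=(t)$ and $L=R$, the module $L$ is relative Cohen--Macaulay with respect to $R_+$ yet $c=g+1$. When $c>g$ the module $H^c_{R_+}(L)$ is identically zero, so \cite[2.3]{b} says nothing at level $c$ and your proposed surjection $H^c_{R_+}(L)_n\twoheadrightarrow H^c_{\fa}(L)_n$ has zero source while the target is nonzero. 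Your justification of such surjections for intermediate ideals is also off: \ref{cd}(i) concerns $H^{>p+c}_{\fa}(M,N)$, not $H^{c+1}_{\fb}(L)$ for $R_+\subseteq\fb\subsetneq\fa$, and the sequence of \ref{seq}(i) does not produce a map in the direction you need.

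The paper's route is different: the spectral sequence $E_2^{i,j}=H^i_{\fa_0R}(H^j_{R_+}(L))\Rightarrow H^{i+j}_{\fa}(L)$ collapses, under the relative Cohen--Macaulay hypothesis, to the single row $j=g$, giving $H^c_{\fa}(L)_n\cong H^{c-g}_{\fa_0}(H^g_{R_+}(L)_n)$. One then chooses (after enlarging the residue field) an $L$-regular element $x\in R_1$ with $\cd_{R_+}(L/xL)=g-1$, so that multiplication by $x$ gives surjections $H^g_{R_+}(L)_{n-1}\twoheadrightarrow H^g_{R_+}(L)_n$ whose kernels have $\cd_{\fa_0}\le c-g$; right-exactness of the top functor $H^{c-g}_{\fa_0}(-)$ then yields surjections $H^{c-g}_{\fa_0}(H^g_{R_+}(L)_{n-1})\twoheadrightarrow H^{c-g}_{\fa_0}(H^g_{R_+}(L)_n)$, and nonvanishing at a single $n_0$ propagates to all $n\le n_0$. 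Case (ii) in the paper is precisely the case $g=0$, handled by the same collapsed spectral sequence, not by the general tameness claim you sketch in your second paragraph.
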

\begin{proof}
For simplicity set $X:= \Ext^p_R(M, N)$. Using \ref{cd}, if $\cd_{\fa}(X)< c$ then $H ^{c+ p}_{\fa}(M, N)= 0$. So, let $\cd_{\fa}(X)= c$. Now, if $c':= \cd_{R_+}(X)= 0$ then
using the Noetherian property of $X$, we have $H^{c+ p}_{\fa}(M, N)_n\cong H^{c}_{\fa} (X)_n\cong H^{c}_{\fa_0}(X_n)= 0$ for all $i$ and all $n\ll 0$. So, let $c'> 0$ and assume, in addition, that $X$ is relative Cohen-Macaulay with respect to $R_+$. Therefore, in view of the convergence of spectral sequences
$$E_2^{i, j}= H^{i}_{\fa_0R} (H^{j}_{R_+} (X))\underset{i}{\Rightarrow}H^{i+ j}_{\fa} (X)$$
and the fact that $E_2^{i, j}= 0$ for all $i\in \N_0$ and all $j\neq c'$, we have the following isomorphism of graded modules $$\hspace{3cm}H^{i}_{\fa_0R} (H^{c'}_{R_+} (X))\cong H^{i+ c'}_{\fa} (X) \ \ \ \mbox{ for all} \ \ \ i\in \N_0. \ \hspace{5cm} (1) $$
Replacing $R_0$ with its faithfull flat extension $R_0[X]_{\m_0[X]}$, we may assume that the residue field $R_0/\m_0$ is infinite. This in conjunction with  \cite[2.3(a)]{b} and \cite[1.5.12]{bh} implies that
$\exists x\in R_1$ such that it is a non-zerodivisor on $X$ and that $\cd(R_+, X/ xX)= \cd_{R_+}(X)- 1= c'- 1$. So, there exist exact sequences
$$\hspace{2.3cm}H^{c'- 1}_{R_+}(X/ xX)_n \stackrel{f_n}{\longrightarrow} H^{c'}_{R_+}(X)_{n- 1}\stackrel{.x}{\longrightarrow} H^{c'}_{R_+}(X)_{n}\longrightarrow 0 \hspace{4cm}(2)$$
for all $n\in \Z$.
On the other hand,
$$c= \cd_{\fa}(X)\geq \grade(\fa, X)\geq \grade(R_+, X)=  \cd_{R_+}(X)= c'.$$
So, using $(1)$, $H^{c- c'}_{\fa_0R} (H^{c'}_{R_+} (X))\cong H^{c}_{\fa} (X)\neq 0$, which implies that $ \cd_{\fa_0R}(H^{c'}_{R_+} (X))= c- c'$.
Hence, in view of \cite[2.2]{dnt} and the fact that $\Supp_{R_0}(\im(f_n))\subseteq \Supp_{R_0}(H^{c'}_{R_+}(X)_{n- 1})$, we have
$$\cd_{\fa_0}(\im(f_n))\leq \cd_{\fa_0}(H^{c'}_{R_+}(X)_{n- 1})\leq \cd_{\fa_0R}(H^{c'}_{R_+}(X))= c- c'.$$
So, using $(2)$, we get the following exact sequence
$$\hspace{3.3cm}H^{c- c'}_{\fa_0}( H^{c'}_{R_+}(X)_{n- 1})\longrightarrow H^{c- c'}_{\fa_0}( H^{c'}_{R_+}(X)_{n})\longrightarrow 0. \hspace{4cm}(3)$$
Since $ \cd_{\fa_0R}(H^{c'}_{R_+}(X))= c- c'$, hence $\exists n_0\in \Z$ such that $H^{c- c'}_{\fa_0}( H^{c'}_{R_+}(X)_{n_0})\neq 0$. This, in conjunction with \ref{cd}(ii), $(1)$ and  $(3)$ implies that
$$H^{c+ p}_{\fa} (M, N)_n\cong H^{c}_{\fa} (X)_n\cong H^{c- c'}_{\fa_0}( H^{c'}_{R_+}(X)_{n})\neq 0$$
   for all $n\leq n_0$.
\end{proof}

The following corollary implies a tameness property of the ordinary local cohomology modules at the top level. It is directly concluded by \ref{tame}(iii).
\begin{cor}
 If $\Gamma_{R_+}(N)\neq N$ and $N$ is relative Cohen-Macaulay with respect to $R_+$, then
$H^{\cd_{\fa}(N)}_{\fa}(N)_n\neq 0$ for all $n\ll 0$.
\end{cor}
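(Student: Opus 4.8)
The plan is to derive this corollary directly from Theorem \ref{tame}(iii) by specializing $M = R$, so that the generalized local cohomology $H^i_{\fa}(R, N)$ becomes the ordinary local cohomology $H^i_{\fa}(N)$. First I would set $M := R$, so that $\pd_R(M) = 0$, i.e. $p = 0$ in the notation of \ref{cd}. Then $\Ext^0_R(M, N) = \Hom_R(R, N) \cong N$ and $\Ext^j_R(R, N) = 0$ for $j > 0$, so the quantity $c := \max\{\cd_{\fa}(\Ext^i_R(M, N)) \mid 0 \le i \le p\}$ reduces to $c = \cd_{\fa}(N)$. With $p = 0$, the level $c + p$ appearing in \ref{tame} is exactly $\cd_{\fa}(N)$, and part (iii) of that theorem applies to the module $X := \Ext^p_R(M, N) = \Ext^0_R(R, N) \cong N$.

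Next I would verify that the hypotheses of \ref{tame}(iii) translate into the stated hypotheses of the corollary. The theorem requires $X$ to be relative Cohen-Macaulay with respect to $R_+$ with $\cd_{R_+}(X) > 0$; since $X \cong N$, relative Cohen-Macaulayness of $X$ with respect to $R_+$ is precisely the assumption that $N$ is relative Cohen-Macaulay with respect to $R_+$. The condition $\cd_{R_+}(N) > 0$ is equivalent to $\Gamma_{R_+}(N) \neq N$: indeed $\cd_{R_+}(N) = 0$ means $H^i_{R_+}(N) = 0$ for all $i > 0$ and hence, since $N$ is finitely generated and relative Cohen-Macaulay, forces $N$ to be $R_+$-torsion, i.e. $\Gamma_{R_+}(N) = N$; conversely if $\Gamma_{R_+}(N) = N$ then all higher $H^i_{R_+}(N)$ vanish, so $\cd_{R_+}(N) = 0$. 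Thus the assumption $\Gamma_{R_+}(N) \neq N$ is exactly $\cd_{R_+}(N) > 0$, and both hypotheses of \ref{tame}(iii) are met.

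Finally, applying \ref{tame}(iii) to $M = R$ and $N$ yields $H^{c+p}_{\fa}(M, N)_n = H^{\cd_{\fa}(N)}_{\fa}(R, N)_n \neq 0$ for all $n \ll 0$. Since $H^i_{\fa}(R, N) \cong H^i_{\fa}(N)$ canonically as graded modules (which follows from Remark \ref{seq}(iii) with $X = R$, or simply from the definition of generalized local cohomology), this reads $H^{\cd_{\fa}(N)}_{\fa}(N)_n \neq 0$ for all $n \ll 0$, which is the assertion. I do not anticipate a serious obstacle here; the only point requiring a little care is the equivalence $\Gamma_{R_+}(N) \neq N \iff \cd_{R_+}(N) > 0$ under the relative Cohen-Macaulay hypothesis, and even this is immediate once one recalls that $\cd_{R_+}(N) = 0$ is equivalent to $N$ being $R_+$-torsion. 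One should also note in passing that $\cd_{\fa}(N) = c + p$ in this case, since $p = 0$ and $c = \cd_{\fa}(N)$ trivially, so the superscript in the conclusion is unambiguous.
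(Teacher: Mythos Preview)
Your proposal is correct and follows exactly the paper's approach: the paper says only that the corollary ``is directly concluded by \ref{tame}(iii)'', and your argument is precisely the specialization $M=R$, $p=0$, $X=\Ext^0_R(R,N)\cong N$, together with the verification that $\Gamma_{R_+}(N)\neq N$ is equivalent to $\cd_{R_+}(N)>0$. Your write-up supplies the details the paper leaves implicit; nothing further is needed.
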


\begin{defins}
(i) Following \cite{m}, we call a graded $R$-module $X$ to be \emph{finitely graded},
if $X_n=0$ for all but finitely many $n\in \Z$, where $X_n$ denotes the
$n$-th graded piece of $X$.

Also, we set
$$\upsilon_{\fa}(M, N):= \sup\{k\in\N_0\mid H_{\fa}^i(M,N)\ \mbox{is
finitely graded for all}\ i<k\}.$$

(ii) The $R_+$ finiteness dimension of $M$ and $N$ with respect to $\fa$, is defined to be
$$f_{\fa}^{R_+}(M,N):=\sup\{k\in\N_0\mid R_+\subseteq\sqrt{0:_R
H_{\fa}^i(M,N)}\ \mbox{for all}\ i<k\}.$$

\end{defins}

In the rest of this section we are going to show that
$\upsilon_{\fa}(M, N)= f_{\fa}^{R_+}(M,N)$, in the case where $\pd_R(M)< \infty$. Then, as a consequence, we can prove that there exists a finite subset $X$ of $\Spec(R_0)$ such that $\Ass_{R_0}(H_{\fa}^{f_{\fa}^{R_+}(M,N)}(M,N)_n)= X$ for all $n\ll 0$, which generalize \cite[3.4]{jz}.

To this end, we need to remind some results from \cite{m}.

\begin{lem}\label{m1}
Let $X$ be a finitely graded $R$-module. Then $R_+\subseteq \sqrt{0:_R X}$. Furthermore, if $X$ is finitely generated, then the converse is true.
\end{lem}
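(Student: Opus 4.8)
\textbf{Proof plan for Lemma \ref{m1}.}

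The plan is to prove the two statements separately, starting with the direction that holds for arbitrary (not necessarily finitely generated) finitely graded modules. First I would observe that if $X$ is finitely graded, then $X_n = 0$ for all $n$ outside some finite interval, say for all $|n| > N$. Since $R$ is standard graded, $R_+$ is generated in degree $1$, so for any homogeneous $x \in (R_+)_k$ with $k \geq 1$ and any homogeneous element $\xi \in X_n$, the product $x\xi$ lies in $X_{n+k}$. Iterating, $x^t \xi \in X_{n + tk}$, and once $n + tk > N$ this forces $x^t\xi = 0$. Thus every homogeneous element of $R_+$ is nilpotent on every homogeneous element of $X$; choosing a finite homogeneous generating set $x_1,\dots,x_r$ of $R_+$ in degree $1$ and a uniform exponent $t$ with $t > 2N$ (or arguing element-wise and using that $R_+$ is finitely generated), one gets $R_+^{s} X = 0$ for $s$ large enough, hence $R_+ \subseteq \sqrt{0:_R X}$.

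For the converse, assume in addition that $X$ is finitely generated and that $R_+ \subseteq \sqrt{0 :_R X}$. Then there is an integer $s$ with $R_+^s X = 0$, i.e. $X$ is annihilated by a power of $R_+$, so $X$ is an $R_+$-torsion module: $X = \Gamma_{R_+}(X)$. Now I would invoke the fact that a finitely generated $R_+$-torsion graded module over a standard graded ring is finitely graded. Concretely: $X$ is generated by finitely many homogeneous elements $\xi_1,\dots,\xi_m$ of degrees $d_1,\dots,d_m$, and since $R_+^s$ kills $X$, every element of $X$ is an $R_0$-linear combination of products $y\,\xi_j$ with $y$ a monomial in $R_+$ of degree $< s$; such a product has degree at most $\max_j d_j + s - 1$, so $X_n = 0$ for $n > \max_j d_j + s - 1$. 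For the lower bound, $X_n \subseteq \sum_j R_{n - d_j}\xi_j = 0$ once $n < \min_j d_j$ (as $R$ is nonnegatively graded). Hence $X_n = 0$ for all but finitely many $n$, i.e. $X$ is finitely graded.

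The argument is essentially formal once the standard-gradedness of $R$ and the finite generation of $R_+$ and of $X$ are used; there is no real obstacle. The only point needing minor care is the passage from ``every homogeneous element of $R_+$ is nilpotent on $X$'' to ``a single power $R_+^s$ annihilates $X$'' in the first part — but this is immediate from the finite generation of $R_+$ (take $s$ larger than the sum of individual nilpotency exponents of a finite generating set). I would present the two implications in the order above, flagging that the first does not require finite generation of $X$ and the converse does.
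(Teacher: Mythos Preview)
Your argument is correct and complete. The paper itself does not supply a proof of this lemma; it simply cites \cite[2.1]{m} (Marley). Your direct argument---using that $R_+^s$ lives in degrees $\geq s$ to kill a finitely graded module, and conversely bounding the degrees of a finitely generated $R_+$-torsion module via its generators---is exactly the standard elementary proof one finds behind that citation, so there is nothing to contrast.
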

\begin{proof}
See \cite[2.1]{m}.
\end{proof}

\begin{lem}\label{mm}
Let $X$ be a finitely graded $R$-module. Then $H^i_{\fb}(X)$ is finitely graded for all $i\in \N_0$ and all homogenous ideal $\fb$ of $R$.
\end{lem}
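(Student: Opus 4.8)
Let $X$ be a finitely graded $R$-module. Then $H^i_{\fb}(X)$ is finitely graded for all $i\in \N_0$ and all homogeneous ideal $\fb$ of $R$.

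My plan is to reduce the problem to the graded pieces and exploit the fact that a finitely graded module, being concentrated in finitely many degrees, can be killed by a power of $R_+$. First I would note that since $X_n = 0$ for all but finitely many $n$, say $X_n = 0$ whenever $n < a$ or $n > b$, there is an integer $t$ with $R_+^t X = 0$ (take $t = b - a + 1$, since each homogeneous element of $R_+^t$ raises degree by at least $t$). In particular $X$ is an $R/R_+^t$-module, hence an $(R_0[R_+]/R_+^t)$-module; more to the point, $X$ is annihilated by $R_+^t$, so each $H^i_{\fb}(X)$ is also annihilated by $R_+^t$, since local cohomology of a module over $R/\fc$ with $\fc$ in the annihilator is again an $R/\fc$-module (functoriality of $H^i_{\fb}(-)$ in the module argument, together with $H^i_{\fb}(X) \cong H^i_{\fb R/\fc}(X)$ for $X$ an $R/\fc$-module by independence of base). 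Thus $R_+ \subseteq \sqrt{0 :_R H^i_{\fb}(X)}$ automatically — but this alone only gives the weaker conclusion of \ref{m1}, not finite gradedness, unless we also know finite generation, which we do not.

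So the main work is genuinely to control the degrees. The cleanest route is to compute $H^i_{\fb}(X)$ using the \v{C}ech (or Koszul) complex on a finite homogeneous generating set $x_1,\dots,x_r$ of $\fb$. Write $\fb = \fb_0 + \fb'$ where the $x_j$ of positive degree and those in $R_0$ are separated; actually it is simpler to argue directly: $H^i_{\fb}(X)$ is the $i$-th cohomology of the complex $\check{C}^\bullet(x_1,\dots,x_r) \otimes_R X$, whose terms are finite direct sums of localizations $X_{x_{j_1}\cdots x_{j_s}}$. I would show each such localization is again finitely graded. For a homogeneous element $y \in R$ of degree $d$, the localization $X_y = \varinjlim (X \xrightarrow{\cdot y} X(d) \xrightarrow{\cdot y} X(2d) \to \cdots)$: if $d = 0$, then $X_y$ lives in the same finitely many degrees as $X$ and is finitely graded; if $d > 0$, then multiplication by $y$ shifts the degree-window down, and since $y^t \in R_+^t$ annihilates $X$, the direct limit stabilizes and indeed $X_y = 0$. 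Hence every localization at a product involving at least one positive-degree generator vanishes, and a localization at a product of degree-zero generators is finitely graded, concentrated in the original degree range. Taking finite direct sums preserves this, and passing to subquotients (kernels mod images) of a complex of finitely graded modules keeps the cohomology finitely graded, because the graded pieces outside $[a,b]$ are all zero throughout the complex.

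The one point requiring care — and the place I expect to spend the most thought — is the base-change/independence step: to use the \v{C}ech complex on generators of $\fb$ rather than worrying about whether $X$ being an $R/R_+^t$-module changes anything, and to make sure the grading conventions in the direct limit defining $X_y$ are tracked correctly (the shift $X \to X(d)$ and the colimit). But this is standard: $H^i_{\fb}(X)$ depends only on $\sqrt{\fb}$ and is computed by the \v{C}ech complex on any generating set, and the colimit description of $X_y$ is degree-compatible by construction. Once the localizations are pinned down, the conclusion is immediate: all terms of $\check{C}^\bullet \otimes X$ are finitely graded with support in a common finite set of degrees, so all cohomology modules $H^i_{\fb}(X)$ are finitely graded. (In fact the argument shows the stronger statement that $H^i_{\fb}(X)$ is concentrated in degrees $\leq \en(X)$.) This is essentially \cite[2.2]{m}, to which one may alternatively simply refer.
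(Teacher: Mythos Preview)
Your argument is correct: computing $H^i_{\fb}(X)$ via the \v{C}ech complex on homogeneous generators of $\fb$ reduces the question to showing each localization $X_y$ is finitely graded, and your case split on $\deg y$ handles this cleanly (for $\deg y>0$ each graded piece of the direct system is eventually zero, for $\deg y=0$ the degree support is unchanged). The paper itself gives no argument at all and simply cites \cite[2.2]{m}; your final sentence already anticipates this, and indeed your \v{C}ech approach is essentially Marley's.

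One small wording point: when $\deg y>0$ the direct system $X\to X(d)\to X(2d)\to\cdots$ does not literally ``stabilize'' (the transition maps need not become isomorphisms), but rather each fixed graded piece is eventually zero, which is exactly what your observation $y^tX=0$ yields and is all you need.
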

\begin{proof}
See \cite[2.2]{m}.
\end{proof}

The following lemma is a generalization of the above one. We need it to prove next theorem.

\begin{lem}\label{fg}
Let $p:= \pd_R(M)< \infty$ and $R_+\subseteq\sqrt{0:_RN}$. Then $H_{\fa}^i(M,N)$ is finitely graded
for all $i\in \N_0$.
\end{lem}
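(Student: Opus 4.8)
The plan is to reduce the statement about the two-variable functor $H^i_{\fa}(M,N)$ to the one-variable case already handled in \ref{mm}, using the Grothendieck spectral sequence that underlies \ref{cd}. First I would observe that, since $R_+\subseteq\sqrt{0:_R N}$, the module $N$ is $R_+$-torsion; more importantly, each $\Ext^j_R(M,N)$ is then a finitely generated $R$-module annihilated by some power of $R_+$ (it is a subquotient of a direct sum of copies of $N$ up to finite length considerations — more directly, $\Ext^j_R(M,N)$ is a finitely generated module with $R_+\subseteq\sqrt{0:_R\Ext^j_R(M,N)}$, since $0:_R N\subseteq 0:_R\Ext^j_R(M,N)$). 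By \ref{m1}, a finitely generated graded $R$-module $X$ with $R_+\subseteq\sqrt{0:_R X}$ is finitely graded. Hence each $\Ext^j_R(M,N)$ is finitely graded.

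Next I would invoke, exactly as in the proof of \ref{cd}, the Grothendieck spectral sequence
$$E_2^{i,j}=H^i_{\fa}(\Ext^j_R(M,N))\underset{i}{\Rightarrow}H^{i+j}_{\fa}(M,N),$$
which is available because $p=\pd_R(M)<\infty$ makes all but finitely many columns vanish (and \ref{2.4} guarantees the needed vanishing of $H^i_{\fa}$ on the injective terms). Since each $\Ext^j_R(M,N)$ is finitely graded, \ref{mm} gives that every $E_2^{i,j}=H^i_{\fa}(\Ext^j_R(M,N))$ is finitely graded. Now I would run the bookkeeping: finitely graded graded modules form a Serre subcategory of the category of graded $R$-modules (a submodule, quotient, or extension of finitely graded modules is finitely graded, since the union of finitely many finite sets of degrees is finite), so each $E_r^{i,j}$ stays finitely graded, and in particular $E_\infty^{i,j}$ is finitely graded for all $i,j$. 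Since the abutment $H^n_{\fa}(M,N)$ has a finite filtration whose successive quotients are among the $E_\infty^{i,j}$ with $i+j=n$ — only finitely many, again because $j\le p$ — and each piece is finitely graded, $H^n_{\fa}(M,N)$ is finitely graded for every $n\in\N_0$.

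The only point needing care — and the main obstacle — is the verification that "finitely graded" is stable under the operations performed: subobjects, quotients, extensions (to propagate through the pages and the finite filtration of the abutment), and the behavior of $H^i_{\fa}$ on finitely graded modules, which is precisely \ref{mm}. Both are routine once stated, but the extension-closedness is what lets one pass from the $E_2$-page to $E_\infty$ and then to $H^n_{\fa}(M,N)$; I would state it explicitly. One should also double-check that the spectral sequence is a sequence of \emph{graded} $R$-modules with homogeneous differentials, so that "finitely graded" is meaningful at each stage — this follows from the graded version of \cite[11.38]{r} used already in \ref{cd}, since $M$ has a graded free resolution and the $\Gamma_{\fa}$-acyclicity input is the graded statement \ref{2.4}.
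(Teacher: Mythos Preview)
Your proof is correct but takes a different route from the paper's. The paper argues by induction on $p=\pd_R(M)$: when $p=0$ one has $M\oplus M'\cong R^n$ for some $n$ and some finitely generated graded $M'$, so $H^i_{\fa}(M,N)$ is a graded direct summand of $(H^i_{\fa}(N))^n$, which is finitely graded by \ref{m1} and \ref{mm}; for $p>0$ one chooses a short exact sequence $0\to M'\to R^n\to M\to 0$ with $\pd_R(M')=p-1$ and uses the resulting piece $H^{i-1}_{\fa}(M',N)\to H^i_{\fa}(M,N)\to (H^i_{\fa}(N))^n$ of the long exact sequence in the first variable together with the inductive hypothesis. Your approach instead feeds the observation that each $\Ext^j_R(M,N)$ is finitely generated with $R_+\subseteq\sqrt{0:_R\Ext^j_R(M,N)}$ (hence finitely graded by the converse in \ref{m1}) into the Grothendieck spectral sequence of \ref{cd}, and then propagates ``finitely graded'' through the pages and the abutment filtration via the Serre-subcategory property. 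The paper's induction is lighter on machinery and sidesteps any discussion of the graded structure of the spectral sequence; your argument is more structural and, as a bonus, shows that the hypothesis $\pd_R(M)<\infty$ is not actually needed here---the spectral sequence is first quadrant and converges with a finite filtration on each $H^n_{\fa}(M,N)$ regardless, so the one place you invoke $p<\infty$ (to bound the nonzero columns) can simply be dropped.
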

\begin{proof}
Let $i\in \N_0$. We use induction on $p:=\pd_R (M)$ to prove the claim. First, suppose $p=0$. Then
there exist a positive integer $n$ and a finitely generated graded
$R$-module $M'$ such that $M\oplus M'\cong R^n$. Thus $H_{\fa}^i(M,N)\oplus
H_{\fa}^i(M',N)\cong H_{\fa}^i(R^n,N)\cong(H_{\fa}^i(R,N))^n=(H_{\fa}^i(N))^n$. Therefore, in the case $p= 0$, the result follows from \ref{m1} and \ref{mm}. Now, let $p>0$ and assume that the result has been proved for every
finitely generated graded $R$-module $M'$ with finite projective
dimension $p-1$. So that, there exist a positive integer $n$, a
finitely generated graded $R$-module $M'$ with projective dimension
$p-1$ and a short exact sequence $0\rightarrow M'\rightarrow
R^n\rightarrow M\rightarrow0$. It yields to the exact sequence
$H_{\fa}^{i-1}(M',N)\rightarrow H_{\fa}^i(M,N)\rightarrow
H_{\fa}^i(R^n,N)=(H_{\fa}^i(N))^n$. Now, the inductive
hypothesis together with \ref{m1} and \ref{mm} completes the proof.
\end{proof}

\begin{thm}\label{vf}
Let $\pd_R(M)< \infty$. Then $\upsilon_{\fa}(M, N)= f_{\fa}^{R_+}(M,N).$
\end{thm}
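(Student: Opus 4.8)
The plan is to prove the two inequalities $\upsilon_{\fa}(M, N)\leq f_{\fa}^{R_+}(M,N)$ and $\upsilon_{\fa}(M, N)\geq f_{\fa}^{R_+}(M,N)$ separately. The first is essentially immediate from Lemma \ref{m1}: if $i<\upsilon_{\fa}(M,N)$, then $H^i_{\fa}(M,N)$ is finitely graded, hence $R_+\subseteq\sqrt{0:_R H^i_{\fa}(M,N)}$, so by definition $i<f_{\fa}^{R_+}(M,N)$; this gives $\upsilon_{\fa}(M,N)\leq f_{\fa}^{R_+}(M,N)$ without needing $\pd_R(M)<\infty$ at all.

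**For the reverse inequality**, which is where the hypothesis $\pd_R(M)<\infty$ enters, I would argue as follows. Set $k:=f_{\fa}^{R_+}(M,N)$; I must show $H^i_{\fa}(M,N)$ is finitely graded for all $i<k$, i.e. $\upsilon_{\fa}(M,N)\geq k$. Proceed by induction on $i$ (or, equivalently, show the statement for the least $i$ where it could fail). The key tool is to pass to the submodule $N':=\Gamma_{R_+}(N)$ and the quotient $N'':=N/N'$, giving a short exact sequence $0\to N'\to N\to N''\to 0$. The point of this decomposition is that $R_+\subseteq\sqrt{0:_R N'}$ (since $N'$ is $R_+$-torsion and finitely generated), so Lemma \ref{fg} applies to the pair $(M,N')$ and tells us $H^i_{\fa}(M,N')$ is finitely graded for \emph{all} $i\in\N_0$. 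Meanwhile $N''$ is $R_+$-torsion-free, so $R_+$ contains a non-zerodivisor on $N''$; one then uses the long exact sequence in Remark \ref{seq}(iv) associated to $0\to N'\to N\to N''\to 0$ to reduce the finite-gradedness of $H^i_{\fa}(M,N)$ to that of $H^i_{\fa}(M,N')$ (handled) and $H^i_{\fa}(M,N'')$, so it suffices to treat the case $N=N''$, i.e. $\Gamma_{R_+}(N)=0$.

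**In that case**, pick a homogeneous non-zerodivisor $x\in R_+$ (of positive degree) on $N$, giving $0\to N(-d)\xrightarrow{x} N\to N/xN\to 0$ where $d=\deg x$. Since $R_+\nsubseteq\sqrt{0:_R H^i_{\fa}(M,N)}$ fails for $i<k$, and since $H^i_{\fa}(M,N)$ is $\fa$-torsion hence $R_+$-torsion in the relevant range, for $i<k$ the module $H^i_{\fa}(M,N)$ is annihilated by a power of $R_+$, so multiplication by $x^t$ kills it for some $t$; feeding this through the long exact sequence relating the $H^\bullet_{\fa}(M,-)$ of the displayed sequence, one gets that $H^i_{\fa}(M,N)$ is, up to a shift, a subquotient of $H^{i-1}_{\fa}(M,N/xN)$, and since $x$ drops the relevant homological invariant, an induction (on $i$, with the finiteness-dimension hypothesis propagating to $N/xN$) shows each such $H^i_{\fa}(M,N)$ is finitely graded. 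Combining the vanishing $H^i_{\fa}(M,N)_n=0$ for $n\gg 0$ from \ref{gen}(i) with the fact that it is also $0$ for $n\ll 0$ (obtained from the subquotient description and descending induction) yields finite-gradedness.

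**The main obstacle** I anticipate is bookkeeping the shifts and making the induction on $i$ interact cleanly with the reduction modulo $x$: one needs that $f_{\fa}^{R_+}(M,N/xN)\geq f_{\fa}^{R_+}(M,N)-1$ (so the inductive hypothesis is available for the quotient), and one must be careful that $x$ being a non-zerodivisor on $N$ does not guarantee anything about $M$, so all reductions must be performed on the second argument only — which is exactly why the $\pd_R(M)<\infty$ hypothesis is used through Lemma \ref{fg} rather than by reducing on $M$. An alternative, possibly cleaner route is to bypass the explicit $x$-induction by invoking \ref{cd}: since $\pd_R(M)<\infty$, the Grothendieck spectral sequence $H^i_{\fa}(\Ext^j_R(M,N))\Rightarrow H^{i+j}_{\fa}(M,N)$ degenerates enough in low degrees that finite-gradedness of $H^i_{\fa}(M,N)$ for $i<k$ follows from finite-gradedness of $H^i_{\fa}(\Ext^j_R(M,N))$ for the relevant $i,j$, reducing the whole problem to the classical ordinary-local-cohomology statement $\upsilon_{\fa}(X)=f_{\fa}^{R_+}(X)$ for the finitely generated modules $X=\Ext^j_R(M,N)$ — for which the argument via $\Gamma_{R_+}$ and a non-zerodivisor is the standard one.
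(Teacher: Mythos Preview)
Your proposal is correct and follows essentially the same route as the paper: the easy inequality via Lemma~\ref{m1}, the reduction to $\Gamma_{R_+}(N)=0$ using Lemma~\ref{fg} and the long exact sequence, then picking a homogeneous $N$-regular element $x\in R_+$ whose power annihilates $H^i_{\fa}(M,N)$ for $i<f_{\fa}^{R_+}(M,N)$, and inducting via $N/xN$ using $f_{\fa}^{R_+}(M,N/xN)\geq f_{\fa}^{R_+}(M,N)-1$. The only cosmetic difference is that the paper reads the long exact sequence as ``$H^{k-2}_{\fa}(M,N/xN)$ finitely graded $\Rightarrow$ multiplication by $x$ is injective on $H^{k-1}_{\fa}(M,N)_n$ for almost all $n$, which together with $x$-annihilation forces $H^{k-1}_{\fa}(M,N)_n=0$'', whereas you read it as ``$x$-annihilation $\Rightarrow$ $H^{i}_{\fa}(M,N)$ is a quotient of the finitely graded $H^{i-1}_{\fa}(M,N/xN)$''; both extract the same information from the same sequence.
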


\begin{proof}
By \ref{m1}, $\upsilon_{\fa}(M,N)\leq f_{\fa}^{R_+}(M,N)$. To prove $\upsilon_{\fa}(M,N)\geq
f_{\fa}^{R_+}(M,N)$, we make some reductions. First, using previous
lemma and the inequality $\upsilon_{\fa}(M,N)\leq f_{\fa}^{R_+}(M,N)$, we may
assume that $R_+\not\subseteq\sqrt{0:_RN}$.

Let $x\in R_+\backslash \bigcup_{\p\in \Ass_R(N)\backslash \Var(R_+)} \p$. Then, in view of \ref{m1}, it is straightforward to see that there exists $m\in \N_0$ such that $0:_M x^m= \Gamma_{(x)}(M)= \Gamma_{R_+}(M)$ is
finitely graded. Now, using the
exact sequence $0\rightarrow 0:_N x^m\rightarrow N\rightarrow
N/0:_N x^m\rightarrow 0$, \ref{seq}(iv) and \ref{fg}, we deduce that
$\upsilon_{\fa}(M,N)=\upsilon_{\fa}(M,N/0:_N x^m)$ and $f_{\fa}^{R_+}(M,N)=f_{\fa}^{R_+}(M,N/0:_Nx^m)$.
Thus, replacing $N$ with $N/ 0:_N x^m$, we may assume that $x$ is non-zerodivisor on $N$. Also, by
definition of $f_{\fa}^{R_+}(M,N)$, there exists an integer $l\geq1$
such that $x^lH_{\fa}^i(M,N)=0$ for all $i<f_{\fa}^{R_+}(M,N)$. So,
replacing $x$ with $x^l$, we may also assume that $xH_{\fa}^i(M,N)=0$ for all
$i<f_{\fa}^{R_+}(M,N)$.

Now, to prove $\upsilon_{\fa}(M,N)\geq f_{\fa}^{R_+}(M,N)$, we show, by induction on
$k$, that $0\leq k\leq \upsilon_{\fa}(M,N)$ whenever $0\leq k\leq
f_{\fa}^{R_+}(M,N)$. To do this, let $t=\deg(x)$. Then the exact sequence $0\rightarrow N\stackrel{.x}{\rightarrow}
N(t)\rightarrow (N/xN)(t)\rightarrow 0$, in conjunction with \ref{seq}(iv), yields the long exact
sequence
$$H_{\fa}^i(M,N)(t)\rightarrow H_{\fa}^i(M,N/xN)(t) \rightarrow
H_{\fa}^{i+1}(M,N)\stackrel{.x}{\rightarrow}H_{\fa}^{i+1}(M,N)(t)$$
for all $i\geq0$. This gives $(k-1\leq) f_{\fa}^{R_+}(M,N)-1\leq
f_{\fa}^{R_+}(M,N/xN)$. By inductive hypothesis, $k-1\leq
\upsilon_{\fa}(M,N/xN)$; so, by definition of $\upsilon_{\fa}(M, N/xN)$,
$H_{\fa}^{k-2}(M,N/xN)$ is finitely graded. Hence,
$0\rightarrow
H_{\fa}^{k-1}(M,N)_n\stackrel{.x}{\rightarrow}H_{\fa}^{k-1}(M,N)_{n+t}$ is
injective for all but finitely many integers $n$. Therefore, the
assumption $xH_{\fa}^{k-1}(M,N)=0$ shows that $H_{\fa}^{k-1}(M,N)$ is
finitely graded. This leads us the inequality $k\leq \upsilon_{\fa}(M,N)$.
\end{proof}

The following lemma, which is used to prove next theorem, can be proved with similar argument as used in \cite[3.3]{jz} in conjunction with \cite[3.2]{kh}.
\begin{lem}
Let $i\in \N_0$ and $n_0\in \Z$ be such that, for all $j< i$, $H_{\fa}^{j}(M,N)_n$ is a finitely generated $R_0$-module for all $n\leq n_0$. Then, for all $n\leq n_0$ and any finitely generated submodule $L$ of $H_{\fa}^{i}(M,N)_n$, the set $\Ass_{R_0}(H_{\fa}^{i}(M,N)_n/L)$ is finite.
\end{lem}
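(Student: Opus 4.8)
The statement to prove is: if $i\in\N_0$ and $n_0\in\Z$ are such that $H_{\fa}^{j}(M,N)_n$ is a finitely generated $R_0$-module for all $j<i$ and all $n\le n_0$, then for every $n\le n_0$ and every finitely generated submodule $L$ of $H_{\fa}^{i}(M,N)_n$, the set $\Ass_{R_0}(H_{\fa}^{i}(M,N)_n/L)$ is finite. The plan is to imitate the argument of \cite[3.3]{jz} (which treats the case $\fa=R_+$) but feed in the extra ingredient \cite[3.2]{kh} that controls the $R_+$-components. The key structural tool is the long exact sequence of \ref{seq}(i) with $\fb=R_+$: pick a homogeneous element $x\in R_+$ (say $x\in R_1$, after the usual harmless enlargement of the residue field of $R_0$ if needed) and use
$$\cdots\to H^{i-1}_{R_+}(M,N)_x\to H^i_{\fa}(M,N)\to H^i_{R_+}(M,N)\to\cdots$$
together with its degree-$n$ strands. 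Since $H^j_{R_+}(M,N)_n$ is always a finitely generated $R_0$-module for every $n\in\Z$ by \cite[3.2]{kh}, the localizations $(H^{i-1}_{R_+}(M,N)_x)_n$ are built from finitely many such pieces, hence their associated primes over $R_0$ form a finite set; and $H^i_{R_+}(M,N)_n$ is finitely generated over $R_0$, so it too has finitely many associated primes.

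First I would set up the reduction: it suffices to show $\Ass_{R_0}(H_{\fa}^{i}(M,N)_n/L)$ is finite for $L=0$, because $\Ass_{R_0}(H^i_{\fa}(M,N)_n/L)\subseteq\Ass_{R_0}(H^i_{\fa}(M,N)_n/L')\cup\Ass_{R_0}(L'/L)$ style inclusions, or more directly because a finitely generated submodule only removes finitely many associated primes and adds none that were not controlled; in fact the clean way is to run the whole argument with $L$ present throughout, mirroring \cite[3.3]{jz}. Next, from the long exact sequence in degree $n$ I extract a short exact sequence of $R_0$-modules
$$0\to C_n\to H^i_{\fa}(M,N)_n\to K_n\to 0,$$
where $C_n$ is a quotient of $(H^{i-1}_{R_+}(M,N)_x)_n$ and $K_n$ is a submodule of $H^i_{R_+}(M,N)_n$. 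Then $\Ass_{R_0}(H^i_{\fa}(M,N)_n/L)\subseteq \Ass_{R_0}(C_n/(C_n\cap L))\cup\Ass_{R_0}(K_n/\overline{L})$ for the appropriate images $\overline{L}$, and both terms on the right are finite: the second because $K_n\subseteq H^i_{R_+}(M,N)_n$ is finitely generated over $R_0$; the first because $(H^{i-1}_{R_+}(M,N)_x)_n=\varinjlim_t (H^{i-1}_{R_+}(M,N)_{n+t})/(\text{mult. by }x)$ is a filtered colimit of finitely generated $R_0$-modules all of whose associated primes lie in the finite set $\bigcup_{t}\Ass_{R_0}(H^{i-1}_{R_+}(M,N)_{n+t})$, which is itself finite because $H^{i-1}_{R_+}(M,N)_m=0$ for $m\gg 0$ (\ref{gen}(i), or \cite[3.2]{kh}) and each nonzero piece is finitely generated, hence has finitely many associated primes, and there are only finitely many nonzero pieces. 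The hypothesis on $j<i$ is what lets the induction (or the direct argument of \cite[3.3]{jz}) go through when $x$ fails to be a nonzerodivisor and one must pass to $N/0:_N x^m$ and $N/xN$; there one uses \ref{seq}(iv) and the finiteness of the lower cohomology components to keep all the relevant $R_0$-modules finitely generated.

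The main obstacle is bookkeeping rather than conceptual: one has to be careful that passing to $N/xN$ (to make $x$ a nonzerodivisor, so that the multiplication maps in \ref{seq}(i) behave) does not destroy the standing hypothesis that $H^j_{\fa}(-,-)_n$ is finitely generated over $R_0$ for $j<i$ and $n\le n_0$; this is handled exactly as in \cite[3.3]{jz} via the long exact sequences of \ref{seq}(iv) attached to $0\to N\xrightarrow{\cdot x}N(t)\to(N/xN)(t)\to 0$ and $0\to 0:_N x^m\to N\to N/0:_N x^m\to 0$, using that a finitely graded module has only finitely many associated primes and that an extension and a submodule/quotient of modules with finitely many associated primes again has finitely many. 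Once the reductions are in place, the finiteness of $\Ass_{R_0}(H^i_{\fa}(M,N)_n/L)$ drops out of the three-term filtration above, so no new idea beyond \cite[3.3]{jz} and \cite[3.2]{kh} is required.
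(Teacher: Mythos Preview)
Your overall plan---mimic \cite[3.3]{jz} and feed in \cite[3.2]{kh}---is exactly what the paper does (its proof is literally a one-line pointer to those two references). But the central exact sequence you write down is wrong. Taking $\fb=R_+$ and $x\in R_1\subseteq R_+$ in \ref{seq}(i) gives $\fb+xR=R_+$, so the resulting sequence involves only $H^*_{R_+}$ throughout, never $H^*_{\fa}$; the displayed string
\[
\cdots\to H^{i-1}_{R_+}(M,N)_x\to H^i_{\fa}(M,N)\to H^i_{R_+}(M,N)\to\cdots
\]
simply does not exist. To pass from $H^*_{\fa}$ to $H^*_{R_+}$ via \ref{seq}(i) one must choose $x\in\fa_0\subseteq R_0$ (degree~$0$) and iterate over a generating set $x_1,\dots,x_t$ of $\fa_0$, exactly as in the proofs of \ref{gen}(iii) and \ref{1}. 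With $x$ of degree~$0$ the component formula changes: one has $(H^{i-1}_{\fb}(M,N)_x)_n=(H^{i-1}_{\fb}(M,N)_n)_x$, the localization of a single $R_0$-module at an element of $R_0$, and your colimit-over-degree-shifts description (together with the ``only finitely many nonzero pieces'' argument it was meant to support) no longer applies. The induction on $t$ is precisely where the hypothesis that $H^j_{\fa}(M,N)_n$ is finitely generated for $j<i$ enters, to control the intermediate terms $H^{i-1}_{\fb}(M,N)_n$.

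The reductions you sketch at the end---replacing $N$ by $N/0{:}_Nx^m$ and $N/xN$ to make $x$ a nonzerodivisor ``so that the multiplication maps in \ref{seq}(i) behave''---belong to the long exact sequence of \ref{seq}(iv), not to \ref{seq}(i) (which needs no regularity assumption on $x$ at all). You have conflated two distinct mechanisms: the Mayer--Vietoris-type sequence of \ref{seq}(i) with $x\in\fa_0$, which compares cohomology with respect to different ideals, and the multiplication-by-$x$ sequence of \ref{seq}(iv) with $x\in R_+$, which shifts the cohomological index. Disentangle them and the argument of \cite[3.3]{jz} goes through verbatim with \cite[3.2]{kh} supplying the finiteness of the $R_+$-components.
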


\begin{rem}
Note that, in view of \cite[2.2(iv)]{adt} and \cite[9.1.8]{b},
\[f_{\fa}^{R_+}(M, N)\geq f_{\fa}^{R_+}(N)\geq f_{R_+}^{R_+}(N)= f_{R_+}(N).\]

Also, when $\pd_R(M)< \infty$, using \ref{vf}, one has $f_{\fa}^{R_+}(M,N)\leq g_{\fa}(M,N)$.
\end{rem}
 In view of the above remark, next theorems recover \cite[3.4 and 3.6]{jz} and \cite[5.6]{bhe}.
\begin{thm}\label{ass}
Let $(R_0, \m_0)$ be local and $\pd_R(M)< \infty$. Then for all $i\leq f_{\fa}^{R_+}(M,N)$ there exists a nonempty finite subset $X$ of $\Spec(R_0)$ such that $\Ass_{R_0}(H_{\fa}^{i}(M,N)_n)= X$ for all $n\ll 0$.
\end{thm}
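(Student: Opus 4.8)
The plan is to leverage Theorem \ref{vf} together with the preceding lemma to reduce the assertion about associated primes to the case $i < f_{\fa}^{R_+}(M,N)$ versus the critical case $i = f_{\fa}^{R_+}(M,N)$, handling them separately. For $i < f_{\fa}^{R_+}(M,N)$, Theorem \ref{vf} gives that $H^i_{\fa}(M,N)$ is finitely graded, so $H^i_{\fa}(M,N)_n = 0$ for all $n \ll 0$; thus $X = \emptyset$ works on the nose—but the statement asks for a \emph{nonempty} $X$, so the real content is the top case $i = f_{\fa}^{R_+}(M,N)$, and I should phrase the argument to cover $i \le f := f_{\fa}^{R_+}(M,N)$ uniformly by noting that for $i < f$ the set of associated primes is eventually empty while for $i = f$ it is eventually a fixed nonempty finite set. (If the intended reading is genuinely "nonempty for each such $i$", one restricts to $i = f$; I will treat the substantive case $i = f$.)

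First I would set $f := f_{\fa}^{R_+}(M,N)$ and observe, via Theorem \ref{vf} and Lemma \ref{m1}, that $H^j_{\fa}(M,N)$ is finitely graded for all $j < f$; in particular each $H^j_{\fa}(M,N)_n$ with $n \le n_0$ (for suitable $n_0 \ll 0$) vanishes, hence is a finitely generated $R_0$-module. This puts us exactly in the hypothesis of the lemma immediately preceding the theorem: for all $n \le n_0$ and every finitely generated submodule $L$ of $H^f_{\fa}(M,N)_n$, the set $\Ass_{R_0}(H^f_{\fa}(M,N)_n / L)$ is finite. Next I would invoke Theorem \ref{gen}(iii) (or rather the structure developed there) to argue that a suitable finitely generated submodule can be chosen compatibly across all $n \ll 0$: concretely, by the definition of $f$ the module $H^f_{\fa}(M,N)$ is annihilated up to radical by $R_+$ only in degrees below $f$, so $R_+ \not\subseteq \sqrt{0 :_R H^f_{\fa}(M,N)}$, and combining this with \cite[3.2]{kh}-type finiteness on the $R_+$-side should let me split off a finitely generated "tame part" whose associated primes stabilize.

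The cleanest route for stabilization itself is to combine the finiteness of $\bigcup_{n \le n_0}\Ass_{R_0}(H^f_{\fa}(M,N)_n)$—which follows from the lemma applied with $L = 0$ once we know the $H^f_{\fa}(M,N)_n$ are finitely generated, and this finiteness is exactly what Theorem \ref{gen}(ii)/(iii) supplies after the reductions in the proof of Theorem \ref{vf}—with a monotonicity/eventual-constancy argument: using the degree-$t$ multiplication maps $\cdot x \colon H^f_{\fa}(M,N)_{n} \to H^f_{\fa}(M,N)_{n+t}$ coming from the exact sequences in the proof of Theorem \ref{vf}, one shows the sets $\Ass_{R_0}(H^f_{\fa}(M,N)_n)$ form a system that is eventually stable as $n \to -\infty$. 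I would run the same kind of exact-sequence reduction used in Theorem \ref{vf} (replace $N$ by $N/0:_N x^m$ to assume $x$ is a nonzerodivisor, then use $0 \to N \xrightarrow{\cdot x} N(t) \to (N/xN)(t) \to 0$) to propagate finiteness of associated primes from $N/xN$ in cohomological degree $f-1$ up to $N$ in degree $f$, closing the induction.

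The main obstacle I anticipate is the stabilization step rather than the finiteness step: knowing that $\bigcup_{n \ll 0} \Ass_{R_0}(H^f_{\fa}(M,N)_n)$ is finite does not by itself force the individual sets to be eventually equal. The key will be to show that the maps $\cdot x$ on graded pieces are eventually injective (or eventually surjective) in the relevant range—this is where the identity $x H^i_{\fa}(M,N) = 0$ for $i < f$, established in the proof of Theorem \ref{vf}, does the work, since it makes the connecting maps in the long exact sequence behave predictably and pins down when an associated prime of degree $n$ must already appear in degree $n+t$. Once that eventual monotonic behavior of the $\Ass$ sets is in hand, together with their finiteness, the stationarity is immediate and $X$ is the common value.
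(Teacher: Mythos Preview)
Your overall strategy matches the paper's: handle $i<f:=f_{\fa}^{R_+}(M,N)$ via Theorem~\ref{vf}, and for $i=f$ invoke the preceding lemma together with the exact-sequence reductions from the proof of~\ref{vf}; the paper itself merely cites Lemma~\ref{fg}, the preceding lemma, and ``the method of \cite[3.4]{jz}''. Two points in your execution need repair. First, the preceding lemma with $L=0$ only gives that each individual $\Ass_{R_0}(H^f_{\fa}(M,N)_n)$ is finite, not that the union over all $n\le n_0$ is; and your appeal to Theorem~\ref{gen}(ii)/(iii) for finite generation of $H^f_{\fa}(M,N)_n$ is unjustified, since those parts require $i\le f-\grade(\fa,R_+,N)$ or $i\le g_{R_+}(M,N)$, neither of which is implied by $i\le f_{\fa}^{R_+}(M,N)$. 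Fortunately only finiteness of a \emph{single} $\Ass_{R_0}(H^f_{\fa}(M,N)_{n_0})$ is needed, and the lemma provides that directly.

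Second, and more seriously, the multiplication $\cdot x$ on $H^f$ is not eventually injective in general: for $n\ll 0$ its kernel in degree $n-t$ is $H^{f-1}_{\fa}(M,N/xN)_n$, and from the proof of~\ref{vf} one only knows $f-1\le f_{\fa}^{R_+}(M,N/xN)$, not strict inequality, so this module need not vanish. What actually closes the argument is induction on $f$ applied to $N/xN$: by the inductive hypothesis the set $B:=\Ass_{R_0}(H^{f-1}_{\fa}(M,N/xN)_n)$ is eventually a fixed finite set. Writing $A_n:=\Ass_{R_0}(H^f_{\fa}(M,N)_n)$, the exact sequence
\[
0\to H^{f-1}_{\fa}(M,N/xN)_n\to H^f_{\fa}(M,N)_{n-t}\xrightarrow{\ \cdot x\ }H^f_{\fa}(M,N)_n
\]
(valid for $n\ll 0$ since $H^{f-1}_{\fa}(M,N)_n=0$ there) yields the two-sided containment $B\subseteq A_{n-t}\subseteq B\cup A_n$. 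Hence $A_n\setminus B$ is decreasing as $n\to-\infty$ inside the finite set $A_{n_0}$, so it stabilizes; and since $B\subseteq A_n$ eventually, $A_n$ itself stabilizes. You gesture at propagating from $N/xN$ by induction, which is correct, but the mechanism you name (eventual injectivity of $\cdot x$) is the wrong one; the two-sided containment above is the actual engine.
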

\begin{proof}
For $i< f_{\fa}^{R_+}(M,N)$, the result is clear using \ref{vf}. And when $i= f_{\fa}^{R_+}(M,N)$, using \ref{fg} and previous lemma, one can prove the claim by employing the method of proof which used in  \cite[3.4]{jz}.
\end{proof}

\begin{lem}
Let $(R_0, \m_0)$ be local. Then for all $i\leq g_{\fa}(M,N)$, $\Gamma_{\m_0R}(H_{\fa}^{i}(M,N))$ is tame.
\end{lem}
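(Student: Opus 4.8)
The plan is to identify $\Gamma_{\m_0R}(H^i_{\fa}(M,N))$, in all sufficiently negative degrees, with the generalized local cohomology module $H^i_{\m}(M,N)$ at the graded maximal ideal $\m=\m_0R+R_+$, and then to invoke the Artinianness, hence tameness, of the latter. I would start from two elementary remarks. First, for a graded $R$-module $X$ a homogeneous element of degree $n$ is annihilated by a power of $\m_0R$ exactly when it is annihilated by the corresponding power of $\m_0$, so $\Gamma_{\m_0R}(X)_n=\Gamma_{\m_0}(X_n)$ and, more generally, $H^p_{\m_0R}(X)_n\cong H^p_{\m_0}(X_n)$ for all $p\in\N_0$. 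Second, since $\m=\m_0R+\fa$ one has $\Gamma_{\m}=\Gamma_{\m_0R}\circ\Gamma_{\fa}$, so (by \cite[11.38]{r} and \ref{2.4}, exactly as in the proof of \ref{cm}) for each $n\in\Z$ there is a Grothendieck spectral sequence of $R_0$-modules
$$(E_2^{p,q})_n=H^p_{\m_0}\bigl(H^q_{\fa}(M,N)_n\bigr)\ \underset{p}{\Rightarrow}\ H^{p+q}_{\m}(M,N)_n .$$

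Now fix $i\le g:=g_{\fa}(M,N)$. By the definition of $g_{\fa}(M,N)$, for each $q<g$ the $R_0$-module $H^q_{\fa}(M,N)_n$ has finite length for all $n\ll0$, hence is $\m_0$-torsion, hence $H^p_{\m_0}\bigl(H^q_{\fa}(M,N)_n\bigr)=0$ for all $p\ge1$, all $q<g$ and all $n\ll0$; since only the finitely many values $q\le i$ intervene, there is $n_0\in\Z$ realising all these vanishings for $n\le n_0$. Then, for $n\le n_0$, every term $(E_2^{p,q})_n$ on the anti-diagonal $p+q=i$ with $p\ge1$ vanishes (as $q=i-p\le i-1<g$), and every differential leaving $(E_r^{0,i})_n$ ($r\ge2$) lands in a subquotient of $(E_2^{r,\,i-r+1})_n$, which also vanishes (since $r\ge1$ and $i-r+1\le i-1<g$, or else $i-r+1<0$). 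Hence, for $n\le n_0$, the spectral sequence degenerates on this anti-diagonal, the filtration of $H^i_{\m}(M,N)_n$ collapses, and
$$\Gamma_{\m_0R}\bigl(H^i_{\fa}(M,N)\bigr)_n=\Gamma_{\m_0}\bigl(H^i_{\fa}(M,N)_n\bigr)=(E_2^{0,i})_n=(E_\infty^{0,i})_n\cong H^i_{\m}(M,N)_n\qquad(n\le n_0).$$

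It then remains to see that $H^i_{\m}(M,N)$ is tame. In fact it is Artinian: by \ref{2.4} there is a Grothendieck spectral sequence $H^p_{\m}(\Ext^q_R(M,N))\underset{p}{\Rightarrow}H^{p+q}_{\m}(M,N)$ whose $E_2$-terms are Artinian (local cohomology of a finitely generated module with support in the maximal ideal; see \cite{bsh}), so the abutment has a finite filtration with Artinian quotients and is Artinian. Finally, any graded Artinian module over the standard graded ring $R$ with $(R_0,\m_0)$ local is tame: replacing $R_0$ by its $\m_0$-adic completion (which affects neither Artinianness nor the vanishing of graded components, and commutes with $H^i_{\m}$ by flat base change) and applying graded Matlis duality turns it into a finitely generated graded $R$-module $Y$ whose $n$-th component is the $R_0$-dual of $Y_{-n}$; and $Y$ is tame as $m\to+\infty$ by graded Nakayama ($Y_{m+1}=R_1Y_m$ for $m\gg0$, so its nonvanishing degrees eventually form a half-line $[\,\cdot,\infty)$ or are bounded above). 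Combining this with the displayed isomorphism, $\Gamma_{\m_0R}(H^i_{\fa}(M,N))_n$ is either zero for all $n\ll0$ or nonzero for all $n\ll0$; that is, $\Gamma_{\m_0R}(H^i_{\fa}(M,N))$ is tame.

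The delicate step is the collapse in the second paragraph: one must verify that, under the hypothesis $i\le g_{\fa}(M,N)$, the finite-length (equivalently, $\m_0$-torsion) property of $H^q_{\fa}(M,N)_n$ for $q<g_{\fa}(M,N)$ and $n\ll0$ genuinely kills every $E_2$-entry off the $0$-column on the anti-diagonal $p+q=i$ together with every target of a differential out of the $(0,i)$-position. Once that is secured, the rest is just the combination of two standard facts — the Artinianness of $H^i_{\m}(M,N)$ and the tameness of graded Artinian modules — neither of which requires any finiteness hypothesis on $\pd_R(M)$.
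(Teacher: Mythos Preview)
Your argument is correct and is precisely the approach the paper indicates through its references: \cite[2.2]{z2} supplies the Artinianness of $H^i_{\m}(M,N)$, and the argument of \cite[3.5]{jz} is exactly the spectral-sequence collapse you carry out (there for $M=R$, here in the generalized setting). You have simply made explicit what the paper leaves to citation; the verification that $\Gamma_{\fa}\circ\Hom_R(M,-)$ sends injectives to $\Gamma_{\m_0R}$-acyclics via \ref{2.4}, and the tameness of graded Artinian modules via Matlis duality, are the right ingredients.
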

\begin{proof}
One can use, \cite[2.2]{z2} and the argument used in \cite[3.5]{jz} to prove the claim.
\end{proof}

\begin{thm}
Let the situation be as in \ref{ass}. Then for all $i\leq g_{\fa}(M,N)$ there exists a nonempty finite subset $X$ of $\Spec(R_0)$ such that $\Ass_{R_0}(H_{\fa}^{i}(M,N)_n)= X$ for all $n\ll 0$
\end{thm}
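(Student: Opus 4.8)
The plan is to mimic the proof of Theorem~\ref{ass}, but now control the associated primes up to the cohomological finite length dimension $g_{\fa}(M,N)$ rather than just up to $f_{\fa}^{R_+}(M,N)$. The key new ingredient is the preceding lemma, which tells us that $\Gamma_{\m_0R}(H_{\fa}^{i}(M,N))$ is tame for all $i\leq g_{\fa}(M,N)$, together with Theorem~\ref{gen}(iii), which guarantees that $H_{\fa}^{i}(M,N)_n$ is a finitely generated $R_0$-module for all $n\ll 0$ and all $i\leq g_{\fa}(M,N)$ (recall $g_{\fa}(M,N)\leq g_{R_+}(M,N)$ by \ref{g}, or one argues directly). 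Having finite generation of all the earlier components at the bottom, the lemma following Theorem~\ref{vf} applies and shows that for each $i\leq g_{\fa}(M,N)$, the set $\Ass_{R_0}(H_{\fa}^{i}(M,N)_n/L)$ is finite for every finitely generated submodule $L$; in particular, since $H_{\fa}^{i}(M,N)_n$ is itself finitely generated $R_0$-module for $n\ll 0$, the set $\Ass_{R_0}(H_{\fa}^{i}(M,N)_n)$ is finite for all $n\ll 0$.

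First I would fix $i\leq g_{\fa}(M,N)$ and work with the family of finite sets $A_n:=\Ass_{R_0}(H_{\fa}^{i}(M,N)_n)$ for $n\ll 0$. The strategy is the standard dichotomy: decompose each $A_n$ according to whether a prime equals $\m_0$ or is a non-maximal prime of $R_0$. For the non-maximal part, I would localize at a prime $\p_0\neq\m_0$ of $R_0$ and use that $H_{\fa}^{i}(M,N)_n$ localizes to $H_{\p_0 R_{\p_0}+(R_{\p_0})_+}^{i}(M_{\p_0},N_{\p_0})_n$ (flat base change, as in the proof of \ref{cm}); here $\dim(R_0)_{\p_0}<\dim R_0$, so an induction on $\dim R_0$ handles these primes and shows $A_n\setminus\{\m_0\}$ stabilizes to a fixed finite set for $n\ll 0$. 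For the maximal prime $\m_0$, membership of $\m_0$ in $A_n$ is controlled precisely by $\Gamma_{\m_0R}(H_{\fa}^{i}(M,N))_n\neq 0$, and the tameness statement of the preceding lemma guarantees that this either holds for all $n\ll 0$ or fails for all $n\ll 0$; hence whether $\m_0\in A_n$ is eventually constant. Combining the two parts, $A_n$ is eventually equal to a fixed finite subset $X$ of $\Spec(R_0)$.

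The base case $\dim R_0=0$ is immediate since then $\Spec(R_0)=\{\m_0\}$ and the only question is the tameness of $\Gamma_{\m_0R}(H_{\fa}^{i}(M,N))=H_{\fa}^{i}(M,N)$, handled by the lemma; nonemptiness of $X$ follows because $H_{\fa}^{i}(M,N)_n$, being a nonzero finitely generated $R_0$-module whenever it is nonzero for $n\ll 0$, has a nonempty set of associated primes (and if it vanishes for all $n\ll 0$ one takes $X=\{\m_0\}$ trivially by convention, or more carefully one notes the statement is about eventual behavior so $X$ may be taken nonempty in the only nontrivial range). I would also invoke the remark that $f_{\fa}^{R_+}(M,N)\leq g_{\fa}(M,N)$ to see this genuinely extends Theorem~\ref{ass}.

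The main obstacle I anticipate is bookkeeping in the inductive localization step: one must check that the hypotheses $(R_0,\m_0)$ local, $\pd_R(M)<\infty$, and $i\leq g_{\fa}(M,N)$ all descend to $R_{\p_0}$, in particular that $g_{\p_0 R_{\p_0}}(M_{\p_0},N_{\p_0})\geq i$ — i.e. that the cohomological finite length dimension does not drop under localization at a non-maximal prime. This should follow from the flat base change isomorphism $H_{\fa}^{j}(M,N)_n\otimes_{R_0}(R_0)_{\p_0}\cong H_{\p_0 R_{\p_0}+(R_{\p_0})_+}^{j}(M_{\p_0},N_{\p_0})_n$ combined with the fact that localization of a finite-length $R_0$-module stays finite length (indeed becomes zero or stays the same) — so finite length of $H_{\fa}^{j}(M,N)_n$ for $j<i$ and $n\ll 0$ passes to finite length of the localized components — but it requires care because $g$ is defined by a condition for $n\ll 0$ and one must ensure the threshold can be chosen uniformly. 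Once this descent is in place, the argument is a routine assembly of the pieces along the lines of \cite[3.6]{jz}.
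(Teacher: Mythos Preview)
Your outline---induction on $\dim R_0$, localization at $\p_0\neq\m_0$ for the non-maximal part of $\Ass_{R_0}$, and the preceding tameness lemma to settle whether $\m_0$ belongs---is precisely the modification of \cite[3.6]{jz} the paper has in mind. One correction is needed, however: the inequality from \ref{g} reads $g_{R_+}(M,N)\leq g_{\fa}(M,N)$, not the reverse, so you cannot invoke \ref{gen}(iii) in the range $g_{R_+}(M,N)<i\leq g_{\fa}(M,N)$. This is harmless, because you never actually need finite generation of $H^i_{\fa}(M,N)_n$ itself; what the lemma following \ref{vf} requires is finite generation of $H^j_{\fa}(M,N)_n$ for all $j<i$ and $n\ll 0$, and for $j<g_{\fa}(M,N)$ these components have finite $R_0$-length by the very definition of $g_{\fa}$, hence are finitely generated. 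A second small slip: upon localizing at $\p_0\neq\m_0$ the relevant ideal is $\fa_0(R_0)_{\p_0}+(R_{\p_0})_+$, not $\p_0(R_0)_{\p_0}+(R_{\p_0})_+$; your descent of $g$ then goes through cleanly because the finite-length components $H^j_{\fa}(M,N)_n$ for $j<i$ vanish after localization at any $\p_0\neq\m_0$.
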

\begin{proof}
The claim can be proved with a slight modification of \cite[3. 6]{jz}.
\end{proof}

In the rest of this section, we are going to study $H_{\fa}^{i}(M,N)$ in the case where $R_+$ is principal.

\begin{thm}
Let $(R_0, \m_0)$ be local and $y\in R_1$. Then for any ideal $\fa_0\subseteq \m_0$ and all $i\leq g_{yR}(M, N)$, $\exists n_0\in \Z$ such that $H_{yR+ \fa_0}^{i}(M,N)_{n_0}\cong H_{yR+ \fa_0}^{i}(M,N)_{n}$ for all $n\leq n_0$.
\end{thm}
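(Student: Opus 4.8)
The plan is to prove the slightly stronger statement that multiplication by $y$,
$$\cdot y\colon H^{i}_{yR+\fa_0}(M,N)_{n}\longrightarrow H^{i}_{yR+\fa_0}(M,N)_{n+1},$$
is an isomorphism for all $n\ll 0$ (here and below $\fa_0$ is read as $\fa_0R$). This immediately yields the claim: choosing $n_0$ below the threshold, the composite $\cdot y^{\,n_0-n}$ shows $H^{i}_{yR+\fa_0}(M,N)_{n}\cong H^{i}_{yR+\fa_0}(M,N)_{n_0}$ for all $n\le n_0$. Write $\fa_0=(x_1,\dots,x_s)$ with homogeneous $x_j\in R_0$, set $\mathfrak{c}^{(t)}:=yR+(x_1,\dots,x_t)R$, and argue by induction on $s$; the base case is $s=0$, i.e. $\mathfrak{c}^{(0)}=yR$.

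For the base case the point is that $yR$ is principal, so $H^q_{yR}(N)=0$ for $q\ge2$, and the graded hypercohomology spectral sequence attached to \ref{seq}(iii) (cf. \cite[11.38]{r}), $E_2^{p,q}=\Ext^p_R(M,H^q_{yR}(N))\underset{p}{\Rightarrow}H^{p+q}_{yR}(M,N)$, collapses to the two rows $q=0,1$. This gives a long exact sequence of graded $R$-modules
$$\cdots\to\Ext^i_R(M,\Gamma_{yR}(N))\to H^i_{yR}(M,N)\to\Ext^{i-1}_R(M,H^1_{yR}(N))\to\Ext^{i+1}_R(M,\Gamma_{yR}(N))\to\cdots.$$
Since $\Gamma_{yR}(N)$ is finitely generated, each $\Ext^p_R(M,\Gamma_{yR}(N))$ is finitely generated and graded, hence bounded below; combining this with the exact sequence $0\to N\to N_y\to H^1_{yR}(N)\to0$ and the boundedness below of $\Ext^\bullet_R(M,N)$ gives, compatibly with $\cdot y$, an isomorphism $H^i_{yR}(M,N)_n\cong\Ext^{i-1}_R(M,N_y)_n$ for all $n\ll 0$ (for $i=0$ one uses directly that $H^0_{yR}(M,N)=\Hom_R(M,\Gamma_{yR}(N))$ is finitely generated and graded). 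Finally, $y$ has degree $1$ and acts invertibly on $R_y$, $M_y$, $N_y$, so with $S:=(R_y)_0$ (a Noetherian ring), $W:=(M_y)_0$, $V:=(N_y)_0$ (finitely generated $S$-modules) one has $R_y\cong S[y,y^{-1}]$, $M_y\cong W\otimes_S S[y,y^{-1}]$, $N_y\cong V\otimes_S S[y,y^{-1}]$, and a double application of flat base change for $\Ext$ gives $\Ext^{i-1}_R(M,N_y)\cong\Ext^{i-1}_S(W,V)\otimes_S S[y,y^{-1}]$. Thus $\Ext^{i-1}_R(M,N_y)_n=y^{n}\,\Ext^{i-1}_S(W,V)$ with $\cdot y$ an isomorphism for every $n$, and therefore $\cdot y$ is an isomorphism on $H^i_{yR}(M,N)_n$ for all $n\ll 0$ and all $i\ge0$.

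For the inductive step, assume the assertion holds for $\mathfrak{c}^{(s-1)}$. Apply \ref{seq}(i) with $\fb=\mathfrak{c}^{(s-1)}$ and $x=x_s$, pass to $n$-th graded components, and use that $x_s$ has degree $0$, so $(H^j_{\mathfrak{c}^{(s-1)}}(M,N)_{x_s})_n=(H^j_{\mathfrak{c}^{(s-1)}}(M,N)_n)_{x_s}$; this produces, for each $n$, a five-term exact sequence
$$H^{i-1}_{\mathfrak{c}^{(s-1)}}(M,N)_n\to(H^{i-1}_{\mathfrak{c}^{(s-1)}}(M,N)_n)_{x_s}\to H^i_{\mathfrak{c}^{(s)}}(M,N)_n\to H^i_{\mathfrak{c}^{(s-1)}}(M,N)_n\to(H^i_{\mathfrak{c}^{(s-1)}}(M,N)_n)_{x_s},$$
and $\cdot y$ gives a morphism of this sequence into the one for $n+1$. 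For $i\le g_{yR}(M,N)$ the inductive hypothesis at levels $i-1$ and $i$ (both $\le g_{yR}(M,N)$) shows $\cdot y$ is an isomorphism on $H^{i-1}_{\mathfrak{c}^{(s-1)}}(M,N)_n$, on $H^i_{\mathfrak{c}^{(s-1)}}(M,N)_n$ and on their localizations at $x_s$, for $n\ll 0$; the five lemma then forces $\cdot y\colon H^i_{\mathfrak{c}^{(s)}}(M,N)_n\to H^i_{\mathfrak{c}^{(s)}}(M,N)_{n+1}$ to be an isomorphism for $n\ll 0$. Taking $t=s$ completes the induction, and hence the proof.

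The main obstacle is the base case: installing the two-row spectral sequence (equivalently, the displayed exact sequence) and, above all, making the change-of-rings identification $N_y\cong V\otimes_S S[y,y^{-1}]$, which is what converts the a priori intractable non-finitely generated module $N_y/N=H^1_{yR}(N)$ into something whose $\Ext$ with $M$ is degreewise constant. Once this is in place the induction on the number of generators of $\fa_0$ is formal; the only care required there is to check that all the identifications used are natural enough to commute with the action of $y$, so that the five lemma may legitimately be applied to the $\cdot y$-ladder.
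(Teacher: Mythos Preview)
Your proof is correct and follows the same overall scaffold as the paper's: induction on the number of generators of $\fa_0$, using the long exact sequence of \ref{seq}(i) together with a $\cdot y$-ladder and the five lemma in the inductive step. The genuine differences lie in the base case and in how the inductive step is organized. For the base $\fa_0=0$, the paper takes the shorter route: after killing $\Gamma_{yR}(N)$ it applies \ref{seq}(iv) to $0\to N(-1)\stackrel{\cdot y}{\to}N\to N/yN\to 0$ and observes via \ref{seq}(iii) that $H^j_{yR}(M,N/yN)\cong\Ext^j_R(M,N/yN)$ is finitely generated, hence vanishes in low degrees, so $\cdot y$ is an isomorphism on $H^i_{yR}(M,N)_n$ for $n\ll 0$. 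Your spectral-sequence/Laurent-polynomial computation reaches the same conclusion but is considerably more elaborate; note also that your sequence $0\to N\to N_y\to H^1_{yR}(N)\to 0$ is exact on the left only when $\Gamma_{yR}(N)=0$, though this is harmless since $\Ext_R^\bullet(M,\Gamma_{yR}(N))$ is also bounded below. In the inductive step the paper invokes the hypothesis $i\le g_{yR}(M,N)$ (via \ref{g}) to force $(H^{i-1}_{\mathfrak c^{(s-1)}}(M,N)_n)_{x_s}=0$ for $n\ll 0$, reducing to a three-term diagram, whereas you keep the full five-term sequence and apply the five lemma directly. A pleasant byproduct of your route is that the bound $i\le g_{yR}(M,N)$ is never actually used: your base case holds for all $i\ge 0$, and the five-lemma step propagates this through the induction, so your argument in fact establishes the conclusion for every $i$.
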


\begin{proof}
Let $\fa_0= (x_1,\cdots, x_t)$. We use induction on $t$. Using \ref{seq}(iii), $\exists n_0\in \Z$ such that $H^i_{yR}(M, \Gamma_{yR}(N))_n\cong \Ext^i_R(M, \Gamma_{yR}(N))_n= 0$ for all $n\leq n_0$. So the exact sequence $0\longrightarrow \Gamma_{yR}(N)\longrightarrow N\longrightarrow N/ \Gamma_{yR}(N)\longrightarrow 0$ implies that $H_{yR}^{i}(M,N)_{n}\cong H_{yR}^{i}(M, N/ \Gamma_{yR}(N))_{n}$ for all $n\leq n_0$. Therefore, we can assume that $y$ is a non-zero divisor on $N$. Now, in view of \ref{seq}(iii) and the exact sequence $$H^{i- 1}_{yR}(M, N/ yN)_n\longrightarrow H^{i}_{yR}(M, N)_{n- 1}\stackrel{.y}{\longrightarrow} H^{i}_{yR}(M, N)_n \longrightarrow H^{i}_{yR}(M, N/ yN)_n,$$
for all $i\in \N_0$ there exists $n_0\in \Z$ such that $H_{yR }^{i}(M,N)_{n_0}\cong H_{yR }^{i}(M,N)_{n}$ for all $n\leq n_0$. This proves the claim in the case $t= 0$.

Now let $t> 0$, $i\leq g_{yR}(M, N)$ and assume that the result has been proved for smaller values of $t$. Using \ref{g}, $i- 1< g_{yR+ (x_1,\cdots, x_{t- 1})}(M, N)$. Therefore $(H^{i-1}_{yR+ (x_1,\cdots, x_{t- 1})}(M, N)_n)_{x_t}= 0$ for all $n\ll 0$. Also, in view of inductive hypothesis $\exists n_0\in \Z$ such that $H^i_{yR+ (x_1,\cdots, x_{t- 1})}(M, N)_{n_0}\cong H^i_{yR+ (x_1,\cdots, x_{t- 1})}(M, N)_n$ for all $n\leq n_0$. Therefore, using \ref{seq}(i), one can see that there exists a completed commutative diagram with exact rows
\\
\\
$$0\longrightarrow H_{yR+ \fa_0}^{i}(M,N)_n\longrightarrow H^i_{yR+ (x_1,\cdots, x_{t- 1})}(M, N)_n\longrightarrow (H^i_{yR+ (x_1,\cdots, x_{t- 1})}(M, N)_n)_{x_t}$$
$
\hspace{3cm}  \downarrow \hspace{4cm} \downarrow \hspace{5cm} \downarrow
$
$$0\longrightarrow H_{yR+ \fa_0}^{i}(M,N)_{n_0}\longrightarrow H^i_{yR+ (x_1,\cdots, x_{t- 1})}(M, N)_{n_0}\longrightarrow (H^i_{yR+ (x_1,\cdots, x_{t- 1})}(M, N)_{n_0})_{x_t}$$

such that the last two vertical homomorphisms are isomorphism. Now, five lemma implies that $H_{yR+ \fa_0}^{i}(M,N)_n\cong H_{yR+ \fa_0}^{i}(M,N)_{n_0}$ for all $n\leq n_0$ and the result follows by induction.
\end{proof}

\begin{cor}
Let $(R_0, \m_0)$ be local, $R_+$ be principal and $\pd_R(M)< \infty$. Then for any $i\leq g_{R_+}(M, N)$ and ideal $\fa_0\subseteq \m_0$ of $R_0$, there are only a finite number of non-isomorph graded components of $H_{\fa_0 + R_+}^{i}(M,N)$.
\end{cor}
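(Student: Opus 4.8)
The plan is to bracket the graded components of $H^{i}_{\fa_0+R_+}(M,N)$ from both ends: the previous theorem already pins down their behaviour as $n\to-\infty$, and Theorem~\ref{gen}(i) will pin down their behaviour as $n\to+\infty$; sandwiched in between there are only finitely many indices, so altogether only finitely many isomorphism classes can occur. First I would reduce to the setting of the previous theorem. Since $R$ is standard graded and $R_+$ is principal, $R_+/R_+^2$ is cyclic over $R_0$; as $R_+/R_+^2$ is also generated over $R_0$ by $R_1$, this forces $R_1=R_0y$ for a single $y\in R_1$, whence $R_+=yR$ (the case $R_+=0$ being trivial). In particular $g_{R_+}(M,N)=g_{yR}(M,N)$, so for $i\leq g_{R_+}(M,N)$ the previous theorem applies with this $y$ and produces an integer $n_0\in\Z$ with
$$H^{i}_{\fa_0+R_+}(M,N)_{n}\cong H^{i}_{\fa_0+R_+}(M,N)_{n_0}\qquad\text{for all }n\leq n_0.$$
Thus all sufficiently negative components fall into a single isomorphism class.

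For the other end, I would invoke $\pd_R(M)<\infty$: by Theorem~\ref{gen}(i) together with \cite[2.5]{y} (as recorded in the remark following \ref{gen}), the number $a^{*}_{\fa_0+R_+}(M,N)=\sup\{\en(H^{j}_{\fa_0+R_+}(M,N))\mid j\in\N_0\}$ is finite. Hence there is an integer $n_1\in\Z$ such that $H^{i}_{\fa_0+R_+}(M,N)_{n}=0$ for all $n\geq n_1$, so all sufficiently positive components are zero and thus constitute the single class of the zero module.

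Combining the two ends, every graded component $H^{i}_{\fa_0+R_+}(M,N)_{n}$, $n\in\Z$, is isomorphic either to $H^{i}_{\fa_0+R_+}(M,N)_{n_0}$ (when $n\leq n_0$), to $0$ (when $n\geq n_1$), or to one of the finitely many modules $H^{i}_{\fa_0+R_+}(M,N)_{n}$ with $n_0<n<n_1$; therefore $H^{i}_{\fa_0+R_+}(M,N)$ has only finitely many pairwise non-isomorphic graded components, as claimed. There is no serious obstacle here — the corollary is essentially a repackaging of the previous theorem with \ref{gen}(i) — so the only points that need a little care are the reduction $R_+=yR$ with $y$ of degree one (so that $g_{R_+}=g_{yR}$ and the previous theorem genuinely applies) and the use of $\pd_R(M)<\infty$ to guarantee $a^{*}_{\fa_0+R_+}(M,N)<\infty$, i.e. the vanishing of all but finitely many positive components.
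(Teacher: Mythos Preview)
Your proof is correct and follows essentially the same route as the paper: the previous theorem handles $n\ll 0$, while \ref{gen}(i) together with \cite[2.5]{y} handles $n\gg 0$, leaving only finitely many intermediate components. Your explicit reduction $R_+=yR$ with $y\in R_1$ (needed to match the hypothesis of the previous theorem) is a detail the paper leaves implicit, so your argument is in fact slightly more careful.
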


\begin{proof}
In view of \cite[2.5]{y}, $H_{\fa_0 + R_+}^{i}(M,N)= 0$ for $i\gg 0$. Now, the result follows from the previous Theorem and \ref{gen}(i).
\end{proof}
\section{Tameness and Artinianness at non-minimax levels}

In this section we are going to study tameness and Artinian property of some submodules and quotient modules of $H^i_{\fa}(M, N)$ at \emph{non-minimax} levels.

In the rest of the paper, $(R_0, \m_0)$ is a local ring and $\fb_0$ will denote an ideal of $R_0$ such that $\sqrt{\fa_0+ \fb_0}= \m_0$.
\begin{defin}
A graded  $R$-module $X$ is said to be $^*$minimax, if it has a finitely generated graded submodule $Y$, such that $X/ Y$  is an Artinian $R$-module.
\end{defin}

By the following lemma, any graded submodule and any homogeneous homomorphic image of a $^*$minimax module is $^*$minimax.

\begin{lem}
Let $0\to X\to Y\to Z\to 0$  be an exact sequence of graded $R$-modules and $R$-homomorphisms. Then $Y$ is $^*$minimax if and only if $X$ and $Z$ are $^*$minimax.
\end{lem}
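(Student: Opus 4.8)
The plan is the routine three-step verification that the class of $^*$minimax graded $R$-modules is closed under graded submodules, homogeneous quotients, and extensions. Throughout I would freely use two standard facts: since $R$ is Noetherian graded, every graded submodule of a finitely generated graded $R$-module is again finitely generated graded; and the class of (graded) Artinian $R$-modules is closed under graded submodules, homogeneous quotients, and extensions.

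For the forward implication, assume $Y$ is $^*$minimax and fix a finitely generated graded submodule $Y'\subseteq Y$ with $Y/Y'$ Artinian. To handle $X$: note $X\cap Y'$ is a graded submodule of $Y'$, hence finitely generated graded, while $X/(X\cap Y')\cong (X+Y')/Y'$ is a graded submodule of the Artinian module $Y/Y'$, hence Artinian; so $X$ is $^*$minimax. To handle $Z$, identify it with $Y/X$ via the given surjection $\pi\colon Y\twoheadrightarrow Z$: the submodule $Z':=\pi(Y')=(Y'+X)/X$ is a homogeneous quotient of $Y'$, hence finitely generated graded, and $Z/Z'\cong Y/(Y'+X)$ is a homogeneous quotient of $Y/Y'$, hence Artinian; so $Z$ is $^*$minimax.

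For the reverse implication, assume $X$ and $Z$ are $^*$minimax and fix finitely generated graded submodules $X'\subseteq X$ and $Z'\subseteq Z$ with $X/X'$ and $Z/Z'$ Artinian. Choose finitely many homogeneous elements of $Y$ whose $\pi$-images generate $Z'$, let $Y_1$ be the graded submodule they generate, and put $Y_0:=X'+Y_1$, a finitely generated graded submodule of $Y$. Then $\pi(Y_0)=Z'$, so $\pi$ induces a homogeneous surjection $Y/Y_0\twoheadrightarrow Z/Z'$ whose kernel is $(X+Y_0)/Y_0\cong X/(X\cap Y_0)$; since $X'\subseteq X\cap Y_0$, this kernel is a homogeneous quotient of $X/X'$, hence Artinian. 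Therefore $Y/Y_0$ is an extension of an Artinian module by an Artinian module, hence Artinian, and $Y$ is $^*$minimax.

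The only thing requiring care is the bookkeeping in the graded category — checking that each submodule and quotient produced is genuinely homogeneous and that finite generation propagates via the Noetherian hypothesis; beyond that there is no real obstacle, the argument being the graded analogue of the classical fact that ``finitely generated-by-Artinian'' is a Serre-type subcategory for extensions, subobjects, and quotients.
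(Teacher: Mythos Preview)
Your argument is correct and complete; each step is the standard isomorphism-theorem manipulation, and your appeals to the Noetherian hypothesis (for finite generation of graded submodules) and to closure of Artinian modules under graded subobjects, quotients, and extensions are all legitimate.

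The paper, however, does not prove this lemma at all: its entire proof is the citation ``See \cite[2.1]{bn}'' to Bahmanpour--Naghipour. So rather than taking a different route, you have supplied a self-contained direct proof where the paper defers to the literature. What your write-up buys is that the reader need not consult an external reference for a fact whose verification is, as you show, a short exercise in the second and third isomorphism theorems; what the paper's citation buys is brevity and an acknowledgment that the result is already in print (in the ungraded setting, but the graded adaptation is immediate). Your version would be the preferable one in an expository text.
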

\begin{proof}
See \cite[2.1]{bn}.
\end{proof}
The following lemma is needed to prove most of the results in this section.

\begin{lem}\label{art}
Let $X$  be a graded $^*$minimax $R$-module. If $X$  is  $\fa$-torsion, then for all $j\in \N_0$,  $\Tor_j^{R_0}(R_0/ \fb_0, X)$ and  $H^j_{\fb_0R}(X)$ are Artinian $R$-modules.
\end{lem}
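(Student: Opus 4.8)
The plan is to reduce to the two assertions separately, handling first $\Tor_j^{R_0}(R_0/\fb_0, X)$ and then $H^j_{\fb_0 R}(X)$, and in each case to exploit the structure of a $^*$minimax $\fa$-torsion module. The key observation is that since $X$ is $^*$minimax, there is a finitely generated graded submodule $Y\subseteq X$ with $X/Y$ Artinian; and since $X$ is $\fa$-torsion, so are $Y$ and $X/Y$. Because $\fa = \fa_0 + R_+$ and $\sqrt{\fa_0 + \fb_0} = \m_0$, the ideal $\fa + \fb_0 R$ has radical $\m = \m_0 + R_+$, so any $\fa$-torsion module that is also $\fb_0 R$-torsion (or against which we apply $\fb_0 R$-torsion-type functors) is effectively $\m$-torsion. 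I would first record that a finitely generated graded $\fa$-torsion $R$-module $Y$ is annihilated by some power $\fa^t$, hence $Y$ is a finitely generated module over $R/\fa^t$; since $R_+^t \subseteq \fa^t$ kills $Y$ and $R/R_+^t$ is a finitely generated $R_0$-algebra that is module-finite over $R_0$, each graded piece $Y_n$ is a finitely generated $R_0$-module and $Y_n = 0$ for $n \gg 0$. Combined with $\fa_0^t Y = 0$, the module $Y_n$ has finite length over $R_0$ for each $n$, and $Y$ is thus $\m$-torsion; in fact $Y$ is Artinian as an $R$-module. Hence both $Y$ and $X/Y$ are Artinian, so $X$ itself is Artinian.

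Once $X$ is known to be Artinian (and $\fa$-torsion, hence $\m$-torsion since $\sqrt{\fa+\fb_0R}=\m$ forces every associated/support prime to be $\m$), the two conclusions follow from general facts about Artinian modules over the graded ring. For $\Tor_j^{R_0}(R_0/\fb_0, X)$: an Artinian $R$-module $X$ is, in particular, an Artinian $R_0$-module in each graded degree — more precisely one writes $X$ as a direct limit / uses that $X$ is $\m$-torsion and Artinian so that $X \cong \varinjlim$ of finite-length pieces — and then $\Tor_j^{R_0}(R_0/\fb_0, -)$ applied to an Artinian $R_0$-module yields an Artinian $R_0$-module; the graded $R$-structure is inherited, and since the whole thing is still $\m$-torsion and Artinian over $R_0$ in bounded degrees it is Artinian over $R$. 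Alternatively, and more cleanly, one computes $\Tor$ via a finite free resolution $F_\bullet$ of $R_0/\fb_0$ over the Noetherian ring $R_0$ and notes that $F_j \otimes_{R_0} X$ is a finite direct sum of copies of $X$, hence Artinian, and the subquotient $\Tor_j^{R_0}(R_0/\fb_0,X)$ of it is Artinian. For $H^j_{\fb_0 R}(X)$: since $X$ is $\fa$-torsion and $\sqrt{\fa + \fb_0 R} = \m$, one has $\Supp_R(X) \subseteq V(\m)$, so $X = \Gamma_{\m}(X) = \Gamma_{\fb_0R}(X)$ and hence $H^j_{\fb_0 R}(X) = 0$ for $j > 0$ while $H^0_{\fb_0 R}(X) = X$ is Artinian. (If one prefers not to invoke that $X$ is $\m$-torsion, one can instead use the Mayer--Vietoris/comparison sequence relating $H^\bullet_{\fb_0R}$, $H^\bullet_{\fa}$ and $H^\bullet_{\fa + \fb_0 R}$, together with $\Gamma_{\fa}(X) = X$, to conclude the same thing.)

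The main obstacle I anticipate is the careful bookkeeping needed to pass from "$^*$minimax and $\fa$-torsion" to "Artinian over $R$": one must verify that a finitely generated graded $\fa$-torsion module really is Artinian — this uses standardness of $R$ (so that $R/R_+$ is module-finite over $R_0$), the fact that $R_0$ is Noetherian local, and that $\fa_0$-torsion plus $R_0$-finite generation in each degree gives finite length in each degree, with vanishing in high degrees. Everything after that — the behavior of $\Tor_j^{R_0}(R_0/\fb_0,-)$ and of local cohomology $H^j_{\fb_0R}(-)$ on Artinian $\m$-torsion modules — is routine, using that the class of Artinian graded $R$-modules is closed under submodules, quotients, finite direct sums, and finite direct limits, and that torsion functors and $\Tor$ against finite free resolutions preserve it. I would therefore organize the write-up as: (1) $^*$minimax $+$ $\fa$-torsion $\Rightarrow$ Artinian; (2) Artinian $\Rightarrow$ $\Tor_j^{R_0}(R_0/\fb_0,X)$ Artinian, via a finite free resolution of $R_0/\fb_0$; (3) $\fa$-torsion $+$ $\sqrt{\fa+\fb_0R}=\m$ $\Rightarrow$ $X$ is $\fb_0R$-torsion, so $H^j_{\fb_0R}(X)$ vanishes for $j>0$ and equals $X$ for $j=0$.
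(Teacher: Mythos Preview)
Your argument has a genuine gap at step~(1): the claim that a finitely generated graded $\fa$-torsion $R$-module $Y$ is Artinian is false in general. You assert that ``$\fa_0^t Y = 0$ [so] the module $Y_n$ has finite length over $R_0$'', but this requires $\sqrt{\fa_0}=\m_0$, whereas the standing hypothesis is only $\sqrt{\fa_0+\fb_0}=\m_0$. For a concrete failure, take $R_0=k[[s,t]]$, $\fa_0=(s)$, $\fb_0=(t)$, $R=R_0$ (so $R_+=0$ and $\fa=\fa_0$): then $Y=R_0/(s)\cong k[[t]]$ is finitely generated and $\fa$-torsion but not Artinian. The same error recurs in step~(3): ``$\fa$-torsion $\Rightarrow$ $\Supp_R(X)\subseteq V(\m)$'' is simply wrong, since $V(\fa)$ can be strictly larger than $\{\m\}$. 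Consequently $H^j_{\fb_0R}(X)$ need not vanish for $j>0$ (in the example above, $H^1_{\fb_0}(Y)=H^1_{(t)}(k[[t]])\neq 0$).

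The paper's proof avoids this by \emph{not} trying to show $X$ is Artinian. Instead it applies the functors $\Tor_j^{R_0}(R_0/\fb_0,-)$ and $H^j_{\fb_0R}(-)$ to the exact sequence $0\to Y\to X\to X/Y\to 0$ and treats the two ends separately. The point you are missing is that these functors themselves introduce $\fb_0$-torsion: for the finitely generated piece $Y$, the module $\Tor_j^{R_0}(R_0/\fb_0,Y)$ (resp.\ $H^j_{\fb_0R}(Y)$) is both $\fa$-torsion and $\fb_0$-torsion, hence $\m$-torsion, and since $Y$ lives in finitely many degrees with each $Y_n$ finitely generated over $R_0$, one gets Artinianness (for local cohomology, use $H^j_{\fb_0R}(Y)\cong H^j_{\m}(Y)$ on the $\fa$-torsion module $Y$). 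For the Artinian piece $X/Y$ one cites \cite[2.2]{bft}. Your free-resolution argument for $\Tor$ in step~(2) is salvageable once redirected at $Y$ and $X/Y$ separately rather than at $X$; but for $H^j_{\fb_0R}$ you need the long exact sequence, since higher local cohomology of $Y$ genuinely contributes.
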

\begin{proof}
There exists a finitely generated graded submodule  $Y$ of $X$, such that  $X/ Y$ is Artinian. Now, the exact sequence
 $0\to Y\to X\to X/ Y\to 0$ induces two long exact sequences
 $$\cdots\to \Tor_j^{R_0}(R_0/ \fb_0, Y)\to \Tor_j^{R_0}(R_0/ \fb_0, X)\to \Tor_j^{R_0}(R_0/ \fb_0, X/ Y)\to \Tor_{j-1}^{R_0}(R_0/ \fb_0, Y)\to \cdots$$
 and
 $$\cdots\to H^j_{\fb_0R}(Y)\to H^j_{\fb_0R}(X)\to H^j_{\fb_0R}(X/ Y)\to H^{j+1}_{\fb_0R}(Y)\to \cdots.$$

As $Y$  is a finitely generated  $\fa$-torsion $R$-module, it is easy to see that for all $j\in \N_0$, $\Tor_j^{R_0}(R_0/ \fb_0, Y)$  and  $H^j_{\fb_0R}(Y)$ are Artinian $R$-modules. Also, by \cite[2.2]{bft},  $H^j_{\fb_0R}(X/ Y)$ and  $\Tor_j^{R_0}(R_0/ \fb_0, X/ Y)$ are Artinian. Now, the result follows from the above exact sequences.
\end{proof}

\begin{nota rem}
For any graded  $R$-modules $X$  and $Y$  set
$$t_{\fa}(X, Y):= \inf\{i\in \N_0| H^i_{\fa}(X, Y) \  \mbox{is not $^*$minimax}\}$$
and
$$ s_{\fa}(X, Y):= \sup\{i\in \N_0| H^i_{\fa}(X, Y) \  \mbox{is not $^*$minimax}\}.$$
Since any Noetherian (or Artinian) graded module is $^*$minimax, so $f_{\fa}(X, Y)\leq t_{\fa}(X, Y)$, where $f_{\fa}(X, Y):= \inf\{i\in \N_0\mid H^i_{\fa}(M, N) \ \mbox{is not finitely generated} \}$ is the finiteness dimension of $M$ and $N$ with respect to $\fa$.
\end{nota rem}

Now, we prove a lemma which will be used to prove next proposition.

\begin{lem}\label{red}
Using the above notations, the following statements hold.
\\
\textrm{(i)} $t_{\fa}(M, N/ \Gamma_{\fb_0R}(N))= t_{\fa}(M, N)$ and $s_{\fa}(M, N/ \Gamma_{\fb_0R}(N))= s_{\fa}(M, N)$;
\\
\textrm{(ii)} for any  $i\in \N_0$, $R_0/ \fb_0\otimes_{R_0}H^i_{\fa}(M, N)$  is an Artinian $R$-module if and only if $R_0/ \fb_0\otimes_{R_0}H^i_{\fa}(M, N/ \Gamma_{\fb_0R}(N))$  is.
\end{lem}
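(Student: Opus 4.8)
The statement to prove is Lemma \ref{red}, which relates the invariants $t_{\fa}$, $s_{\fa}$ and the Artinianness of $R_0/\fb_0 \otimes_{R_0} H^i_{\fa}(M,N)$ to the corresponding quantities after replacing $N$ by $N/\Gamma_{\fb_0R}(N)$. The natural approach is to exploit the short exact sequence
$$0 \to \Gamma_{\fb_0R}(N) \to N \to N/\Gamma_{\fb_0R}(N) \to 0$$
of finitely generated graded $R$-modules, feed it into the second long exact sequence of \ref{seq}(iv) (the one contravariant in the first slot is not what we want; we need the one covariant in $N$), and control the terms $H^i_{\fa}(M, \Gamma_{\fb_0R}(N))$. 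The key point is that $\Gamma_{\fb_0R}(N)$ is a finitely generated $\fb_0R$-torsion module, hence also $\fa$-torsion since $\sqrt{\fa_0 + \fb_0} = \m_0 \subseteq \sqrt{\fa}$... wait, more carefully: $\Gamma_{\fb_0R}(N)$ is $\fb_0R$-torsion, and since $\fa = \fa_0 + R_+ \supseteq R_+$, and $\sqrt{\fa_0+\fb_0} = \m_0$, one sees $\sqrt{\fa + \fb_0 R} = \sqrt{\fa_0 + \fb_0 + R_+} = \m_0 + R_+ = \m$; but that is not quite $\fa$-torsion. Instead, I would argue that $\Gamma_{\fb_0R}(N)$, being finitely generated and $\fb_0R$-torsion, is supported in $V(\fb_0R)$, and combine this with \ref{seq}(iii) to get $H^i_{\fa}(M, \Gamma_{\fb_0R}(N)) \cong H^i(\Hom_R(M, \Gamma_{\fa}(E^\bullet_{\Gamma_{\fb_0R}(N)})))$; since $\Gamma_{\fb_0R}(N)$ is also $R_+$-torsion (as it lives in $\m$-torsion range once we use $\sqrt{\fa_0+\fb_0}=\m_0$)... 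Let me instead take the cleaner route.

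**The clean route via \ref{art}.** The module $X := \Gamma_{\fb_0R}(N)$ is finitely generated and $\fb_0R$-torsion. For Lemma \ref{art} to apply, $X$ must be $\fa$-torsion and $^*$minimax. Finitely generated modules are $^*$minimax. And $X$ is $\fa$-torsion: indeed $X$ is $\fb_0R$-torsion and $R_+$-torsion simultaneously is what we'd want, but we only know $\fb_0R$-torsion. However, we do not actually need \ref{art} applied to $X$ directly for all statements; what we need is that $H^i_{\fa}(M,X)$ is $^*$minimax (indeed Artinian) for all $i$, so that it contributes nothing to $t_\fa$ or $s_\fa$, and that $R_0/\fb_0 \otimes_{R_0} H^i_{\fa}(M,X)$ is Artinian. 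To get this, I would first reduce using \ref{seq}(iii): since one can show $X = \Gamma_{\fb_0R}(N)$ is in fact $\fa$-torsion when restricted to the relevant context — concretely, $\Supp_R(X) \subseteq \Supp_R(N) \cap V(\fb_0R)$, and using $\sqrt{\fa_0+\fb_0}=\m_0$ one gets $V(\fb_0 R) \cap V(R_+) = V(\m)$, but that's not enough. The honest fix: apply \ref{seq}(iii) to conclude $H^i_{\fa}(M, X) \cong H^i(\Hom_R(M,\Gamma_\fa(E^\bullet_X)))$ and observe that $\Gamma_\fa(E^\bullet_X)$ involves only injective envelopes $^*E_R(R/\p)$ with $\p \supseteq \fa$ and $\p \in \Supp X \subseteq V(\fb_0 R)$, hence $\p \supseteq \fa + \fb_0 R \supseteq \m$, so $\p = \m$; thus $\Gamma_\fa(E^\bullet_X) = \Gamma_\m(E^\bullet_X)$ is a complex of copies of $^*E_R(R/\m)$, each of which is Artinian, so $H^i_{\fa}(M,X)$, being a subquotient of $\Hom_R(M, \text{(Artinian)})$ with $M$ finitely generated, is Artinian. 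Then $R_0/\fb_0 \otimes_{R_0} H^i_{\fa}(M,X)$ is Artinian as a homomorphic image of an Artinian module.

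**Assembling the proof.** For part (i): the long exact sequence $\cdots \to H^i_{\fa}(M, X) \to H^i_{\fa}(M, N) \to H^i_{\fa}(M, N/X) \to H^{i+1}_{\fa}(M, X) \to \cdots$ together with the Artinianness (hence $^*$minimaxness) of all $H^j_{\fa}(M,X)$ and Lemma \ref{red}'s preceding lemma (submodules/quotients/extensions of $^*$minimax are $^*$minimax) shows $H^i_{\fa}(M,N)$ is $^*$minimax iff $H^i_{\fa}(M, N/X)$ is, for every $i$; taking infima and suprema gives $t_\fa(M,N) = t_\fa(M, N/X)$ and $s_\fa(M,N) = s_\fa(M, N/X)$. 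For part (ii): tensor the long exact sequence with $R_0/\fb_0$ over $R_0$ to get the six-term exact fragment involving $\Tor^{R_0}_1(R_0/\fb_0, H^i_{\fa}(M,N/X))$, $R_0/\fb_0 \otimes H^i_{\fa}(M,X)$, $R_0/\fb_0 \otimes H^i_{\fa}(M,N)$, $R_0/\fb_0 \otimes H^i_{\fa}(M,N/X)$; the outer terms are Artinian (the $\Tor_1$ term because $H^i_\fa(M,X)$ is Artinian $\fa$-torsion and one invokes \ref{art}, or more directly because it sits between two Artinian modules in the $\Tor$ sequence for $0\to X\to N \to N/X\to 0$... I would phrase this via \ref{art} applied to get $\Tor^{R_0}_j(R_0/\fb_0, H^i_\fa(M,X))$ Artinian), so $R_0/\fb_0 \otimes H^i_\fa(M,N)$ is Artinian iff $R_0/\fb_0 \otimes H^i_\fa(M, N/X)$ is. The main obstacle is the bookkeeping around why $H^i_\fa(M, \Gamma_{\fb_0R}(N))$ and its $\Tor$'s are Artinian — the cleanest argument is the support/injective-resolution analysis above, using $\sqrt{\fa_0+\fb_0} = \m_0$ to force the only relevant prime to be $\m$; I would spell that out carefully and then the rest is a routine chase of two long exact sequences.
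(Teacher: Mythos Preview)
Your approach is essentially the paper's. Both start from the short exact sequence $0\to \Gamma_{\fb_0R}(N)\to N\to N/\Gamma_{\fb_0R}(N)\to 0$, feed it into the long exact sequence of generalized local cohomology, and reduce everything to the fact that $H^i_{\fa}(M,\Gamma_{\fb_0R}(N))$ is Artinian for all $i$. The only real difference is in how that Artinianness is obtained: the paper observes $\sqrt{\fa+\fb_0R}=\m$ and cites \cite[2.2]{z2} to get $H^i_{\fa}(M,\Gamma_{\fb_0R}(N))\cong H^i_{\m}(M,\Gamma_{\fb_0R}(N))$, which is Artinian; you instead argue directly on the minimal $^*$injective resolution, noting that the surviving indecomposable summands after applying $\Gamma_{\fa}$ must be $^*E_R(R/\m)$. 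Both routes are correct and amount to the same underlying fact.

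For part (ii) the paper does precisely what you sketch at the end: it splits the map $\eta\colon H^i_{\fa}(M,N)\to H^i_{\fa}(M,N/\Gamma_{\fb_0R}(N))$ into the two short exact sequences $0\to\ker\eta\to H^i_{\fa}(M,N)\to\im\eta\to 0$ and $0\to\im\eta\to H^i_{\fa}(M,N/\Gamma_{\fb_0R}(N))\to\coker\eta\to 0$, tensors each with $R_0/\fb_0$, and invokes Lemma~\ref{art} on the Artinian $\fa$-torsion modules $\ker\eta$ and $\coker\eta$. Your phrase ``tensor the long exact sequence'' and the stray mention of $\Tor_1^{R_0}(R_0/\fb_0,H^i_{\fa}(M,N/X))$ are imprecise --- the relevant $\Tor_1$ term is $\Tor_1^{R_0}(R_0/\fb_0,\coker\eta)$ --- but this is a bookkeeping slip in a sketch, not a genuine gap; your closing remark about a ``routine chase of two long exact sequences'' shows you have the right picture.
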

\begin{proof}
The exact sequence  $0\to \Gamma_{\fb_0R}(N)\to N\to N/ \Gamma_{\fb_0R}(N)\to 0$ induces the long exact sequence
$$H^i_{\fa}(M, \Gamma_{\fb_0R}(N))\to H^i_{\fa}(M, N) \stackrel{\eta}{\to} H^i_{\fa}(M, N/ \Gamma_{\fb_0R}(N))\to H^{i+1}_{\fa}(M, \Gamma_{\fb_0R}(N)) $$
of generalized local cohomology modules.
As  $\sqrt{\fb_0+\fa}= \m_0+ R_+=\m$ is the graded maximal ideal of $R$, in view of \cite[2.2]{z2}, $H^i_{\fa}(M, \Gamma_{\fb_0R}(N))\cong H^i_{\m}(M, \Gamma_{\fb_0R}(N))$ is Artinian for all  $i\in \N_0$. Thus the above exact sequence implies that  $t_{\fa}(M, N/ \Gamma_{\fb_0R}(N))= t_{\fa}(M, N)$ and  $s_{\fa}(M, N/ \Gamma_{\fb_0R}(N))= s_{\fa}(M, N)$, which proves (i), and that $\ker(\eta)$  and $\coker(\eta)$  are Artinian $\fa$-torsion $R$-modules. Now, consider exact sequences
$$0\to \ker(\eta)\to H^i_{\fa}(M, N)\to \im(\eta)\to 0\ \ \mbox{
and}\ \ \
0\to \im(\eta)\to H^i_{\fa}(M, N/\Gamma_{\fb_0R}(N))\to \coker(\eta)\to 0$$
to get the following exact sequences
$$R_0/ \fb_0\otimes_{R_0} \ker(\eta)\to R_0/ \fb_0\otimes_{R_0}H^i_{\fa}(M, N) \to R_0/ \fb_0\otimes_{R_0}\im(\eta)\to 0$$
and
$$\Tor_1^R(R_0/ \fb_0, \coker(\eta))\to R_0/ \fb_0\otimes_{R_0}\im(\eta)\to R_0/ \fb_0\otimes_{R_0}H^i_{\fa}(M, N/ \Gamma_{\fb_0R}(N))\to R_0/ \fb_0\otimes_{R_0} \coker(\eta)\to 0.$$
By Lemma \ref{art} all ended modules of these sequences are Artinian. So, $R_0/ \fb_0\otimes_{R_0}H^i_{\fa}(M, N)$  is Artinian if and only if   $R_0/ \fb_0\otimes_{R_0}\im(\eta)$ is Artinian and this is, if and only if  $H^i_{\fa}(M, N/ \Gamma_{\fb_0R}(N))$ is Artinian.
\end{proof}

\begin{prop}\label{gam}
Let $j\leq t_{\fa}(M, N)$. Then
   $H^i_{\fb_0R}(H^j_{\fa}(M, N))$ is  Artinian for $i=0, 1$.
\end{prop}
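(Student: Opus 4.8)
The strategy is to reduce to the case where $N$ is $\fb_0R$-torsion-free and then analyze the low-degree local cohomology directly. First I would invoke Lemma \ref{red}(i) to replace $N$ by $N/\Gamma_{\fb_0R}(N)$: this preserves $t_{\fa}(M,N)$, so it is enough to prove the claim under the additional hypothesis $\Gamma_{\fb_0R}(N)=0$. The point of this reduction is that then there is a homogeneous element of $\fb_0$ that is a nonzerodivisor on $N$; more precisely, since $\Gamma_{\fb_0R}(N)=0$ we have $\fb_0\not\subseteq\p$ for any $\p\in\Ass_R(N)$, so we may pick $x\in\fb_0$ which is $N$-regular.

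For the case $i=0$ I would use that $H^0_{\fb_0R}(H^j_{\fa}(M,N))=\Gamma_{\fb_0R}(H^j_{\fa}(M,N))$. Writing $j\le t_{\fa}(M,N)$, all the modules $H^\ell_{\fa}(M,N)$ with $\ell<j$ are $^*$minimax, and $H^j_{\fa}(M,N)$ itself need not be, so one cannot simply quote Lemma \ref{art}. Instead I would run the long exact sequence coming from $0\to N\stackrel{\cdot x}{\to}N(t)\to (N/xN)(t)\to 0$ (where $t=\deg x$), as in \ref{seq}(iv), to relate $H^{j}_{\fa}(M,N)$ and $H^{j-1}_{\fa}(M,N/xN)$. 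Since $j-1<t_{\fa}(M,N)\le t_{\fa}(M,N/xN)$ — the last inequality following from the same long exact sequence together with Lemma \ref{red}-type bookkeeping — the module $H^{j-1}_{\fa}(M,N/xN)$ is $^*$minimax, hence its $\fb_0R$-torsion part is Artinian by Lemma \ref{art}. Chasing the long exact sequence then shows $(0:_{H^j_{\fa}(M,N)}x)$ is, up to an Artinian module, a homomorphic image of that $^*$minimax module, hence $\Gamma_{\fb_0R}(H^j_{\fa}(M,N))=\bigcup_n(0:x^n)$ is $\fb_0R$-torsion, $^*$minimax, and $\fb_0R$-torsion, therefore Artinian by Lemma \ref{art} again.

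For $i=1$ I would use the standard identity $H^1_{\fb_0R}(L)$ for a module $L$ with $\Gamma_{\fb_0R}(L)$ Artinian: replacing $L=H^j_{\fa}(M,N)$ by $L/\Gamma_{\fb_0R}(L)$ changes $H^1_{\fb_0R}$ only by an Artinian module (via the long exact cohomology sequence and the fact that $H^i_{\fb_0R}$ of an Artinian module is Artinian), so we may assume $L$ is $\fb_0R$-torsion-free and pick $y\in\fb_0$ that is $L$-regular. Then $H^1_{\fb_0R}(L)$ is a quotient of $L_y/L$ (from $0\to L\to L_y\to L_y/L\to 0$, using $H^0_{\fb_0R}(L_y)=H^1_{\fb_0R}(L_y)=0$ since $L_y$ is $y$-divisible and torsion-free), so it suffices to show $L_y/L$ is Artinian; equivalently, that $L/yL$ is Artinian, since $L_y/L=\varinjlim L/y^nL$ with transition maps induced by multiplication, and an increasing union that stabilizes is Artinian once one term is — here one reduces to showing $L/yL$ Artinian. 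Finally $L/yL$ is controlled by the same regular-sequence long exact sequence on $N$: one obtains that $H^j_{\fa}(M,N)/yH^j_{\fa}(M,N)$ embeds, modulo Artinian pieces, into $H^{j+1}_{\fa}(M,N)$... which is the point where care is needed.

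**Main obstacle.** The delicate step is the $i=1$ case: showing the cokernel of multiplication by a suitable $y\in\fb_0$ on $L=H^j_{\fa}(M,N)$ is Artinian, since $H^j_{\fa}(M,N)$ is precisely the first module in the range that is allowed to fail $^*$minimaxness, so the naive "quotient of a $^*$minimax module" argument does not directly apply. I expect this is handled by combining the long exact sequence from the $N$-regular element with the fact that $H^{j-1}_{\fa}(M,N/xN)$ is $^*$minimax (hence its $\fb_0R$-torsion is Artinian), plus the observation that $L$ is $\fb_0R$-torsion after the reductions, which forces $L/yL$ to be $\fb_0R$-torsion and finitely generated-modulo-Artinian, hence Artinian by Lemma \ref{art}. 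Assembling these exact-sequence chases cleanly, keeping track of which modules are $^*$minimax versus merely "Artinian up to a $^*$minimax error", is the real content.
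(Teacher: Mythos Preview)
Your approach has real gaps, and the paper proceeds along an entirely different line.

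First, the inequality $t_{\fa}(M,N)\leq t_{\fa}(M,N/xN)$ that you invoke is false in general. From the long exact sequence associated to $0\to N\stackrel{\cdot x}{\to}N\to N/xN\to 0$ (here $x\in\fb_0\subseteq R_0$, so $\deg x=0$, not some $t>0$) one gets that $H^{\ell}_{\fa}(M,N/xN)$ sits between $H^{\ell}_{\fa}(M,N)$ and $H^{\ell+1}_{\fa}(M,N)$; this yields only $t_{\fa}(M,N/xN)\geq t_{\fa}(M,N)-1$. In particular, for $j=t:=t_{\fa}(M,N)$ the module $H^{t-1}_{\fa}(M,N/xN)$ need not be $^*$minimax, so your claim that $(0:_{H^t_{\fa}(M,N)}x)$ is a quotient of a $^*$minimax module fails. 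Even granting that, passing from $(0:x^n)$ $^*$minimax for each $n$ to the union $\Gamma_{(x)R}(H^t_{\fa}(M,N))$ being $^*$minimax is unjustified: a countable increasing union of $^*$minimax submodules is not $^*$minimax in general. Finally, your statement in the ``Main obstacle'' paragraph that $L=H^j_{\fa}(M,N)$ is $\fb_0R$-torsion after the reductions is simply wrong: $L$ is $\fa$-torsion (hence $\fa_0$-torsion), but $\sqrt{\fa_0+\fb_0}=\m_0$ in no way forces $\fa$-torsion modules to be $\fb_0$-torsion. Your $i=1$ argument then rests on this false claim together with information about $H^{t+1}_{\fa}(M,N)$, over which you have no control.

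The paper avoids all of this by a spectral-sequence corner argument. One uses the Grothendieck spectral sequence
\[
E_2^{i,j}=H^i_{\fb_0R}\bigl(H^j_{\fa}(M,N)\bigr)\ \underset{i}{\Longrightarrow}\ H^{i+j}_{\m}(M,N),
\]
whose abutment is Artinian by \cite[2.2]{z2}. For $j<t$ the $E_2$-terms are Artinian by Lemma~\ref{art}. At the row $j=t$ and columns $i=0,1$, the outgoing differentials $d_r^{i,t}:E_r^{i,t}\to E_r^{i+r,t-r+1}$ land in rows $t-r+1<t$, hence in Artinian modules, while the incoming differentials vanish (first quadrant). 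Thus each $E_r^{i,t}$ is an extension of $E_{r+1}^{i,t}$ by an Artinian module, and $E_\infty^{i,t}$, as a subquotient of the Artinian abutment, is Artinian. Climbing back from $E_\infty$ to $E_2$ gives that $H^i_{\fb_0R}(H^t_{\fa}(M,N))$ is Artinian for $i=0,1$. This is short and uses no regular-element reductions.
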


\begin{proof}
 For all $i< t:= t_{\fa}(M, N)$, the result is clear by lemma \ref{art}. So, let $i= t$ and consider the following spectral sequence
$$E_2^{i, j}:= H^i_{\fb_0R}(H^j_{\fa}(M, N))\underset{i}{\Rightarrow} H^{i+ j}_{\m}(M, N).$$
Note that $E_2^{i, j}= 0$  for all $i< 0$ and let $r\geq 2$. So, if  $i=0, 1$, then the sequence
$$0\to E_{r+ 1}^{i, t}\to E_{r}^{i, t}\stackrel{d_r^{i, t}}{\to} E_{r}^{i+ r, t- r+ 1}$$
 is exact.
In view of lemma \ref{art}, as a subquotient of $E_2^{i+ r, t- r+ 1}= H^{i+ r}_{\fb_0R}(H^{t- r+ 1}_{\fa}(M, N))$, $E_{r}^{i+ r, t- r+ 1}$ is Artinian. Now, let $r_0\geq 2$ be an integer such that
$E_{r_0+ 1}^{i, t}= E_{r_0+ 2}^{i, t}= \cdots =E_{\infty}^{i, t}$. Then, using the Artinianness of $H^{i+ t}_{\m}(M, N)$ and the fact that $E_{r_0+ 1}^{i, t}$ is a subquotient of $H^{i+ t}_{\m}(M, N)$, $E_{r_0+ 1}^{i, t}$ is Artinian. Thus, from the above exact sequence, it follows that $E_{2}^{i, t}= H^i_{\fb_0R}(H^t_{\fa}(M, N))$ is Artinian.

 \end{proof}

Now, using the above theorem, we can prove a stability result of the components of $H^{i}_{R_+}(M, N)_n$, when $n\to -\infty$.

\begin{thm}
Let $i\leq t_{R_+}(M, N)$. Then,
\\
 (i) there is a numerical polynomial $P(x)\in \mathbb{Q}[x]$  of degree less  than $i$, such that\\
 $\length_{R_0}(\Gamma_{\m_0}(H^{i}_{R_+}(M, N)_n))= P(n)$ for all $n\ll 0$, and\\
 (ii) there is a numerical polynomial  $\acute{P}(x)\in \mathbb{Q}[x]$  of degree less  than $i$, such that\\
$\length_{R_0}(0:_{H^{i}_{R_+}(M, N)_n} \m_0)= \acute{P}(n)$ for all $n\ll 0$.
\end{thm}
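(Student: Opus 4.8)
The plan is to reduce to the case where $N$ has no $\m_0R$-torsion and then exploit the Grothendieck spectral sequence $E_2^{p,q}=H^p_{\m_0R}(H^q_{R_+}(M,N))\Rightarrow H^{p+q}_{\m}(M,N)$ together with the finiteness results of Section 2. First I would invoke \ref{red}(i) (with $\fb_0=\m_0$, which satisfies $\sqrt{\fa_0+\m_0}=\m_0$ when $\fa_0\subseteq\m_0$) to replace $N$ by $N/\Gamma_{\m_0R}(N)$, noting that $t_{R_+}(M,N)$ is unchanged and that this does not affect the graded pieces $H^i_{R_+}(M,N)_n$ for $n\ll 0$ up to Artinian error terms; alternatively one works directly with the original $N$ since all the relevant kernels and cokernels are Artinian $R_0$-modules concentrated in finitely many degrees. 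The key input is that for $i\leq t_{R_+}(M,N)$, the module $H^i_{R_+}(M,N)_n$ is a finitely generated $R_0$-module for every $n\in\Z$ (this is \cite[3.2]{kh} in the range where the cohomology is $^*$minimax, hence finitely generated in each degree), so that $\length_{R_0}(\Gamma_{\m_0}(H^i_{R_+}(M,N)_n))$ and $\length_{R_0}(0:_{H^i_{R_+}(M,N)_n}\m_0)$ are finite.

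Next I would apply \ref{gam}: for $i\leq t_{R_+}(M,N)$ the modules $H^0_{\m_0R}(H^i_{R_+}(M,N))=\Gamma_{\m_0R}(H^i_{R_+}(M,N))$ and $H^1_{\m_0R}(H^i_{R_+}(M,N))$ are Artinian graded $R$-modules. An Artinian graded $R$-module over the standard graded ring $R$ has graded pieces of finite length which are eventually polynomial in $n$ as $n\to-\infty$ — indeed, by the graded structure theory (cf.\ \cite{b}, \cite{m}) an Artinian graded $R$-module $A$ satisfies $\length_{R_0}(A_n)=Q(n)$ for all $n\ll 0$ for some numerical polynomial $Q$, and the degree bound comes from the Hilbert-polynomial behaviour of such modules. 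For part (i), observe that $\Gamma_{\m_0}(H^i_{R_+}(M,N)_n)=\Gamma_{\m_0R}(H^i_{R_+}(M,N))_n$, so the polynomial $P$ is simply the eventual Hilbert polynomial of the Artinian graded module $\Gamma_{\m_0R}(H^i_{R_+}(M,N))$. For part (ii), $0:_{H^i_{R_+}(M,N)_n}\m_0$ is the degree-$n$ piece of $\Hom_{R_0}(R_0/\m_0, H^i_{R_+}(M,N))$, and since $\Gamma_{\m_0}$ of $H^i_{R_+}(M,N)_n$ already captures the $\m_0$-torsion, $0:_{H^i_{R_+}(M,N)_n}\m_0=0:_{\Gamma_{\m_0}(H^i_{R_+}(M,N)_n)}\m_0$ is again a graded piece of an Artinian module, hence eventually polynomial.

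The degree bound "less than $i$" is the genuinely delicate point and the main obstacle. I would obtain it by an induction on $i$ using the $\m_0$-torsion theory of Brodmann's filter-regular sequences in the degree-$n$ pieces, or more directly by the following device: choose a homogeneous $\fa$-filter-regular element $x\in R_1$ on $N$ (after the reduction to $N$ torsion-free over $\m_0R$, and possibly enlarging the residue field as in the proof of \ref{tame}), form the sequence $0\to N\xrightarrow{\cdot x} N(1)\to (N/xN)(1)\to 0$, and use the resulting long exact sequence of generalized local cohomology together with \ref{seq}(iv) to relate the lengths in degree $n$ and degree $n+1$; the boundary maps show that the "first difference" of $\length_{R_0}(\Gamma_{\m_0}(H^i_{R_+}(M,N)_n))$ is governed by $H^{i-1}_{R_+}(M,N/xN)_n$, whose torsion length is polynomial of degree $<i-1$ by induction, so that the original quantity is polynomial of degree $<i$. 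The base case $i=0$ is immediate since $H^0_{R_+}(M,N)$ is $^*$minimax and $\m_0$-torsion, hence Artinian, and its graded pieces are eventually zero, giving the zero polynomial, of degree $-\infty<0$. The same filtration argument handles (ii) verbatim with $\Gamma_{\m_0}(-)$ replaced by $0:_{-}\m_0$, which is exact enough on the relevant short exact sequences of Artinian modules up to controlled error. I expect the bookkeeping of which degrees "$n\ll 0$" means at each inductive step, and the compatibility of the enlargement-of-residue-field reduction with the length computation, to be the only real technical nuisances.
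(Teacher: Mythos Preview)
Your plan is essentially the paper's own argument: use Proposition~\ref{gam} to see that $\Gamma_{\m_0R}(H^i_{R_+}(M,N))$ is Artinian, invoke Kirby \cite{k} for the existence of the numerical polynomial, and obtain the degree bound by induction via multiplication by an element $x\in R_1$; part (ii) then follows because $0:_{H^i_{R_+}(M,N)}\m_0$ sits inside the Artinian module from (i).

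Two small points where you diverge from the paper and where your sketch is slightly loose. First, the reduction: the paper does \emph{not} pass to $N/\Gamma_{\m_0R}(N)$ via Lemma~\ref{red}. Instead it passes to $N/\Gamma_{R_+}(N)$, using that $H^i_{R_+}(M,\Gamma_{R_+}(N))\cong\Ext^i_R(M,\Gamma_{R_+}(N))$ is finitely generated and hence vanishes in all $n\ll 0$. The payoff is that after this reduction (and enlarging the residue field) one can choose $x\in R_1$ that is a \emph{genuine} non-zerodivisor on $N$, giving a clean short exact sequence $0\to N(-1)\to N\to N/xN\to 0$; the paper then checks $t_{R_+}(M,N/xN)\geq i-1$ from the long exact sequence and uses that $H^{i-1}_{R_+}(M,N)$ is $^*$minimax (so its components are Artinian $R_0$-modules for $n\ll 0$) to get the epimorphism $\Gamma_{\m_0}(H^{i-1}_{R_+}(M,N/xN)_n)\twoheadrightarrow \Gamma_{\m_0}(\im f_n)$ controlling the first difference. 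Your proposed $\Gamma_{\m_0R}$-reduction does not by itself produce an $N$-regular element of degree one, so you would have to fall back on a filter-regular element and carry an extra error term; the paper's route avoids this. Second, your base case: $H^0_{R_+}(M,N)=\Gamma_{R_+}(\Hom_R(M,N))$ is not ``$\m_0$-torsion''; the reason its pieces vanish for $n\ll 0$ is simply that it is a finitely generated graded $R$-module.
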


\begin{proof}
(i) Let $i\leq t_{R_+}(M, N)$.
Using \ref{gam}, $\Gamma_{\m_0R}(H^i_{R_+}(M, N))$  is an  Artinian $R$-module. So, by \cite{k} there exists a  polynomial $P(x)\in \mathbb{Q}[x]$  such that  $\length_{R_0}(\Gamma_{\m_0}(H^{i}_{R_+}(M, N)_n))= P(n)$ for all $n\ll 0$.
It remains to show that $\deg(P(x))\leq i$. Note that on use of a standard reduction argument, replacing $R_0$ with its faithful flat extension $R_0[X]_{\m_0[X]}$, we can assume that the residue field $R_0/ \m_0$ is infinite.

Also, in view of \ref{seq}(iii), $H^i_{R_+}(M, N)_n\cong H^i_{R_+}(M, N/\Gamma_{R_+}(N))_n$ for all $n\ll 0$. Hence, we may replace $N$ with $N/\Gamma_{R_+}(N)$ and assume that there exists $x\in R_1$ which is a non-zerodivisor on $N$.
Now, consider the exact sequence
$0\to N(-1)\stackrel{.x}{\to} N\to N/ xN\to 0$   to get the long exact sequence
$$H^{i-1}_{R_+}(M, N)_n \to H^{i- 1}_{R_+}(M, N/ xN)_n\stackrel{f_n}{\to}  H^i_{R_+}(M, N)_{n- 1}\stackrel{.x}{\to} H^i_{R_+}(M, N)_n,  $$
of $R_0$-modules. Which induces the exact sequence
$$\hspace{3cm} 0\to \Gamma_{\m_0}(\im(f_n))\to \Gamma_{\m_0}(H^i_{R_+}(M, N)_{n- 1})\to \Gamma_{\m_0}(H^i_{R_+}(M, N)_n), \hspace{1.5cm}(1)$$
for all $n\in \Z$ and, also, shows that $t_{R_+}(M, N/xN)\geq t_{R_+}(M, N)- 1\geq i- 1$.
The fact that $H^{i-1}_{R_+}(M, N)$ is a $^*$minimax $R$-module implies $H^{i-1}_{R_+}(M, N)_n$ is an Artinian $R_0$-module for all $n\ll 0$. So, there exists epimorphisms
$$\Gamma_{\m_0}(H^{i-1}_{R_+}(M, N/xN)_n)\to \Gamma_{\m_0}(\im(f_n))\to 0 \ \ \ \ \ \mbox{for all } \ n\ll 0.$$

This in conjunction with $(1)$ shows that
$$\length_{R_0}(\Gamma_{\m_0}(H^i_{R_+}(M, N)_{n-1}))- \length_{R_0}(\Gamma_{\m_0}(H^i_{R_+}(M, N)_n)\leq \length_{R_0}(\Gamma_{\m_0}(H^{i- 1}_{R_+}(M, N/ xN)_{n}))$$
for all $n\ll 0$.
This allows to conclude by induction on $i\leq t_{R_+}(M, N)$.
\\
(ii)
 As a submodule of  $\Gamma_{\m_0R}(H^i_{R_+}(M, N))$, the $R$-module $0:_{H^{i}_{R_+}(M, N)} \m_0$  is Artinian for all  $i\leq t_{R_+}(M, N)$. So, the polynomial $\acute{P}(x)\in \mathbb{Q}[x]$  exists again by \cite{k}. Now, (i) yields that $\deg(\acute{P}(x))\leq i$.
\end{proof}

In the rest of this section we are going to study the asymptotic behavior of $H^i_{\fa}(M, N)$ for $n\to -\infty$, when $i\geq s_{\fa}(M, N)$.

\begin{thm}
The $R$-module $R_0/ \fb_0\otimes_{R_0}H^i_{\fa}(M, N)$ is  Artinian for all $i\geq s_{\fa}(M, N)$.
\end{thm}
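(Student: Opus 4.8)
The plan is to run the same kind of argument used in \ref{gam} and in \ref{red}, but now at the \emph{top} of the non-$^*$minimax range rather than the bottom, and to pass the Artinianness through the tensor functor $R_0/\fb_0\otimes_{R_0}-$ via the spectral sequence
$$E_2^{i,j}=H^i_{\fb_0R}(H^j_{\fa}(M,N))\underset{i}{\Rightarrow}H^{i+j}_{\m}(M,N),$$
noting as before that $\sqrt{\fb_0+\fa}=\m$. First I would set $s:=s_{\fa}(M,N)$ and, using \ref{red}(ii), reduce to the case $N=N/\Gamma_{\fb_0R}(N)$, so that $\Gamma_{\fb_0R}(N)=0$; this is harmless since it changes neither $s_{\fa}(M,N)$ nor the Artinianness of $R_0/\fb_0\otimes_{R_0}H^i_{\fa}(M,N)$. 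Next I would record that for $i>s$ the module $H^i_{\fa}(M,N)$ is $^*$minimax, hence by \ref{art} (it is automatically $\fa$-torsion as a generalized local cohomology module with respect to $\fa$) every $H^{k}_{\fb_0R}(H^i_{\fa}(M,N))$ and every $\Tor_j^{R_0}(R_0/\fb_0,H^i_{\fa}(M,N))$ is Artinian; in particular $R_0/\fb_0\otimes_{R_0}H^i_{\fa}(M,N)=H^0_{\fb_0R}(H^i_{\fa}(M,N))\otimes\cdots$ — more precisely $R_0/\fb_0\otimes_{R_0}H^i_{\fa}(M,N)$ is Artinian for all $i>s$. So the only case left is $i=s$.

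For $i=s$ the strategy is a descending/spectral-sequence chase. On the one hand $H^{s+k}_{\m}(M,N)$ is Artinian for all $k$ by \cite[2.2]{z2} (since $\m=\sqrt{\fb_0+\fa}$ and $M,N$ are finitely generated), so every subquotient of it is Artinian. On the other hand, in the spectral sequence above, the only potentially non-Artinian $E_2$ terms on the line $i+j=s$ and nearby are those with $j=s$, because $E_2^{i,j}=H^i_{\fb_0R}(H^j_{\fa}(M,N))$ is Artinian for $j>s$ by the previous paragraph and for $j<s$ by \ref{art}. Thus for each $r\ge 2$ the differentials into and out of $E_r^{0,s}$ have Artinian source $E_r^{-r,s+r-1}=0$ and Artinian target $E_r^{r,s-r+1}$ (a subquotient of the Artinian $E_2^{r,s-r+1}$ when $s-r+1\ge 0$, and $0$ otherwise), so the exact sequence
$$0\to E_{r+1}^{0,s}\to E_r^{0,s}\to E_r^{r,s-r+1}$$
shows $E_r^{0,s}$ is Artinian if and only if $E_{r+1}^{0,s}$ is; iterating down from the stable term $E_\infty^{0,s}$, which is a subquotient of the Artinian $H^s_{\m}(M,N)$, I conclude $E_2^{0,s}=H^0_{\fb_0R}(H^s_{\fa}(M,N))=\Gamma_{\fb_0R}(H^s_{\fa}(M,N))$ is Artinian.

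It remains to deduce that $R_0/\fb_0\otimes_{R_0}H^s_{\fa}(M,N)$ itself is Artinian, not just its $\fb_0$-torsion part. Here I would use the reduction $\Gamma_{\fb_0R}(N)=0$: pick $x\in\fb_0$ (after the usual faithfully flat base change $R_0\to R_0[X]_{\m_0[X]}$ to make $R_0/\m_0$ infinite, and using $\sqrt{\fa_0+\fb_0}=\m_0$ together with prime avoidance) that is a nonzerodivisor on $N$, and use the long exact sequence attached to $0\to N\stackrel{.x}{\to}N\to N/xN\to 0$ via \ref{seq}(iv). Since $s_{\fa}(M,N/xN)\le s_{\fa}(M,N)$ — which follows because multiplication by $x$ is eventually an isomorphism in high cohomological degrees, more carefully from the long exact sequence and the fact that $H^i_{\fa}(M,N)$ is $^*$minimax for $i>s$ — the module $H^{s+1}_{\fa}(M,N/xN)$ is $^*$minimax and hence $R_0/\fb_0\otimes_{R_0}H^{s+1}_{\fa}(M,N/xN)$ is Artinian by the first paragraph's observation; combining this with the surjection $H^s_{\fa}(M,N)\stackrel{.x}{\to}\ker(H^{s+1}_{\fa}(M,N)\stackrel{.x}{\to}H^{s+1}_{\fa}(M,N))$-type pieces and the fact, just proved, that $\Gamma_{\fb_0R}(H^s_{\fa}(M,N))$ is Artinian, one shows $H^s_{\fa}(M,N)/xH^s_{\fa}(M,N)$ has Artinian $\fb_0$-torsion and then, by Nakayama-type/Melkersson-type arguments on the $^*$minimax-modulo-Artinian quotient, that $R_0/\fb_0\otimes_{R_0}H^s_{\fa}(M,N)$ is Artinian. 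The main obstacle is precisely this last step: controlling $R_0/\fb_0\otimes_{R_0}H^s_{\fa}(M,N)$ rather than merely $\Gamma_{\fb_0R}$ of it, i.e.\ ruling out a non-Artinian "$\fb_0$-reduced" contribution; the inductive reduction via a nonzerodivisor in $\fb_0$ together with \ref{red}(ii) and \ref{art} is designed to circumvent it, and I expect the cleanest writeup to mirror line-by-line the method of \cite[3.5]{jz} or the dual of \ref{gam}.
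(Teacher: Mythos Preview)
Your spectral sequence step contains a genuine error. You claim that $E_2^{i,j}=H^i_{\fb_0R}(H^j_{\fa}(M,N))$ is Artinian for $j<s$ ``by \ref{art}'', but \ref{art} requires $H^j_{\fa}(M,N)$ to be $^*$minimax, and for $j<s$ there is no such guarantee: $s_{\fa}(M,N)$ is a supremum, so below $s$ there may well be non-$^*$minimax cohomology. This is exactly the asymmetry with \ref{gam}: there the outgoing differentials from $E_r^{i,t}$ land in rows $j=t-r+1<t$, which \emph{are} $^*$minimax by definition of $t$; here the outgoing differentials from $E_r^{0,s}$ land in rows $j=s-r+1<s$, which you cannot control. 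So the conclusion that $\Gamma_{\fb_0R}(H^s_{\fa}(M,N))$ is Artinian is not established, and the rest of your argument leans on it.

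The paper bypasses the spectral sequence entirely at $i=s$ and runs a clean induction on $d=\dim(N)$. After the reduction of \ref{red}(ii) to $\Gamma_{\fb_0R}(N)=0$, one picks $x\in\fb_0$ regular on $N$ and from $0\to N\stackrel{.x}{\to}N\to N/xN\to 0$ extracts the short exact sequence
\[
0\to H^s_{\fa}(M,N)/xH^s_{\fa}(M,N)\to H^s_{\fa}(M,N/xN)\to 0:_{H^{s+1}_{\fa}(M,N)}x\to 0.
\]
Tensoring with $R_0/\fb_0$ gives an exact piece whose left end $\Tor_1^{R_0}(R_0/\fb_0,\,0:_{H^{s+1}_{\fa}(M,N)}x)$ is Artinian by \ref{art} (the module $0:_{H^{s+1}}x$ is $^*$minimax as a submodule of a $^*$minimax module) and whose right end $R_0/\fb_0\otimes_{R_0}H^s_{\fa}(M,N/xN)$ is Artinian by the inductive hypothesis applied to $N/xN$, which has dimension $d-1$. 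Hence the middle term is Artinian, and since $x\in\fb_0$ one has $R_0/\fb_0\otimes_{R_0}(H^s/xH^s)\cong R_0/\fb_0\otimes_{R_0}H^s$. You gesture at the nonzerodivisor trick in your last paragraph, but you never set up the induction on $\dim(N)$ that makes it work; instead you try to feed back into the flawed $\Gamma_{\fb_0R}$ statement and invoke unspecified ``Nakayama/Melkersson-type'' arguments. Drop the spectral sequence for $i=s$ and run the dimension induction directly.
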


\begin{proof}
By Lemma \ref{art}, the result is clear for all $i> s_{\fa}(M, N)=:s$. So, it remains to show that  $R_0/ \fb_0\otimes_{R_0}H^s_{\fa}(M, N)$  is Artinian. To do this we use induction on $d:= \dim(N)$.

If  $d= 0$, then $N$ is  $\fa$-torsion. So, in view of \ref{seq}(iii), for all $i\in \N_0$, $R_0/ \fb_0\otimes_{R_0} H^i_{\fa}(M, N)\cong R_0/ \fb_0\otimes_{R_0} \Ext^i_R(M, N)$ is an Artinian $R$-module. Now, let $d> 0$  and assume that the result has been proved for any finitely generated graded $R$-module $N'$  with $\dim(N')= d- 1$. In view of lemma \ref{red}(ii), it suffices to consider the case where  $\Gamma_{\fb_0R}(N)= 0$. Hence, there is an element  $x\in \fb_0$ which is a non-zero divisor on  $N$. Now, consider the exact sequence  $0\to N \stackrel{.x}{\to} N \to N/ xN \to 0$ to get the following exact sequence
$$0\to H^s_{\fa}(M, N)/ xH^s_{\fa}(M, N)\to H^s_{\fa}(M, N/ xN)\to 0:_{H^{s+ 1}_{\fa}(M, N)}x\to 0.$$
Application of the functor  $R_0/ \fb_0\otimes_{R_0} -$ to this sequence induces the exact sequence
$$\Tor_1^R(R_0/ \fb_0, 0:_{H^{s+ 1}_{\fa}(M, N)}x)\to R_0/ \fb_0\otimes_{R_0}H^s_{\fa}(M, N)/ xH^s_{\fa}(M, N)\to R_0/ \fb_0\otimes_{R_0}H^s_{\fa}(M, N/ xN).$$
As a submodule of  $H^{s+ 1}_{\fa}(M, N)$, the module $0:_{H^{s+ 1}_{\fa}(M, N)}x$  is $^*$minimax and $\fa$-torsion. So, the left term of the above sequence is Artinian, by lemma \ref{art}. Also, since $s_{\fa}(M, N/ xN)\leq s$, the right term of this sequence is Artinian, by inductive hypothesis. Thus the middle term $R_0/ \fb_0\otimes_{R_0} H^s_{\fa}(M, N)/ xH^i_{\fa}(M, N)\cong R_0/ \fb_0\otimes_{R_0}H^s_{\fa}(M, N)$ is Artinian, too.
\end{proof}

The following corollary, which generalize \cite[2.8]{DN}, is an immediate consequence of the above theorem.

\begin{cor}
For all $i\geq s_{R_+}(M, N)$ the $R$-module $H^i_{R_+}(M, N)$ is tame.
\end{cor}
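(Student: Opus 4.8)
The plan is to derive this as a special case of the preceding theorem. Apply that theorem with $\fa_0 = 0$, so that $\fa = \fa_0 + R_+ = R_+$ and $s_{\fa}(M,N) = s_{R_+}(M,N)$, and with $\fb_0 = \m_0$, which satisfies the running hypothesis $\sqrt{\fa_0 + \fb_0} = \m_0$. This gives that $A := R_0/\m_0 \otimes_{R_0} H^i_{R_+}(M, N)$ is an Artinian $R$-module for every $i \geq s_{R_+}(M, N)$.

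First I would note that $A$ is a graded $R$-module whose $n$-th component is $H^i_{R_+}(M,N)_n / \m_0 H^i_{R_+}(M,N)_n$, and that $A$ is annihilated by $\m_0 R$; hence $A$ is an Artinian graded module over $\bar R := R/\m_0 R$, which is a standard graded algebra over the field $k := R_0/\m_0$. The key point is then that any Artinian graded module over such a ring is tame. Indeed, since $\bar R$ has a unique graded maximal ideal $\bar R_+$, graded Matlis duality identifies $A$ with $B^\vee$ for a finitely generated graded $\bar R$-module $B$, where $(B^\vee)_n = \Hom_k(B_{-n}, k)$. As the Hilbert function $n \mapsto \dim_k B_n$ of a finitely generated graded module over a standard graded $k$-algebra agrees with a polynomial for $n \gg 0$, either $B_n = 0$ for all $n \gg 0$ or $B_n \neq 0$ for all $n \gg 0$; dualizing gives that either $A_n = 0$ for all $n \ll 0$ or $A_n \neq 0$ for all $n \ll 0$.

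Second, I would invoke \cite[3.2]{kh}, which says that each $H^i_{R_+}(M,N)_n$ is a finitely generated $R_0$-module. Since $(R_0, \m_0)$ is local, Nakayama's lemma yields that $H^i_{R_+}(M,N)_n = 0$ if and only if $A_n = H^i_{R_+}(M,N)_n / \m_0 H^i_{R_+}(M,N)_n = 0$. Combining this equivalence with the dichotomy for $A$ obtained above: if $A_n = 0$ for all $n \ll 0$ then $H^i_{R_+}(M,N)_n = 0$ for all $n \ll 0$, and if $A_n \neq 0$ for all $n \ll 0$ then $H^i_{R_+}(M,N)_n \neq 0$ for all $n \ll 0$. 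In either case $H^i_{R_+}(M, N)$ is tame, which is what we want.

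The argument is essentially formal once the preceding theorem is available; the one substantive ingredient is the tameness of graded Artinian modules over a standard graded algebra over a field, which is a standard fact (graded Matlis duality together with the eventual polynomiality of Hilbert functions). Consequently I do not anticipate a genuine obstacle here — this really is an immediate consequence, the only care needed being to observe that $A$ descends to a module over $\bar R$ and that Nakayama bridges the vanishing of $A_n$ with that of $H^i_{R_+}(M,N)_n$.
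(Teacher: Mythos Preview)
Your proposal is correct and follows the route the paper intends: specialize the preceding theorem to $\fa_0=0$, $\fb_0=\m_0$, obtain that $H^i_{R_+}(M,N)/\m_0 H^i_{R_+}(M,N)$ is Artinian, deduce its tameness, and lift via Nakayama using the finite generation of the components from \cite[3.2]{kh}. The only cosmetic difference is that where you invoke graded Matlis duality plus Hilbert polynomials to get tameness of a graded Artinian module, the paper would cite Kirby's result \cite{k} directly (as it does in the surrounding proofs); the two arguments are equivalent.
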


\begin{lem}
Let $\Gamma_{R_+}(N)= 0$  and  $s:= s_{R_+}(M, N)$. If  $\m\notin \Att_{R}(R_0/ \m_0\otimes_{R_0}H^s_{R_+}(M, N)) $, then there is an  $N$-regular element  $x\in R_1$ such that the multiplication map $H^s_{R_+}(M, N)\stackrel{.x}{\to} H^s_{R_+}(M, N)$ is surjective and that $s_{R_+}(M, N/ xN)\leq s- 1$.
\end{lem}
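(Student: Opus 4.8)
The plan is to produce the $N$-regular element $x\in R_1$ by a prime-avoidance argument and then verify the two asserted properties using the standard long exact sequence coming from multiplication by $x$. First I would reduce to a setting where such an element is plentiful: replacing $R_0$ by the faithfully flat extension $R_0[X]_{\m_0[X]}$ (which preserves the vanishing and non-vanishing of all the graded pieces involved, hence preserves $s$, $^*$minimaxness and attached primes of the relevant modules), I may assume the residue field $R_0/\m_0$ is infinite. Since $\Gamma_{R_+}(N)=0$, every element of $\Ass_R(N)$ avoids $R_+$, so $\bigcup_{\p\in\Ass_R(N)}(\p\cap R_1)$ is, by the infiniteness of the residue field and standard graded prime avoidance, a proper subset of $R_1$; thus there is an abundance of $N$-regular elements of degree $1$, and I have freedom to impose one further avoidance condition on $x$.

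The extra condition is exactly what the hypothesis $\m\notin\Att_R(R_0/\m_0\otimes_{R_0}H^s_{R_+}(M,N))$ is for. Writing $T:=R_0/\m_0\otimes_{R_0}H^s_{R_+}(M,N)$, by the previous theorem $T$ is an Artinian $R$-module, so it has a finite set of attached primes, and $\m\notin\Att_R(T)$ means $R_+\not\subseteq\bigcup_{\q\in\Att_R(T)}\q$ together with the graded structure forces $R_1\not\subseteq\bigcup_{\q\in\Att_R(T)}\q$; hence I can choose the $N$-regular $x\in R_1$ to also avoid every attached prime of $T$. For such an $x$, multiplication by $x$ on the Artinian module $T$ is surjective (an Artinian module is the direct sum of its $\q$-primary components over $\q\in\Att$, and on each component multiplication by an element outside $\q$ is surjective); equivalently $T/xT=0$, i.e. $x\bigl(R_0/\m_0\otimes_{R_0}H^s_{R_+}(M,N)\bigr)=R_0/\m_0\otimes_{R_0}H^s_{R_+}(M,N)$.

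Now I turn to the cohomology. Since $x$ is $N$-regular, the sequence $0\to N(-1)\xrightarrow{\cdot x}N\to N/xN\to 0$ yields, via \ref{seq}(iv), the long exact sequence, from a piece of which one extracts
$$H^s_{R_+}(M,N)/xH^s_{R_+}(M,N)\hookrightarrow H^s_{R_+}(M,N/xN)\twoheadrightarrow 0:_{H^{s+1}_{R_+}(M,N)}x\to 0,$$
together with the surjection $H^s_{R_+}(M,N)\xrightarrow{\cdot x}H^s_{R_+}(M,N)$ that I must establish. To get that surjection I would use Nakayama-type reasoning on the top graded piece: by \ref{gen}(i) the module $C:=H^s_{R_+}(M,N)/xH^s_{R_+}(M,N)$ is bounded above, and applying $R_0/\m_0\otimes_{R_0}-$ to the surjection $H^s_{R_+}(M,N)\to C$ shows $R_0/\m_0\otimes_{R_0}C$ is a quotient of $T/xT=0$, so $R_0/\m_0\otimes_{R_0}C=0$; since each $C_n$ is a finitely generated $R_0$-module (by \cite[3.2]{kh}) that is also $R_+$-torsion... — here is the delicate point: I need $C=0$, not just $\m_0C=0$, so I should instead argue that $C$ is a $^*$minimax $R_+$-torsion module which by \ref{art} is Artinian, hence $\m_0$-torsion after the reductions, whence $R_0/\m_0\otimes_{R_0}C=0$ forces $C=0$ by graded Nakayama for Artinian modules. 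This gives the surjectivity of $\cdot x$ on $H^s_{R_+}(M,N)$, and consequently $H^s_{R_+}(M,N)/xH^s_{R_+}(M,N)=0$, so the displayed exact sequence degenerates to $H^s_{R_+}(M,N/xN)\cong 0:_{H^{s+1}_{R_+}(M,N)}x$. The latter is a submodule of the $^*$minimax module $H^{s+1}_{R_+}(M,N)$, hence $^*$minimax by the submodule lemma; therefore $H^s_{R_+}(M,N/xN)$ is $^*$minimax, and since for $i>s$ the module $H^i_{R_+}(M,N/xN)$ is $^*$minimax already (by \ref{art}, using $s_{R_+}(M,N)=s$ and the long exact sequence to control $H^i_{R_+}(M,N/xN)$ between $H^i_{R_+}(M,N)$ and $H^{i+1}_{R_+}(M,N)$), we conclude $s_{R_+}(M,N/xN)\le s-1$.

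The main obstacle is the surjectivity of multiplication by $x$ on $H^s_{R_+}(M,N)$: passing from "$\cdot x$ is surjective on $R_0/\m_0\otimes_{R_0}H^s_{R_+}(M,N)$" to "$\cdot x$ is surjective on $H^s_{R_+}(M,N)$" is not formal and genuinely uses (a) the Artinianness of $R_0/\fb_0\otimes_{R_0}H^s_{R_+}(M,N)$ from the preceding theorem together with the choice of $x$ outside $\Att$, and (b) a graded Nakayama argument applied to the $^*$minimax (hence, after the reductions, Artinian and $\m_0$-torsion) cokernel $C$. Once this is in place, the rest is routine bookkeeping with the long exact sequence and the $^*$minimax submodule/quotient lemma.
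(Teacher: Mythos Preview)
Your approach is essentially the paper's: reduce to an infinite residue field, use graded prime avoidance to pick $x\in R_1$ that is $N$-regular and avoids all attached primes of the Artinian module $T=R_0/\m_0\otimes_{R_0}H^s_{R_+}(M,N)$, deduce $xT=T$, then use the long exact sequence on $0\to N(-1)\to N\to N/xN\to 0$ to finish.

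The one genuine muddle is at the surjectivity step, where you abandon a correct argument for a flawed one. Your first line of reasoning already works: from $R_0/\m_0\otimes_{R_0}C=0$ (i.e.\ $\m_0 C=C$, not ``$\m_0C=0$'') and the fact that each $C_n=H_n/xH_{n-1}$ is a finitely generated $R_0$-module (as a quotient of $H^s_{R_+}(M,N)_n$, which is finitely generated by \cite[3.2]{kh}), ordinary Nakayama over the local ring $R_0$ gives $C_n=0$ for every $n$, hence $C=0$ and $\cdot x$ is surjective on $H^s_{R_+}(M,N)$. This is exactly what the paper has in mind when it writes (tersely) ``$xH/\m_0H=H/\m_0H$, therefore $\cdot x$ is surjective''. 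By contrast, your fallback argument---asserting that $C$ is $^*$minimax so that \ref{art} applies---is unjustified: $H^s_{R_+}(M,N)$ is by definition of $s$ \emph{not} $^*$minimax, so there is no reason its quotient $C=H/xH$ should be, and in fact establishing $^*$minimaxness at level $s$ for $N/xN$ is the very goal. Two minor side remarks: an Artinian module is a (not necessarily direct) \emph{sum} of \emph{secondary} submodules, and it is this that gives $xT=T$ when $x$ avoids $\Att_R(T)$; and once $C=0$ you only get an embedding $H^s_{R_+}(M,N/xN)\hookrightarrow H^{s+1}_{R_+}(M,N)$ (via $0:_{H^{s+1}}x$), which is all you need.
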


\begin{proof}
Replacing $R_0$ with $R_0[X]_{\m_0[X]}$, we can restrict ourselves to the case where the residue field $R_0/ \m_0$  is infinite. Also, in view of the previous proposition, the set of attached prime ideals of $R_0/ \m_0\otimes_{R_0}H^s_{R_+}(M, N)$  is finite. Set \[\Omega:= (\Att_R(R_0/ \m_0\otimes_{R_0}H^s_{R_+}(M, N))\cup \Ass_R(N))\backslash \Var(R_+).\]
 Then $\Omega$ is a finite set of graded prime ideals of $R$, non of which contains  $R_+$. Therefore, using \cite[1.5.12]{bh}, there exists an element $x\in R_1$  such that  $x\notin \bigcup_{\p\in \Omega} \p$. Now, the long exact sequence
 $$H^i_{R_+}(M, N)(-1)\stackrel{.x}{\to} H^i_{R_+}(M, N)\to H^i_{R_+}(M, N/ xN)\to H^{i+ 1}_{R_+}(M, N)$$
 of graded $R$-modules
implies that $H^i_{R_+}(M, N/ xN)$  is $^*$minimax for all $i> s$. So, it remains to show that $H^s_{R_+}(M, N/ xN)$  is $^*$minimax.
For simplicity set  $H:= H^s_{R_+}(M, N)$. The fact that  $x\notin \bigcup_{\p\in \Att_R(R_0/ \m_0\otimes_{R_0}H)}\p$ implies that  $x H/ \m_0H= H/ \m_0 H$. Therefore, the multiplication map  $H\stackrel{.x}{\to} H$ is surjective and in view of the above exact sequence,  $H^s_{R_+}(M, N/ xN)$ is embedded in the $^*$minimax $R$-module  $H^{s+ 1}_{R_+}(M, N)$, and this completes the proof.
\end{proof}

\begin{thm}
Let  $s:= s_{R_+}(M, N)$. Then there is a numerical polynomial $P(x)\in \mathbb{Q}[x]$  of degree less than $s$, such that  $\length_{R_0}(H^{s}_{R_+}(M, N)_n/ \m_0 H^{s}_{R_+}(M, N)_n)= P(n)$ for all  $n\ll 0$.
\end{thm}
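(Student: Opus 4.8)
The strategy is to reduce to the Artinian $R$-module $R_0/\m_0 \otimes_{R_0} H^s_{R_+}(M,N)$ produced by the preceding theorem, and then to run an induction on $s = s_{R_+}(M,N)$ exactly parallel to the argument for part (i) of the earlier theorem on $\length_{R_0}(\Gamma_{\m_0}(H^i_{R_+}(M,N)_n))$. First I would invoke \cite{k}: since $R_0/\m_0 \otimes_{R_0} H^{s}_{R_+}(M,N)$ is Artinian, its $n$-th graded component $H^s_{R_+}(M,N)_n/\m_0 H^s_{R_+}(M,N)_n$ has finite length which agrees with a numerical polynomial $P(x) \in \mathbb{Q}[x]$ for all $n \ll 0$. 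So the only real content is the degree bound $\deg(P) < s$.

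For the degree bound I would set up the same reductions as in the earlier proof. By a faithfully flat base change $R_0 \rightsquigarrow R_0[X]_{\m_0[X]}$ we may assume $R_0/\m_0$ is infinite; by \ref{seq}(iii) we have $H^s_{R_+}(M,N)_n \cong H^s_{R_+}(M,N/\Gamma_{R_+}(N))_n$ for $n \ll 0$, so we may assume $\Gamma_{R_+}(N) = 0$ and hence pick $x \in R_1$ which is a non-zerodivisor on $N$ (using $R_0/\m_0$ infinite and \cite[1.5.12]{bh}). The base case $s = 0$ forces $\deg(P) < 0$, i.e. $H^0_{R_+}(M,N)_n/\m_0 H^0_{R_+}(M,N)_n = 0$ for $n \ll 0$: this holds because $H^0_{R_+}(M,N) = \Gamma_{R_+}(M,N)$ is finitely generated, so its graded pieces vanish for $n \ll 0$ anyway. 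For the inductive step, the exact sequence coming from $0 \to N(-1) \xrightarrow{\cdot x} N \to N/xN \to 0$, namely
$$H^{s-1}_{R_+}(M,N/xN)_n \xrightarrow{f_n} H^s_{R_+}(M,N)_{n-1} \xrightarrow{\cdot x} H^s_{R_+}(M,N)_n \to H^s_{R_+}(M,N/xN)_n,$$
shows first that $s_{R_+}(M,N/xN) \le s - 1$ (wait—this needs the converse inequality; in fact the sequence shows $t$-type behaviour, and one checks $s_{R_+}(M,N/xN) \le s$, which is what the inductive hypothesis needs applied at level $s$... actually $\le s-1$ follows from the $^*$minimax-ness of $H^{s+1}_{R_+}(M,N)$ together with the lemma just proved). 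Applying $R_0/\m_0 \otimes_{R_0} -$ to the pieces of this sequence and using that $\Tor_1^{R_0}(R_0/\m_0, -)$ of the relevant $^*$minimax $\fa$-torsion submodules is Artinian (Lemma \ref{art}), I would extract an inequality of the form
$$\length_{R_0}\!\big(H^s_{R_+}(M,N)_{n-1}/\m_0 H^s_{R_+}(M,N)_{n-1}\big) - \length_{R_0}\!\big(H^s_{R_+}(M,N)_{n}/\m_0 H^s_{R_+}(M,N)_{n}\big) \le \length_{R_0}\!\big(H^{s-1}_{R_+}(M,N/xN)_n/\m_0(\cdots)_n\big)$$
for all $n \ll 0$, modulo an Artinian (hence bounded, hence eventually-polynomial-of-smaller-degree) error coming from the $\Tor_1$ and from $H^{s-1}_{R_+}(M,N)_n$ being Artinian for $n \ll 0$. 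By the inductive hypothesis the right-hand side is eventually a polynomial of degree $< s-1$, so the first difference of $P$ has degree $< s - 1$, giving $\deg P < s$.

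The main obstacle, as in the companion theorem, is the bookkeeping at the boundary index $i = s$: one must argue that $H^{s-1}_{R_+}(M,N)$ is $^*$minimax (true by definition of $s = s_{R_+}(M,N)$ since $s-1 < s$... but careful: $s$ is a \emph{sup}, so $H^{s-1}_{R_+}(M,N)$ need \emph{not} be $^*$minimax unless $s-1 < t_{R_+}(M,N)$ or unless one instead uses that its graded pieces $H^{s-1}_{R_+}(M,N)_n$ are finitely generated $R_0$-modules for each $n$, which \emph{is} always true by \cite[3.2]{kh}, and in particular are Artinian for $n \ll 0$ by the vanishing/support considerations already in the paper). That is exactly the point where I would import the previous lemma of this section — the one producing the $N$-regular $x \in R_1$ with $H^s_{R_+}(M,N) \xrightarrow{\cdot x} H^s_{R_+}(M,N)$ surjective and $s_{R_+}(M,N/xN) \le s-1$ — provided the hypothesis $\m \notin \Att_R(R_0/\m_0 \otimes_{R_0} H^s_{R_+}(M,N))$ holds; handling the case where that hypothesis fails (i.e. $\m$ \emph{is} attached, so the multiplication by $x$ is not surjective and the cokernel is a nonzero Artinian module whose lengths grow polynomially of degree exactly $s$ — ruled out, so in that case the whole module $R_0/\m_0 \otimes H^s$ must be such that $P$ has degree $< s$ for a different reason, or one shows this case cannot arise) is the delicate part, and I would dispatch it by the surjectivity of $\cdot x$ on $H^s_{R_+}(M,N)$ forcing $H^s_{R_+}(M,N)_n/\m_0 H^s_{R_+}(M,N)_n$ to surject onto $H^s_{R_+}(M,N)_{n-1}/\m_0(\cdots)$, so $P(n) \ge P(n-1)$ eventually \emph{and} the reverse from the four-term sequence, pinning the degree down.
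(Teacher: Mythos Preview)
Your overall architecture is right --- existence of $P$ via Kirby, reduction to $\Gamma_{R_+}(N)=0$, then induction on $s$ using a degree-one non-zerodivisor --- and it matches the paper's. But there is a genuine gap at the point you yourself flag as ``the delicate part'': the case $\m\in\Att_R\bigl(R_0/\m_0\otimes_{R_0}H^s_{R_+}(M,N)\bigr)$. Your treatment of this case is not a proof. You cannot simply pick $x\in R_1$ a non-zerodivisor on $N$ and hope that $s_{R_+}(M,N/xN)\le s-1$; that inequality (and the surjectivity of $\cdot x$ on $H^s_{R_+}(M,N)$, which is what makes the right-exact sequence terminate in $0$ and hence gives a clean first-difference bound) requires $x$ to avoid the attached primes of $R_0/\m_0\otimes H^s_{R_+}(M,N)$, and that is only possible once $\m$ has been removed from the attached set. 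Your attempted workaround (``the cokernel is a nonzero Artinian module whose lengths grow polynomially of degree exactly $s$ --- ruled out'') assumes what is to be proved, and the inequality ``$P(n)\ge P(n-1)$'' you write at the end has the wrong direction (surjectivity of $H^s_{n-1}\to H^s_n$ gives $P(n-1)\ge P(n)$).

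The paper's device for this step is short and you should adopt it: take a minimal graded secondary representation $R_0/\m_0\otimes_{R_0}H^s_{R_+}(M,N)=S^1+\cdots+S^k$. If $\m=\sqrt{0:S^k}$, then $S^k$ is annihilated by $\m_0$ (automatically) and by some power of $R_+$, hence is a graded Artinian module over the Artinian graded ring $R/(\m_0R+R_+^t)$; therefore $S^k$ is concentrated in finitely many degrees and contributes nothing to $P(n)$ for $n\ll 0$. This lets you assume $\m\notin\Att_R(R_0/\m_0\otimes H^s)$, invoke the preceding lemma to choose $x\in R_1$ with $\cdot x$ surjective on $H^s_{R_+}(M,N)$ and $s_{R_+}(M,N/xN)\le s-1$, and then your first-difference argument goes through cleanly: tensoring the short exact sequence $H^{s-1}_{R_+}(M,N/xN)_n\to H^s_{R_+}(M,N)_{n-1}\to H^s_{R_+}(M,N)_n\to 0$ with $R_0/\m_0$ (right exact, no $\Tor$ terms needed) gives $P(n-1)-P(n)\le \length\bigl(H^{s-1}_{R_+}(M,N/xN)_n/\m_0(\cdots)\bigr)$, and induction finishes.
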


\begin{proof}
Since  $H^{s}_{R_+}(M, N)/ \m_0 H^{s}_{R_+}(M, N)$ is Artinian, the numerical polynomial  $P(x)\in \mathbb{Q}[x]$ exists by \cite{k}. It suffices to show that $P(x)$ is of degree less than  $s$. To this end, use the exact sequence
$0\to \Gamma_{R_+}(N)\to N\to N/ \Gamma_{R_+}(N)\to 0$, in conjunction with \ref{seq}(iii),
 to get the long exact sequence
$$\Ext^s_R(M, \Gamma_{R_+}(N))\to H^{s}_{R_+}(M, N)\to H^{s}_{R_+}(M, N/ \Gamma_{R_+}(N))\to \Ext^{s+ 1}_R(M, \Gamma_{R_+}(N)).$$
As  $\Ext^i_R(M, \Gamma_{R_+}(N))$ is finitely generated for all  $i\in\N_0$, it follows that
 $s_{R_+}(M, N/ \Gamma_{R_+}(N))= s$ and that  $H^{s}_{R_+}(M, N)_n\cong H^{s}_{R_+}(M, N/ \Gamma_{R_+}(N))_n$ for all  $n\ll 0$. Therefore, it suffices to consider the case where $\Gamma_{R_+}(N)= 0$.

Let  $H^{s}_{R_+}(M, N)/ \m_0 H^{s}_{R_+}(M, N)= S^1+\cdots +S^k$ be a minimal graded secondary representation with  $\p_j= \sqrt{0:_R S^j}$ for all
$j=1,\cdots, k$. Assume that  $\m= \p_k$. So, $S^k$ is concentrated in finitely many degrees. Hence $P(n)= \length_{R_0}(H^{s}_{R_+}(M, N)_n/ \m_0 H^{s}_{R_+}(M, N)_n)= \length_{R_0}(S^1_n+\cdots +S^{k- 1}_n)$ for all  $n\ll 0$.
This allows us to assume that  $\m\notin \Att_R(\frac{H^{s}_{R_+}(M, N)}{ \m_0 H^{s}_{R_+}(M, N)})$. Therefore, on use of previous lemma, there exists an  $N$-regular element  $x\in R_1$ such that  $s_{R_+}(M, N/ xN)\leq s- 1$. Also, the exact sequence
$$H^{s- 1}_{R_+}(M, N/ xN)_n\to H^{s}_{R_+}(M, N)_{n- 1}\stackrel{.x}{\to} H^{s}_{R_+}(M, N)_n\to 0$$
yields the exact sequence
$$\frac{H^{s- 1}_{R_+}(M, \frac{N}{xN})_n}{\m_0 H^{s- 1}_{R_+}(M, \frac{N}{xN})_n}\to \frac{H^{s}_{R_+}(M, N)_{n- 1}}{ \m_0 H^{s}_{R_+}(M, N)_{n- 1}}\to \frac{H^{s}_{R_+}(M, N)_{n}}{ \m_0 H^{s}_{R_+}(M, N)_{n}}\to 0$$
for all $n\ll 0$.
This allows to conclude by induction on  $s$.

\end{proof}

\end{document}